\definecolor{Red}{rgb}{1,0,0}
\definecolor{Blue}{rgb}{0,0,1}
\definecolor{Olive}{rgb}{0.41,0.55,0.13}
\definecolor{Yarok}{rgb}{0,0.5,0}
\definecolor{Green}{rgb}{0,1,0}
\definecolor{MGreen}{rgb}{0,0.8,0}
\definecolor{DGreen}{rgb}{0,0.55,0}
\definecolor{Yellow}{rgb}{1,1,0}
\definecolor{Cyan}{rgb}{0,1,1}
\definecolor{Magenta}{rgb}{1,0,1}
\definecolor{Orange}{rgb}{1,.5,0}
\definecolor{Violet}{rgb}{.5,0,.5}
\definecolor{Purple}{rgb}{.75,0,.25}
\definecolor{Brown}{rgb}{.75,.5,.25}
\definecolor{Grey}{rgb}{.5,.5,.5}
\newcommand{\bs}{\boldsymbol{\sigma}}
\newcommand{\ind}{\mathbbm{1}}
\def\OPT{{\mathsf{H^*}}}
\newcommand{\R}{\mathbb{R}}
\newcommand{\N}{\mathbb{N}}
\newcommand{\ip}[2]{\langle{#1},{#2}\rangle} 
\renewcommand{\ip}[2]{\left\langle#1,#2\right\rangle}
\renewcommand{\R}{\mathbb{R}}
\newcommand{\distr}{\stackrel{d}{=}}
\newcommand{\A}{\mathcal{A}}
\newcommand{\bincube}{\mathcal{B}_n}
\newcommand{\cN}{{\bf \mathcal{N}}}
\newcommand{\ignore}[1]{\relax}
\newtheorem{theorem}{Theorem}[section]
\newtheorem{remark}[theorem]{Remark}
\newtheorem{lemma}[theorem]{Lemma}
\newtheorem{proposition}[theorem]{Proposition}
\newtheorem{definition}[theorem]{Definition}
\newtheorem{Corollary}[theorem]{Corollary}
\renewcommand{\ip}[2]{\left\langle#1,#2\right\rangle}
\newcounter{parentnumber}
\def\BState{\State\hskip-\ALG@thistlm}
\definecolor{Red}{rgb}{1,0,0}
\definecolor{Blue}{rgb}{0,0,1}
\definecolor{Olive}{rgb}{0.41,0.55,0.13}
\definecolor{Green}{rgb}{0,1,0}
\definecolor{MGreen}{rgb}{0,0.8,0}
\definecolor{DGreen}{rgb}{0,0.55,0}
\definecolor{Yellow}{rgb}{1,1,0}
\definecolor{Cyan}{rgb}{0,1,1}
\definecolor{Magenta}{rgb}{1,0,1}
\definecolor{Orange}{rgb}{1,.5,0}
\definecolor{Violet}{rgb}{.5,0,.5}
\definecolor{Purple}{rgb}{.75,0,.25}
\definecolor{Brown}{rgb}{.75,.5,.25}
\definecolor{Grey}{rgb}{.5,.5,.5}
\definecolor{Pink}{rgb}{1,0,1}
\definecolor{DBrown}{rgb}{.5,.34,.16}
\definecolor{Black}{rgb}{0,0,0}
\DeclareMathOperator*{\argmax}{arg\,max}
\title{Shattering in the Ising Pure $p$-Spin Model
}
\author{{\sf David Gamarnik}\thanks{Sloan School of Management, Massachusetts Institute of Technology; e-mail: {\tt gamarnik@mit.edu}.} 
\and
{\sf Aukosh Jagannath}\thanks{Department of Statistics and Actuarial Science, Department of Applied Mathematics, University of Waterloo; e-mail: {\tt a.jagannath@uwaterloo.ca}.}
\and
{\sf Eren C. K{\i}z{\i}lda\u{g}}\thanks{Department of Statistics, Columbia University; e-mail: {\tt eck2170@columbia.edu}.}
}
\begin{document}
\maketitle
\begin{abstract}
We study the Ising pure $p$-spin model for large $p$. 
We investigate the landscape of the Hamiltonian of this model. We show that for any $\gamma>0$ and any large enough $p$, the model exhibits an intricate geometrical property known as the multi Overlap Gap Property above the energy value $\gamma\sqrt{2\ln 2}$. 
We then show that for any inverse temperature $\sqrt{\ln 2}<\beta<\sqrt{2\ln 2}$ and any large $p$, the model exhibits \emph{shattering}: w.h.p.\,as $n\to\infty$, there exists exponentially many well-separated clusters such that (a) each cluster has exponentially small Gibbs mass, and (b) the clusters collectively contain all but a vanishing fraction of Gibbs mass. Moreover, these clusters consist of configurations with energy near $\beta$. Range of temperatures for which shattering occurs is within the \emph{replica symmetric} region.
To the best of our knowledge, this is the first shattering result regarding the Ising $p$-spin models. Our proof is elementary, and in particular based on simple applications of the first and the second moment methods.
\end{abstract}

\section{Introduction}
We prove the existence of a shattering phase for the Ising $p$-spin model at the level of the Gibbs measure for large enough $p$. In particular, we show that for all $\sqrt{\ln 2}< \beta<\sqrt{2\ln 2}$, the Gibbs measure at inverse temperature $\beta$ is shattered for the $p$-spin model for $p$ larger than some absolute constant. This regime of (inverse) temperatures was shown  to be in the replica symmetric phase for the Ising $p$-spin models for large $p$ by Talagrand \cite{talagrand2000rigorous}; in particular, the value $\sqrt{2\ln 2}$ is the replica symmetry breaking transition for the formal $p\to\infty$ limit of these models, namely Derrida’s Random Energy Model~\cite{derrida1980random,derrida1981random}. Along the way, we prove results regarding the landscape geometry of these models; we show in particular that they satisfy a certain type of clustering, and a version of the Overlap Gap Property, which is known to imply algorithmic hardness. 

The study of the free energy landscape and the geometry of mean-field spin glass models has a rich history in the physics literature. The $p$-spin model we consider here was introduced by Derrida in \cite{derrida1980random} as a generalization of the Sherrington-Kirkpatrick model \cite{sherrington1975solvable} (the case $p=2$) that allows for $p$-body interactions. More formally the $p$-spin Hamiltonian is given by 
\begin{equation}\label{eq:p-spin-Hamiltonian}
    H_{n,p}(\bs) = n^{-\frac{p+1}{2}} \ip{\boldsymbol{J}}{\bs^{\otimes p}} = n^{-\frac{p+1}{2}} \sum_{1\le i_1,\dots,i_p\le n}J_{i_1,\dots,i_p}\bs_{i_1}\cdots \bs_{i_p},
\end{equation}
where $p\ge 2$ is a fixed integer and $\boldsymbol{J}=(J_{i_1,\dots,i_p}:1\le i_1,\dots,i_p\le n)\in(\R^n)^{\otimes p}$ is an order-$p$ tensor whose entries are i.i.d.\,standard normal, $J_{i_1,\dots,i_p}\distr \cN(0,1)$.\footnote{It is also common to use the normalization $n^{-(p-1)/2}$ in the literature. 
} We focus here on the case that the configuration space is the discrete hypercube, $\bs\in\Sigma_n\triangleq\{-1,1\}^n$. The setting where the configuration space is the hypersphere of radius $\sqrt{n}$ is also of great interest. The former is sometimes called the \emph{Ising spin} models and the latter is called the \emph{spherical} models. In this work, we focus exclusively on the Ising spin models. 

When studying spin systems, two quantities play an essential role, namely the free energy
\begin{equation}\label{eq:limit-FE}
    F(\beta)\triangleq \lim_{n\to\infty}\frac{\ln Z_\beta}{n},\quad\text{where}\quad Z_\beta = \sum_{\bs\in\Sigma_n}e^{\beta n H_{n,p}(\bs)},
\end{equation}
and the Gibbs measure
\begin{equation}\label{eq:Gibbs-measure}
\mu_\beta(A) = \frac{1}{Z_\beta}\sum_{\bs \in A} e^{\beta n H_{n,p}(\bs)},
\end{equation}
where $\beta>0$ is the \emph{inverse temperature}.
Computing the free energy of mean-field spin glass models and the corresponding landscape has a large body of literature in the physics community for which we have no hope here of providing a complete summary. We instead point the reader to  textbook introductions~\cite{talagrand2010mean,panchenko2013sherrington}.   

In the mathematics literature, our understanding of the low temperature, or replica symmetry breaking, phase is by now fairly complete. In particular, the free energy was first computed for $p$ even by Talagrand in \cite{talagrand2006parisi} and for general $p$ by Panchenko in~\cite{panchenko2014parisi}, following the important works of Guerra \cite{guerra2003broken} and Aizenman-Sims-Star \cite{aizenman2003extended}, respectively. Far more is known, such as the construction of asymptotic Gibbs measures \cite{arguin2008remark}, ultrametricity of the asymptotic Gibbs measure  \cite{panchenko2013parisi}, the TAP equations \cite{auffinger2019thouless}, and generalizations of the TAP free energy \cite{subag2018free,chen2023generalized}. The phase diagram has also received a tremendous amount of attention \cite{toninelli2002almeida,auffinger2015properties,jagannath2017some}. In particular, it is known \cite{jagannath2017some} that the so-called de Almeida-Thouless line is the correct phase boundary for the SK model (up to a compact set away from the critical external field) but not for the $p$-spin models. Indeed the phase diagram for the $p$-spin models is expected to be particularly rich and to exhibit many hallmarks of spin glass behaviour.

One of the central predictions regarding the phase diagram of mean-field spin glasses is the existence of the \emph{shattering phase} (see Definition~\ref{def:shattering} below for a precise definition). The shattering phase was introduced by Kirkpatrick and Thirumalai in their landmark work \cite{kirkpatrick1987p}, where they predicted that it appears in Ising $p$-spin models. This notion has since played an important role in our understanding of spin glasses and, in particular, its connection to algorithmic hardness. Indeed, the important related problem of shattering at zero temperature (i.e., shattering of the energy landscape) has now been shown for constraint satisfaction problems such as \cite{achlioptas2006solution,achlioptas2011solution,krzakala2007gibbs,achlioptas2008algorithmic,sly2016reconstruction}.

In the mathematics literature, the existence of the shattering phase in a spin glass model was first proved in \cite{arous2021shattering} for the spherical $p$-spin model via the TAP complexity approach building on the important work of Subag \cite{subag2017extremal} and Auffinger-Ben Arous-\v{C}erny \cite{auffinger2013random}.
For the Ising $p$-spin model, however, to our knowledge this question has remained open since the original work of Kirkpatrick and Thirumalai in 1987.  In this paper we demonstrate the existence of this phase in the Ising setting for large $p$.

The approach we take here is via an observation that goes back to the orignal work of Derrida on the Random Energy Model \cite{derrida1980random,derrida1981random}, namely that the $p$-spin model for large $p$ is well approximated by the Random Energy Model. Indeed, using this perspective, we are able to provide an elementary second moment method approach to prove the existence of the shattering phase by taking $p$ to be large enough. Along the way, we prove new results regarding concentration of sub-level sets of the Hamiltonian and clustering, control the ground state energy, and also demonstrate $m$-Overlap Gap Property.

It is important here to compare our work with the earlier work of Talagrand \cite{talagrand2000rigorous} (see also~\cite[Chapter~16]{talagrand2011advanced}). To describe his results, let
\begin{equation}\label{eq:RSB}
    \beta_p\triangleq \sup\left\{\beta:\limsup_{n\to\infty}\frac{\mathbb{E}[\ln Z_\beta]}{n} = \ln 2 + \frac{\beta^2}{2}\right\}.
\end{equation}
The value $\ln 2 + \beta^2/2$ corresponds to the \emph{annealed free energy}, $\lim_{n\to\infty}(\ln\mathbb{E}[Z_\beta])/n$. In particular, the model is replica symmetric for $\beta<\beta_p$. Talagrand's first main result~\cite[Theorem~1.1]{talagrand2000rigorous}  shows that $\beta_p$ is asymptotically $\sqrt{2 \ln 2}$: for any $p$, $(1-2^{-p})\sqrt{2\ln 2}\le \beta_p\le \sqrt{2\ln 2}$. His next main result~\cite[Theorem~1.4]{talagrand2000rigorous} (see also~\cite[Theorems~16.3.6 and 16.4.1]{talagrand2011advanced}) shows that for all sufficiently large $p$ and any $\beta>0$ the Gibbs measure decomposes into \emph{lumps} $(\mathcal{C}_\alpha)_{\alpha\ge 1}$ such that (a) the lumps collectively contain all but a vanishing fraction of Gibbs mass, and (b) the overlap between two configurations in the same lump is close to 1 whereas the overlap between configurations belonging to different lumps is near zero. While these lumps are well-separated, this however does not quite correspond to shattering. It is not clear whether the number of lumps is exponential or whether each lump is sub-dominant.

{\bf A Concurrent Work.} We end here by noting a very recent concurrent work by El Alaoui, Montanari and Sellke~\cite{ams2023}. They show that for $p$  sufficiently large, the spherical pure $p$-spin model in fact exhibits shattering for a range of temperatures within the replica symmetric regime. The notion of shattering they consider is similar to ours, they establish the presence of a shattering phase at the level of the Gibbs measure\footnote{The prior work~\cite{arous2021shattering} establishes that the `free energy landscape' is shattered and the TAP free energy formula. On the other hand, the result of~\cite{arous2021shattering} holds for all $p\ge 4$, whereas that of~\cite{ams2023} as well as ours hold for large enough $p$.}. 
Their argument is based on estimates concerning the (derivative of) Franz-Parisi potential; it in particular requires studying a Parisi measure. 

\subsection{Algorithmically Finding a Near Ground-State} A fundamental quantity regarding the $p$-spin model is the \emph{ground-state energy}:
\begin{equation}\label{eq:Ground-En}
    \OPT \triangleq \lim_{n\to\infty}\max_{\bs\in\Sigma_n}H_{n,p}(\bs)
\end{equation}
The ground-state value~\eqref{eq:Ground-En} can be recovered as the zero temperature limit of the Parisi formula: $\OPT = \lim_{\beta\to\infty}F(\beta)/\beta$. Equipped with the ground-state value, a natural algorithmic question is finding a near ground-state efficiently (i.e., in polynomial time). That is, given a $\boldsymbol{J}\in(\R^n)^{\otimes p}$ and an $\epsilon>0$, the algorithmic task is to efficiently find a $\bs_{\rm ALG}\in\Sigma_n$ such that
$H_{n,p}(\bs_{\rm ALG}) \ge (1-\epsilon)\OPT$ say w.h.p.\footnote{To be more precise, the algorithm $\A$ receives tensors $\boldsymbol{J}_n\in(\R^n)^{\otimes p}$ and outputs a sequence $\bs_n = \A(\boldsymbol{J}_n)\in\Sigma_n$.} For the SK model ($p=2$), Montanari~\cite{montanari2019FOCS} devised an Approximate Message Passing (AMP) type algorithm which, for any $\epsilon>0$, finds a $\bs_{\rm ALG}\in\Sigma_n$ such that $H_{n,p}(\bs_{\rm ALG})\ge (1-\epsilon)\OPT$ w.h.p. His algorithm is based on an unproven (though widely believed) assumption that the underlying model does not exhibit the \emph{Overlap Gap Property} (OGP). At a high level, the OGP asserts that a certain `cluster' of near ground-state configurations is `forbidden', i.e.\,they do not occur w.h.p. See Section~\ref{sec:ogp-background} for details. Montanari's algorithm was inspired by an algorithm of Subag~\cite{subag2021following} regarding the  spherical mixed $p$-spin model and was subsequently extended to Ising mixed $p$-spin models~\cite{el2021optimization,sellke2021optimizing}; these algorithms also find (w.h.p.) a $\bs_{\rm ALG}\in\Sigma_n$ with $\widetilde{H}_{n}(\bs_{\rm ALG})\ge (1-\epsilon)\OPT$ (where $\widetilde{H}_{n}$ is the Hamiltonian of underlying mixed $p$-spin model) for any $\epsilon>0$, provided that the underlying model does not exhibit the OGP. For $p\ge 4$ though, the pure $p$-spin model is known to exhibit the OGP~\cite[Theorem~3]{chen2019suboptimality}, which was shown to be a rigorous barrier for AMP type algorithms~\cite{gamarnikjagannath2021overlap} (conditionally on a conjecture that has since been proved by Sellke~\cite{sellke2021approximate}). More concretely,~\cite{gamarnikjagannath2021overlap} showed that for any even $p\ge 4$, there exists a value $\bar{\mu}$ such that the value $H_{n,p}(\bs_{\rm ALG})$ returned by any AMP type algorithm is strictly below $\OPT-\bar{\mu}$ w.h.p. Subsequent work extended this hardness result to low-degree polynomials~\cite{gamarnik2020low} and to Boolean circuits of low-depth~\cite{gamarnik2021circuit}. The lower bounds established in~\cite{gamarnikjagannath2021overlap,gamarnik2020low,gamarnik2021circuit} are however not tight: they do not quite match the best known algorithmic threshold. More recently, Huang and Sellke~\cite{huang2021tight,huang2023algorithmic} invented a very sophisticated version of the OGP dubbed as the \emph{branching OGP} and subsequently established tight hardness guarantees against Lipschitz algorithms for the $p$-spin model. 

As mentioned earlier, the classical OGP established in~\cite{chen2019suboptimality} fails to yield tight lower bounds, see~\cite{gamarnikjagannath2021overlap,gamarnik2020low,gamarnik2021circuit}. In order to circumvent this issue and obtain tight lower bounds, one needs to rely on the branching OGP~\cite{huang2021tight,huang2023algorithmic}, which is a very intricate constellation of near ground-state solutions consisting of an ultrametric tree. Additionally, proofs of these OGP results are quite involved and require sophisticated technical tools. One of our main results, Theorem~\ref{thm:m-ogp}, shows that in the case of large $p$, one can consider a much simpler structure instead and establish its absence using an elementary argument based on the \emph{first moment method}. This structure is known as the symmetric $m$-OGP (see Section~\ref{sec:ogp-background}) and Theorem~\ref{thm:m-ogp} shows the following: for any $m\in\N$ and any $\gamma>1/\sqrt{m}$, there exists a $P_m\in \N$ such that for any fixed $p\ge P_m$, the set of $\bs$ with $H_{n,p}(\bs)\ge\gamma\sqrt{2\ln 2}$ exhibits symmetric $m$-OGP w.h.p.\,as $n\to\infty$. Furthermore, the symmetric $m$-OGP is also a barrier for stable algorithms: for any such $\gamma>0$ and $p\ge P_m$, there do not exist a sufficiently stable algorithm finding (w.h.p.\,as $n\to\infty$) a $\bs_{\rm ALG}$ with $H_{n,p}(\bs_{\rm ALG}) \ge \gamma\sqrt{2\ln 2}$. See the paragraph on algorithmic lower bounds on Section~\ref{sec:m-ogp} for details. Note that for large $m$, the onset of this property approaches zero, therefore coinciding with the algorithmic threshold for the REM~\cite{addario2020algorithmic}, see the remark following Theorem~\ref{thm:m-ogp}. 
\subsection{Background on Overlap Gap Property and Algorithmic Barriers}\label{sec:ogp-background}
Finding a near ground-state for the $p$-spin model is an example of a random optimization problem with a \emph{statistical-to-computational gap} (\texttt{SCG}): the best known efficient algorithm performs strictly worse than the existential guarantee. In the context of $p$-spin models, this means no efficient algorithm that finds (w.h.p.) a solution $\bs$ with $H(\bs)$ arbitrarily close to $\OPT$ is known. Other random optimization problems with an \texttt{SCG} include optimization over random graphs~\cite{gamarnik2014limits,gamarnik2017,rahman2017local}, random constraint satisfaction problems (CSPs)~\cite{gamarnik2017performance,bresler2021algorithmic}, and perceptron models~\cite{gamarnik2022algorithms,gamarnik2023geometric}. While the standard complexity theory is often useless in the random setting\footnote{See~\cite{ajtai1996generating,boix2021average,GK-SK-AAP} for a few exceptions. In particular,~\cite{GK-SK-AAP} establishes the average-case hardness of the algorithmic problem of \emph{exactly} computing $Z_\beta$ in~\eqref{eq:limit-FE} under the standard assumption $P\ne \#P$.}, an active line of research proposed various frameworks for giving `rigorous evidence' of hardness. We do not review these frameworks here, and refer the interested reader to surveys~\cite{kunisky2022notes,gamarnik2021overlap,gamarnik2022disordered}. One such framework in fact emerges from the study of spin glass models and is based on the intricate geometry of the space of near-optimal solutions. 
\paragraph{Overlap Gap Property (OGP)} For certain random CSPs, the works~\cite{mezard2005clustering,achlioptas2006solution,achlioptas2008algorithmic} discovered an intriguing connection between the solution space geometry and algorithmic hardness (though without formally ruling out any class of algorithms): the onset of \emph{shattering} (in the sense of above) roughly coincides with the point above which known efficient algorithms break down. The first rigorous link between the solution space geometry and formal algorithmic hardness is formed through the OGP framework introduced by Gamarnik and Sudan~\cite{gamarnik2014limits}. This framework leverages insights from statistical physics; at a high level it asserts (w.h.p.) the non-existence of a certain cluster of near-optimal solutions, which we refer to as a `forbidden structure'. For the $p$-spin model, the classical OGP~\cite{chen2019suboptimality,gamarnikjagannath2021overlap,gamarnik2020low} asserts that the region of overlaps  for any two near-optima $\bs,\bs'\in\Sigma_n$ is disconnected: there exists $0<\nu_1<\nu_2<1$ such that $n^{-1}\ip{\bs}{\bs'}\in[0,\nu_1]\cup [\nu_2,1]$. Since then the OGP framework was instrumental in establishing lower bounds for many other average-case models, including random CSPs~\cite{gamarnik2017performance,bresler2021algorithmic}, optimization over random graphs~\cite{gamarnik2017,gamarnik2020low,wein2020optimal}, number partitioning problem~\cite{gamarnik2021algorithmic}, symmetric Ising perceptron~\cite{gamarnik2022algorithms,gamarnik2023geometric}, and the $p$-spin model~\cite{gamarnikjagannath2021overlap,gamarnik2020low,gamarnik2021circuit,huang2021tight,huang2023algorithmic}. See also~\cite{gamarnik2021overlap} for a survey on OGP. Several of these subsequent works leveraged more involved forbidden structures, see below.
\paragraph{Multi OGP} The work~\cite{gamarnik2014limits} introducing the OGP framework considers the problem of finding a large independent set in sparse random graphs on $n$ vertices with average degree $d$. While the largest independent set in this model\footnote{In the double limit, $n\to\infty$ followed by $d\to\infty$.} is asymptotically of size $2\frac{\log d}{d}n$~\cite{frieze1992independence,bayati2010combinatorial}, the best known efficient algorithm finds an independent set of size only $\frac{\log d}{d}n$. This \texttt{SCG} was addressed in~\cite{gamarnik2014limits,gamarnik2017}; they showed that any pair of independent sets of size larger than $(1+1/\sqrt{2})\frac{\log d}{d}n$ exhibits the OGP, and subsequently, local algorithms fail to find a large independent set of size above $(1+1/\sqrt{2})\frac{\log d}{d}n$. The extra $1/\sqrt{2}$ factor was removed by Rahman and Vir{\'a}g~\cite{rahman2017local}, who showed the presence of the OGP for a forbidden structure involving many independent sets all the way down to the algorithmic $\frac{\log d}{d}n$ threshold. We refer this approach as the multi OGP ($m$-OGP). This approach is useful in obtaining (nearly) tight algorithmic lower bounds for various other average-case models. Of particular interest to us is the \emph{symmetric $m$-OGP} which asserts the non-existence of $m$-tuples of  near-optimal solutions whose pairwise overlaps are approximately the same. This version of OGP was introduced in~\cite{gamarnik2017performance} to obtain nearly tight lower bounds against the class of sequential local algorithms for the random Not-All-Equal $k$-SAT model. Subsequent works leveraged similar symmetric $m$-OGP to establish nearly tight lower bounds against stable algorithms for the symmetric Ising perceptron~\cite{gamarnik2022algorithms} and to establish (non-tight) lower bounds far above the existential value for the random number partitioning problem~\cite{gamarnik2021algorithmic}.

Recently, more sophisticated and asymmetric versions of $m$-OGP were proposed. These versions involve more intricate forbidden patterns (where the $i{\rm th}$ solution has `intermediate' overlap with the first $i-1$ solutions for $2\le i\le m$); they were crucial in establishing tight lower bounds against low-degree polynomials for random graphs~\cite{wein2020optimal} and the random $k$-SAT~\cite{huang2021tight}. For the $p$-spin model, classical OGP regarding pairs (described above) fails to establish tight hardness guarantees~\cite{gamarnikjagannath2021overlap,gamarnik2020low}. Huang and Sellke circumvented this issue by introducing a very clever version of the OGP consisting of an ultrametric tree of solutions~\cite{huang2021tight,huang2023algorithmic}. Dubbed as the \emph{branching OGP}, this framework yielded tight lower bounds against the class of Lipschitz algorithms, see~\cite{huang2021tight} for the description of this class. In this paper, we show that when $p$ is large, one can in fact establish the presence of the much simpler symmetric $m$-OGP.
\paragraph{Ensemble OGP} An idea emerged in~\cite{chen2019suboptimality} is to consider pairs that are near-optimal with respect to correlated instances. For the $p$-spin model, this corresponds to considering $\bs_1,\bs_2\in\Sigma_n$ such that $H_i(\bs_i) = n^{-\frac{p+1}{2}}\langle\boldsymbol{J}^{(i)},\bs_i^{\otimes p}\rangle\ge E$ for some energy proxy $E$, where $\boldsymbol{J}^{(1)},\boldsymbol{J}^{(2)}\in(\R^n)^{\otimes p}$ are correlated copies.  Dubbed as the \emph{ensemble OGP}, this property combined with $m$-OGP (hence ensemble $m$-OGP) is proven quite powerful in ruling out virtually any stable algorithm~\cite{gamarnik2020low,gamarnikjagannath2021overlap,wein2020optimal,gamarnik2021algorithmic,gamarnik2022algorithms,bresler2021algorithmic,huang2021tight,huang2023algorithmic}. Our focus in the present paper is also on the ensemble version of the symmetric $m$-OGP described above.
\paragraph{Paper Organization} The rest of the paper is organized as follows. We provide all of our main results in Section~\ref{sec:main-results}. In particular, see Section~\ref{sec:concentration-number-of-sol} for a certain concentration result regarding the number of solutions and a corollary regarding the ground-state value for large $p$;  Section~\ref{sec:clustering} for a clustering result in the landscape of the Hamiltonian; 
Section~\ref{sec:shattering} for our shattering result; and Section~\ref{sec:m-ogp} for our $m$-OGP result. 
We provide complete proofs of our results in Section~\ref{sec:pfs}.
\paragraph{Notation} We close this section with a brief list of notation. For any set $A$, denote its cardinality by $|A|$. Given any event $E$, denote its indicator by $\ind\{E\}$. For $n\in\N$, $\Sigma_n$ denotes the discrete cube $\{-1,1\}^n$. For any $\bs,\bs'\in\Sigma_n$, $d_H(\bs,\bs')$ denotes their Hamming distance: $d_H(\bs,\bs')=\sum_{1\le i\le n}\ind\{\bs(i)\ne \bs'(i)\}$. Given any $\mathcal{C},\mathcal{C}'\subset \Sigma_n$, ${\rm dist}(\mathcal{C},\mathcal{C}')$ denotes their distance, $\min_{\bs\in\mathcal{C},\bs'\in\mathcal{C}'}d_H(\bs,\bs')$. For any $v,v'\in \R$, $\ip{v}{v'}$ denotes their inner product, $\ip{v}{v'}=\sum_{1\le i\le n}v(i)v'(i)$. For any $r>0$, $\log_r(\cdot)$ and $\exp_r(\cdot)$ respectively denote the logarithm and exponential functions base $r$. When $r=e$, we denote the former by $\ln(\cdot)$ and the latter by $\exp(\cdot)$. For any $p\in[0,1]$, $h(p)=-p\log_2p-(1-p)\log_2(1-p)$ denotes the binary entropy function logarithm base 2. For any $n$, $I_n$ denotes the $n\times n$ identity matrix. Given any $\boldsymbol{\mu}\in\R^k$ and $\Sigma\in\R^{k\times k}$, $\mathcal{N}(\boldsymbol{\mu},\Sigma)$ denotes the multivariate normal distribution in $\R^k$ with mean $\boldsymbol{\mu}$ and covariance $\Sigma$. Given a matrix $\mathcal{M}$, $\|\mathcal{M}\|_F$, $\|\mathcal{M}\|_2$, and $|\mathcal{M}|$ denote, respectively, the
Frobenius norm, the spectral norm, and the determinant of $\mathcal{M}$. We employ standard Bachmann-Landau asymptotic notation throughout, e.g. $\Theta(\cdot),O(\cdot),o(\cdot),\Omega(\cdot)$ and $\omega(\cdot)$, where the underlying asymptotics is (often) with respect to $n\to\infty$.  Whenever a confusion is possible, we reflect the underlying asymptotics as a subscript. We omit all floor/ceiling operators for simplicity.
\section{Main Results}\label{sec:main-results}
In what follows, we denote by $H(\bs)$ the Hamiltonian for the pure $p$-spin model per~\eqref{eq:p-spin-Hamiltonian}, where the subscripts $n$ and $p$ are dropped for simplicity.  
Our main results are now in order. (We note that the values of constants such as $P(\epsilon),P^*,\hat{P}$ may change from line to line.)
\subsection{Concentration of Number of Solutions}\label{sec:concentration-number-of-sol}
Fix any $0<\epsilon<1$ and let 
\begin{align}\label{eq:s-eps} 
    \mathcal{S}(\epsilon)&\triangleq \Bigl\{\bs\in\Sigma_n: H(\bs)\ge (1-\epsilon)\sqrt{2\ln 2}\Bigr\}.
\end{align}
Our results are crucially based on the following tight concentration property for $|\mathcal{S}(\epsilon)|$, which we believe is of potential independent interest.
\begin{proposition}\label{prop:2nd-mom-est}
    For any $\epsilon>0$, there exists a $P(\epsilon)\in\mathbb{N}$ such that the following holds. Fix any $p\ge P(\epsilon)$. Then, as $n\to\infty$
    \[
    \mathbb{E}\left[|\mathcal{S}(\epsilon)|^2\right] = \mathbb{E}\bigl[|\mathcal{S}(\epsilon)|\bigr]^2\bigl(1+o_n(1)\bigr).
    \]
\end{proposition}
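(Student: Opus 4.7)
The plan is to apply the second moment method, leveraging the fact that for large $p$, the covariance $\mathrm{Cov}(H(\bs), H(\bs')) = R^p/n$ (where $R = \ip{\bs}{\bs'}/n$ is the overlap) decouples rapidly whenever $|R| < 1$. Setting $s = \sqrt{n}(1-\epsilon)\sqrt{2\ln 2}$ and $q = \bar{\Phi}(s)$, standard Gaussian tail asymptotics give $\E{|\mathcal{S}(\epsilon)|} = 2^n q = \Theta(n^{-1/2}) \cdot 2^{(1-(1-\epsilon)^2)n}$. Grouping pairs of configurations by Hamming distance yields
\[
\E{|\mathcal{S}(\epsilon)|^2} = 2^n \sum_{k=0}^n \binom{n}{k} f(R_k), \qquad R_k = 1 - 2k/n,
\]
where $f(R) = \pr(X \geq s, Y \geq s)$ for a standardized bivariate Gaussian $(X,Y)$ with correlation $\rho = R^p$. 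The task reduces to proving that $\E{|\mathcal{S}(\epsilon)|^2}/\E{|\mathcal{S}(\epsilon)|}^2 = \sum_k (\binom{n}{k}/2^n) \cdot f(R_k)/q^2 = 1 + o(1)$.

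I would split the overlap axis at a threshold $r^\star = r^\star(\epsilon) \in (0,1)$ chosen close to $1$ so that $J(r^\star) \triangleq \ln 2 \cdot (1 - h((1+r^\star)/2)) > (1-\epsilon)^2 \ln 2$, i.e.\ $h((1+r^\star)/2) < \epsilon(2-\epsilon)$. For $|R| > r^\star$, the crude bound $f(R) \leq q$ combined with the large-deviation estimate on the number of pairs at such overlap contributes at most $\mathrm{poly}(n) \cdot \exp(n[(1-\epsilon)^2 \ln 2 - J(r^\star)]) = o(1)$. For $|R| \leq r^\star$ with $\rho = R^p \geq 0$, Price's identity $\partial_\rho \pr(X \geq s, Y \geq s) = \phi_\rho(s,s)$ together with monotonicity of $\phi_\rho(s,s)$ in $\rho \in [0,1)$ yields
\[
f(R) \leq q^2 + R^p \, \phi_{R^p}(s, s), \qquad \phi_\rho(s,s) = \frac{1}{2\pi\sqrt{1-\rho^2}} \exp\!\left(-\frac{s^2}{1+\rho}\right),
\]
while $f(R) \leq q^2$ when $\rho < 0$. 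The remaining task is to show that the error
\[
\mathrm{Err} \lesssim s^2 \cdot \E{R^p \exp\!\left(\frac{s^2 R^p}{1+R^p}\right) \ind\{0 \leq R \leq r^\star\}}
\]
(plus the analogous piece for $R \leq 0$ when $p$ is even) tends to $0$.

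The key analytic input will be that for any $\epsilon > 0$, there exists $P(\epsilon) \in \N$ such that for all $p \geq P(\epsilon)$, the function
\[
F(r) \triangleq 1 - h\!\left(\tfrac{1+r}{2}\right) - 2(1-\epsilon)^2 \cdot \frac{r^p}{1+r^p}
\]
is strictly positive on $(0, 1]$. I would prove this by choosing $\delta = \delta(\epsilon) > 0$ with $h(1-\delta/2) < \epsilon(2-\epsilon)$: for $r \in [1-\delta, 1]$, $r^p/(1+r^p) \leq 1/2$ gives $2(1-\epsilon)^2 r^p/(1+r^p) \leq (1-\epsilon)^2 < 1 - h(1-\delta/2) \leq 1 - h((1+r)/2)$; for $r \in (0, 1-\delta]$, $r^p/(1+r^p) \leq (1-\delta)^p \to 0$ uniformly as $p \to \infty$, while $1 - h((1+r)/2) \geq 1 - h((2-\delta)/2) > 0$ is bounded below. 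With $F > 0$ in place, the large-deviation bound $\pr(R = r) \leq \mathrm{poly}(n) \exp(-nJ(r))$ shows that each summand of $\mathrm{Err}$ is controlled (up to polynomial factors) by $\exp(-n \ln 2 \cdot F(r))$, which absorbs the intermediate regime $n^{-1/2+\delta'} \leq |R| \leq r^\star$.

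The main obstacle is the near-zero overlap regime $|R| \lesssim n^{-1/p}$, on which $F(r) = \Theta(r^2)$ is too small to yield useful exponential decay once multiplied by $n$. Here the factor $\exp(s^2 R^p/(1+R^p))$ is bounded (since $s^2 R^p \lesssim 1$), so I would instead estimate $\mathrm{Err}$ directly via the $p$-th moment: $s^2 \E{|R|^p} = O(n \cdot n^{-p/2}) = o(1)$ for $p \geq 3$, using $\E{|R|^p} = O(n^{-p/2})$ for $R$ a normalized sum of iid $\pm 1$ signs. The careful dovetailing of thresholds between this typical regime, the intermediate regime handled by the $F > 0$ inequality, and the near-identical regime $|R| > r^\star$ is where the main bookkeeping lies; the hardest step is the verification of the key lemma above, but once it is in hand, summing the three regimes produces $1 + o(1)$ as required.
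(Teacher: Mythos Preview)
Your approach is essentially the paper's: second moment method, group pairs by overlap, split the overlap axis into a near-identical regime (crude bound $f(R)\le q$ plus entropy deficit), an intermediate regime (exponential gain from $F(r)>0$), and a vanishing-overlap regime (polynomial decay from $\E{|R|^p}$). Your use of Price's identity to write $f(R)\le q^2+R^p\,\phi_{R^p}(s,s)$ is a clean alternative to the paper's direct bivariate tail bound, but it leads to the same exponents and the same regime structure.

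There is one genuine error in your proof sketch of the key lemma. For $r\in(0,1-\delta]$ you claim $1-h\bigl(\tfrac{1+r}{2}\bigr)\ge 1-h\bigl(\tfrac{2-\delta}{2}\bigr)>0$. This inequality is backward: $h$ is decreasing on $[\tfrac12,1]$, so $1-h\bigl(\tfrac{1+r}{2}\bigr)$ is increasing in $r$ and tends to $0$ as $r\to 0^+$; it is \emph{not} bounded below by a positive constant on $(0,1-\delta]$. Your argument as written therefore fails near $r=0$. The fix---which is exactly what the paper does via its entropy Taylor expansion and its Lemma controlling $-\tfrac{\alpha^4}{12\ln 2}+\tfrac{2(1-\epsilon)^2\alpha^p}{1+\alpha^p}\le 0$---is to use $1-h\bigl(\tfrac{1+r}{2}\bigr)\ge \tfrac{r^2}{2\ln 2}$ (all higher-order Taylor terms are nonnegative), so that
\[
F(r)\ \ge\ \frac{r^2}{2\ln 2}-2(1-\epsilon)^2\,\frac{r^p}{1+r^p}\ \ge\ r^2\Bigl(\frac{1}{2\ln 2}-2(1-\epsilon)^2\, r^{p-2}\Bigr),
\]
which is positive on $(0,1-\delta]$ once $p$ is large enough that $(1-\delta)^{p-2}<\tfrac{1}{4(1-\epsilon)^2\ln 2}$. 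You evidently know this local behavior (you later write $F(r)=\Theta(r^2)$), so this is a patchable slip rather than a missing idea; but the lemma as you argue it does not stand, and the quadratic lower bound on $1-h$ is precisely the input you need to make both your intermediate and near-zero regimes dovetail.
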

See Section~\ref{sec:pf-proposition} for the proof. Several remarks are in order. 

The proof of Proposition~\ref{prop:2nd-mom-est} involves a certain sum over all pairs $(\bs,\bs')\in\Sigma_n\times \Sigma_n$ with $H(\bs),H(\bs')\ge (1-\epsilon)\sqrt{2\ln 2}$. In order to study this sum, we employ various delicate estimates on binomial coefficients as well as a tail bound for bivariate normal random variables---note that $(H(\bs),H(\bs'))$ is a bivariate normal. Importantly, our argument shows that for $p$ large, the `dominant' contribution to the second moment comes from pairs $(\bs,\bs')$ that are nearly orthogonal, $n^{-1}\left|\ip{\bs}{\bs'}\right|\le n^{-O(1)}$.  We found it rather surprising that when $p$ is large, the second moment calculation is tight.

Using Paley-Zygmund inequality~\eqref{eq:paley-zygmund}, Proposition~\ref{prop:2nd-mom-est} immediately yields that for any fixed $\epsilon>0$ and large $p$, $|\mathcal{S}(\epsilon)|$ concentrates around its mean, $|\mathcal{S}(\epsilon)|\sim \mathbb{E}\bigl[|\mathcal{S}(\epsilon)|\bigr]$. This concentration property will be useful in our results to follow.

As an immediate corollary of this we obtain a simple proof that the ground-state energy for the Ising $p$-spin model is around $\sqrt{2\ln 2}$ for large $p$.
\begin{Corollary}\label{thm:ground-state-val}
    For any $\epsilon>0$, there is a $P(\epsilon)\in\mathbb{N}$ such that for $p\geq P(\epsilon)$,
    \[
    \mathbb{P}\left[(1-\epsilon)\sqrt{2\ln 2}\le \max_{\bs\in\Sigma_n}H(\bs)\le \sqrt{2\ln 2}\right]\ge 1-o_n(1).
    \]
\end{Corollary}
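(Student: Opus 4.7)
The plan is to prove the two bounds separately, using the first moment method for the upper bound and combining Proposition~\ref{prop:2nd-mom-est} with the Paley--Zygmund inequality for the lower bound. Note that since each entry of the tensor $\boldsymbol{J}$ is standard normal and there are $n^p$ terms in the sum defining $H(\bs)$, the normalization $n^{-(p+1)/2}$ gives $H(\bs)\sim\mathcal{N}(0,1/n)$ for every fixed $\bs\in\Sigma_n$.

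For the upper bound, I would apply a straightforward union bound. For any $\bs\in\Sigma_n$, a Gaussian tail estimate yields
\[
\mathbb{P}\bigl[H(\bs)\ge \sqrt{2\ln 2}\bigr] = \mathbb{P}\bigl[Z\ge \sqrt{2n\ln 2}\bigr] \le \frac{1}{\sqrt{4\pi n\ln 2}}\, 2^{-n},
\]
where $Z\sim\mathcal{N}(0,1)$. Summing over the $2^n$ configurations in $\Sigma_n$ gives
\[
\mathbb{P}\Bigl[\max_{\bs\in\Sigma_n}H(\bs)\ge \sqrt{2\ln 2}\Bigr] \le \frac{1}{\sqrt{4\pi n\ln 2}} = o_n(1),
\]
which is the desired upper bound. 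Note that this part does not require $p$ to be large; it is just a first moment computation.

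For the lower bound, I would use Proposition~\ref{prop:2nd-mom-est}. Fix $\epsilon>0$, take $p\ge P(\epsilon)$, and recall that $\max_{\bs}H(\bs)\ge (1-\epsilon)\sqrt{2\ln 2}$ if and only if $|\mathcal{S}(\epsilon)|\ge 1$. A standard lower bound on the Gaussian tail gives
\[
\mathbb{E}\bigl[|\mathcal{S}(\epsilon)|\bigr] = 2^n\,\mathbb{P}\bigl[Z\ge (1-\epsilon)\sqrt{2n\ln 2}\bigr] \ge \frac{c}{\sqrt{n}}\,2^{n(1-(1-\epsilon)^2)},
\]
so $\mathbb{E}[|\mathcal{S}(\epsilon)|]\to\infty$ as $n\to\infty$. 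Combining this with Proposition~\ref{prop:2nd-mom-est} and the Paley--Zygmund inequality
\[
\mathbb{P}\bigl[|\mathcal{S}(\epsilon)|\ge 1\bigr] \ge \frac{\mathbb{E}\bigl[|\mathcal{S}(\epsilon)|\bigr]^2}{\mathbb{E}\bigl[|\mathcal{S}(\epsilon)|^2\bigr]} = \frac{1}{1+o_n(1)} = 1-o_n(1)
\]
completes the argument. A union bound of the two one-sided estimates then gives the Corollary.

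There is no genuine obstacle here, since all the real work is encapsulated in Proposition~\ref{prop:2nd-mom-est}: this statement is in the strict sense a corollary. The only care needed is the Gaussian tail estimate establishing $\mathbb{E}[|\mathcal{S}(\epsilon)|]\to\infty$, so that Paley--Zygmund is not vacuous, but this is elementary.
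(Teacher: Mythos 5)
Your proof is correct and follows essentially the same route as the paper: a first-moment/union-bound argument for the upper bound and Proposition~\ref{prop:2nd-mom-est} combined with Paley--Zygmund (at $\theta=0$) for the lower bound. The side remark about checking $\mathbb{E}[|\mathcal{S}(\epsilon)|]\to\infty$ is harmless but unnecessary, since Paley--Zygmund with $\theta=0$ together with the second-moment ratio $1+o_n(1)$ already yields $\mathbb{P}[|\mathcal{S}(\epsilon)|>0]\ge 1-o_n(1)$ directly.
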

See Section~\ref{pf:ground-state-val} for the proof. The upper bound appearing in Corollary~\ref{thm:ground-state-val} follows from a simple application of the \emph{first moment method}. The lower bound, on the other hand, is also a simple implication of the \emph{second moment method} and Proposition~\ref{prop:2nd-mom-est}.

\begin{remark}
    Unlike Corollary~\ref{thm:ground-state-val}, it appears that Talagrand's results in~\cite{talagrand2000rigorous} yield an approximation for the ground-state energy only up to a factor of $1/2$. In particular, an argument based on the fact $\lim_{n\to\infty} \mathbb{E}[\ln Z_\beta]/n = \ln 2 + \beta^2/2$ for $\beta<\beta_p$ per~\eqref{eq:RSB} and the bound $\ln Z_\beta/n \le \ln 2 +\beta \OPT$, gives that for any $\epsilon>0$, there is a $P^*\in \N$ such that for $p\ge P^*$, $H^*\ge (1-\epsilon)\sqrt{2\ln 2}/2$, which is off by a factor of 1/2. This is due to the fact that the expression above for the (log) partition function is valid only up to $\beta_p$, which is asymptotically $\sqrt{2\ln 2}$, thus leading to a factor $1/2$ gap between the upper and lower bounds on the ground state values.
\end{remark}
\subsection{Clustering in the Landscape of Hamiltonian}\label{sec:clustering}
In this section, we establish a certain clustering property regarding the Hamiltonian~\eqref{eq:p-spin-Hamiltonian}. To that end, we rely on the following rather simple version of the Overlap Gap Property regarding pairs.
\begin{definition}\label{def:2-ogp}
Given $0<\nu_1<\nu_2<1$ and an $A\subset \Sigma_n$, $A$ is said to exhibit \emph{$(\nu_1,\nu_2)$-Overlap Gap Property},  $(\nu_1,\nu_2)$-OGP in short, if for any $\bs_1,\bs_2\in A$, either $d_H(\bs_1,\bs_2)\le \nu_1 n$ or $d_H(\bs_1,\bs_2)\ge \nu_2 n$.
\end{definition}
We employ Definition~\ref{def:2-ogp} when $A=\mathcal{S}(\epsilon)$ for some $\epsilon>0$. 
We now define the following notion of clustering.
\begin{definition}\label{def:CLUSTER}
    Let $A\subseteq\Sigma_n$. A collection of subsets of $A$, $(\mathcal{C}_\ell)_{\ell=1}^L$, is called a \emph{$(\nu_1,\nu_2)$-clustering} of $A$ if it is a partition of $A$ that satisfies:
    \begin{itemize}
\item[(a)] For any $1\le i \le L$ and any $\bs,\bs’\in\mathcal{C}_i$, $d_H(\bs,\bs’)\le \nu_1 n$.
\item[(b)] For any $1\le i < j \le L$ and any $\bs\in \mathcal{C}_i$, $\bs’\in\mathcal{C}_j$, $d_H(\bs,\bs’)\ge \nu_2 n$. 
\end{itemize}
\end{definition}
The next result shows that when a set $A$ exhibits the $(\nu_1,\nu_2)$-OGP in the sense of Definition~\ref{def:2-ogp} for some $0<\nu_1<\frac12\nu_2<\nu_2<1$, then it admits a unique $(\nu_1,\nu_2)$-clustering.
 \begin{proposition}\label{prop:ogp-implies-clustering}
Suppose that $A\subset \Sigma_n$ exhibits the $(\nu_1,\nu_2)$-OGP for some $0<\nu_1<\frac12\nu_2<\frac12$. Then, there exists a unique 
$(\nu_1,\nu_2)$-clustering of $A$,  $(\mathcal{C}_\ell)_{\ell=1}^L$.
\end{proposition}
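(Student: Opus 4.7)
The plan is to construct the clustering as the equivalence classes of a natural binary relation on $A$, and then to verify both conditions of Definition~\ref{def:CLUSTER} as well as uniqueness.

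Define the relation $\sim$ on $A$ by declaring $\bs \sim \bs'$ iff $d_H(\bs,\bs') \le \nu_1 n$. Reflexivity is immediate, and symmetry follows from the symmetry of the Hamming distance. The crux is transitivity. Suppose $\bs_1 \sim \bs_2$ and $\bs_2 \sim \bs_3$. The triangle inequality for $d_H$ yields
\[
d_H(\bs_1,\bs_3) \le d_H(\bs_1,\bs_2) + d_H(\bs_2,\bs_3) \le 2\nu_1 n < \nu_2 n,
\]
where the strict inequality uses the hypothesis $\nu_1 < \tfrac{1}{2}\nu_2$. Since $\bs_1,\bs_3 \in A$ and $A$ satisfies the $(\nu_1,\nu_2)$-OGP, the alternative $d_H(\bs_1,\bs_3) \ge \nu_2 n$ is ruled out, forcing $d_H(\bs_1,\bs_3) \le \nu_1 n$. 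Hence $\bs_1 \sim \bs_3$, and $\sim$ is an equivalence relation.

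Let $(\mathcal{C}_\ell)_{\ell=1}^L$ denote the equivalence classes of $\sim$ on $A$. By construction, any two elements of the same class have Hamming distance at most $\nu_1 n$, giving condition (a). For two elements $\bs \in \mathcal{C}_i$, $\bs' \in \mathcal{C}_j$ with $i \ne j$, we have $\bs \not\sim \bs'$, so $d_H(\bs,\bs') > \nu_1 n$; the OGP then upgrades this to $d_H(\bs,\bs') \ge \nu_2 n$, giving condition (b).

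For uniqueness, suppose $(\mathcal{C}'_\ell)_{\ell=1}^{L'}$ is any other $(\nu_1,\nu_2)$-clustering of $A$. If $\bs \sim \bs'$, then $d_H(\bs,\bs') \le \nu_1 n < \nu_2 n$, so by condition (b) they cannot lie in distinct clusters of the new partition; hence they share a cluster. Conversely, if $\bs$ and $\bs'$ belong to the same $\mathcal{C}'_\ell$, condition (a) forces $d_H(\bs,\bs') \le \nu_1 n$, so $\bs \sim \bs'$. Thus the $\mathcal{C}'_\ell$ are exactly the equivalence classes of $\sim$, proving uniqueness up to relabelling. The main (minor) obstacle is simply recognizing that the condition $\nu_1 < \tfrac{1}{2}\nu_2$ is precisely what is needed to close up transitivity through the OGP dichotomy; beyond that, the argument is a straightforward combinatorial unwinding.
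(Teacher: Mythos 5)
Your proof is correct and follows essentially the same route as the paper: define $\sim$ by $\bs\sim\bs'$ iff $d_H(\bs,\bs')\le\nu_1 n$, use the OGP dichotomy together with the triangle inequality and $2\nu_1<\nu_2$ to close transitivity, and take equivalence classes. You spell out the verification of conditions (a), (b) and the uniqueness argument, which the paper leaves implicit, but the underlying idea is identical.
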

Proposition~\ref{prop:ogp-implies-clustering} is originally due to~\cite{achlioptas2011solution}; its proof on Section~\ref{sec:pf-ogp-implies-clustering} is reproduced from~\cite{anschuetz2023combinatorial}. Note that the clusters $\mathcal{C}_i$ are well-separated: for any distinct $\mathcal{C}_i,\mathcal{C}_j$, ${\rm dist}(\mathcal{C}_i,\mathcal{C}_j)=\Omega(n)$. We highlight that under Proposition~\ref{prop:ogp-implies-clustering}, a single cluster (i.e.\,$L=1$) is still possible. 

Our next main result shows that for any $0<\epsilon<1-\frac{1}{\sqrt{2}}$ and any large $p$, $\mathcal{S}(\epsilon)$ appearing in~\eqref{eq:s-eps} breaks down into exponentially many well-separated clusters, i.e.\,$L=2^{\Omega(n)}$. 
\begin{theorem}\label{thm:clustered}
For any $0<\epsilon<1-\frac{1}{\sqrt{2}}$, there exists $\hat{P}\in\N$, $\nu_1,\nu_2$ with $0<\nu_1<\frac12\nu_2<\nu_2<1$ and $c_1>c_2>0$ such that the following holds. Fix any $p\ge \hat{P}$.
\begin{itemize}
    \item[(a)] The set $\mathcal{S}(\epsilon)$ exhibits $(\nu_1,\nu_2)$-OGP w.p.\,at least $1-e^{-\Theta(n)}$ as $n\to\infty$. 
    \item [(b)] Let $\mathcal{C}_1,\dots,\mathcal{C}_L$ be the $(\nu_1,\nu_2)$-clustering of $\mathcal{S}(\epsilon)$ per Proposition~\ref{prop:ogp-implies-clustering}. Then, with probability at least $1-e^{-\Theta(n)}$ as $n\to\infty$,  $|\mathcal{S}(\epsilon)|\ge 2^{c_1n}$ and 
    $\max_{1\le i\le L}|\mathcal{C}_i|\le 2^{c_2 n}$. In particular, $L=2^{\Omega(n)}$. 
\end{itemize}
Furthermore, the same holds also for the set $\mathcal{S}(\epsilon)\setminus\mathcal{S}(\epsilon')$, where $0<\epsilon'<\epsilon<1-\frac{1}{\sqrt{2}}$ are arbitrary. 
\end{theorem}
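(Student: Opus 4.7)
The plan is to establish part (a) via a direct first moment computation using the bivariate Gaussian tail, and then derive part (b) from (a), Proposition~\ref{prop:2nd-mom-est}, Proposition~\ref{prop:ogp-implies-clustering}, and a Hamming ball volume bound.

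For part (a), first fix $\epsilon\in(0,1-1/\sqrt{2})$ and observe that $\delta := 2(1-\epsilon)^2 - 1 > 0$. Choose $\nu_2\in(0,1/2)$ small enough that $h(\nu_2) < \delta$, and then $\nu_1\in(0,\nu_2/2)$ small enough that $h(\nu_1) < 1-(1-\epsilon)^2$ as well; both are possible since $h$ is continuous with $h(0)=0$. For any pair $(\bs_1,\bs_2)\in\Sigma_n^2$ with $d_H(\bs_1,\bs_2)=d$, the vector $(H(\bs_1),H(\bs_2))$ is bivariate Gaussian with unit variances and correlation $\rho = R^p$, where $R = 1-2d/n$. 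Applying the standard Gaussian tail bound to $H(\bs_1)+H(\bs_2)\sim\mathcal{N}(0,2(1+\rho))$ gives
\[
\mathbb{P}\bigl[H(\bs_1)\ge E,\; H(\bs_2)\ge E\bigr] \;\le\; \exp\bigl(-E^2/(1+\rho)\bigr),
\]
with $E = (1-\epsilon)\sqrt{2\ln 2}$. Summing over $d\in(\nu_1 n,\nu_2 n)$, using $\binom{n}{d}\le 2^{h(\nu_2)n}$ on this range, and noting that $\rho \le (1-2\nu_1)^p$ uniformly, the expected number of forbidden pairs is bounded above by
\[
\mathrm{poly}(n)\cdot 2^{(1+h(\nu_2))n}\cdot 2^{-2(1-\epsilon)^2 n/(1+(1-2\nu_1)^p)}.
\]
For $p\ge \hat{P}$ large enough (depending on $\epsilon,\nu_1,\nu_2$), $(1-2\nu_1)^p$ is small enough that the exponent is strictly negative by the choice of $\nu_2$, so this expectation is $2^{-\Omega(n)}$. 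Markov's inequality then yields the exponentially small probability of a bad pair.

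For part (b), Proposition~\ref{prop:ogp-implies-clustering} together with part (a) produces a $(\nu_1,\nu_2)$-clustering $\{\mathcal{C}_i\}_{i=1}^L$ on the OGP event. For the size lower bound, Proposition~\ref{prop:2nd-mom-est} combined with Paley-Zygmund (applied quantitatively, exploiting that the $o_n(1)$ slack is in fact exponentially small in the sharp second moment regime) gives $|\mathcal{S}(\epsilon)| \ge \tfrac{1}{2}\mathbb{E}[|\mathcal{S}(\epsilon)|] \ge 2^{c_1 n}$ with $c_1 = 1-(1-\epsilon)^2 - o(1)$, with probability $1-e^{-\Theta(n)}$. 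For the cluster upper bound, any $\mathcal{C}_i$ has diameter at most $\nu_1 n$ by definition, so it is contained in a Hamming ball of radius $\nu_1 n$ around any of its points; hence $|\mathcal{C}_i|\le 2\cdot 2^{h(\nu_1)n}$. Setting $c_2 = h(\nu_1)$, our choice of $\nu_1$ guarantees $c_2 < c_1$, so $L\ge |\mathcal{S}(\epsilon)|/\max_i|\mathcal{C}_i| \ge 2^{(c_1-c_2)n}=2^{\Omega(n)}$. The ``furthermore'' part follows the same way: $\mathcal{S}(\epsilon)\setminus\mathcal{S}(\epsilon')$ inherits the $(\nu_1,\nu_2)$-OGP from $\mathcal{S}(\epsilon)$, and since $\mathbb{E}[|\mathcal{S}(\epsilon')|]/\mathbb{E}[|\mathcal{S}(\epsilon)|]\to 0$ exponentially fast, $|\mathcal{S}(\epsilon)\setminus\mathcal{S}(\epsilon')|\ge \tfrac{1}{2}|\mathcal{S}(\epsilon)|$ with high probability.

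The principal technical obstacle is making the first-moment estimate uniformly sharp over the entire overlap range. The structural point enabling this for large $p$ is that $\rho=R^p$ becomes negligible uniformly for $R\in(0,1-2\nu_1]$, effectively decoupling the joint Gaussian tail. The quantitative condition $\epsilon<1-1/\sqrt{2}$ is precisely what forces the exponent $2(1-\epsilon)^2$ to exceed $1$, leaving room to absorb the entropy $h(\nu_2)$ while still beating the cube entropy $1$. A secondary technical concern is establishing exponential (not merely $o(1)$) concentration for the lower bound on $|\mathcal{S}(\epsilon)|$; this relies on the $o_n(1)$ in Proposition~\ref{prop:2nd-mom-est} being quantitatively exponentially small, which is typical of sharp second moment arguments in this regime.
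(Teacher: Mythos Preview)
Your argument for part~(a) is essentially identical to the paper's: a first-moment count of forbidden pairs, with the bivariate Gaussian tail controlled via the correlation $\rho=(1-2d/n)^p$, which becomes negligible for large $p$. The paper uses Lemma~\ref{lemma:biv-tail} rather than your $H(\bs_1)+H(\bs_2)$ bound, but the resulting exponent is the same.

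Part~(b) is where you diverge, and your route is genuinely more elementary. The paper bounds $\max_i|\mathcal{C}_i|$ by first bounding the \emph{number of close pairs} $|S(0,\nu_1,\epsilon)|$ via a separate first-moment argument (Proposition~\ref{prop:cluster-size}, which itself requires splitting the range $[0,\nu_1]$ into $[0,\delta]\cup[\delta,\nu_1]$), and then observing $\max_i|\mathcal{C}_i|^2\le 4|S(0,\nu_1,\epsilon)|$. You skip all of this by using the deterministic Hamming-ball volume bound: since each cluster has diameter $\le\nu_1 n$, it sits inside a ball of radius $\nu_1 n$, so $|\mathcal{C}_i|\le n^{O(1)}2^{h(\nu_1)n}$. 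Provided $\nu_1$ is chosen with $h(\nu_1)<1-(1-\epsilon)^2$ (which you do), this gives $c_2<c_1$ directly. This is cleaner and avoids the auxiliary parameter $\delta$ and the two-stage inequality chain the paper uses; the paper's approach would, however, give a sharper $c_2$ if one cared about constants, since it uses the Gaussian tail to suppress close pairs rather than just the raw ball volume.

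One caveat: your claim that the $o_n(1)$ in Proposition~\ref{prop:2nd-mom-est} is exponentially small is not correct as stated. Inspecting the proof of that proposition, the dominant error comes from the ``vanishing $|\alpha|$'' regime and is only polynomially small, so Paley--Zygmund delivers $|\mathcal{S}(\epsilon)|\ge 2^{c_1 n}$ with probability $1-o_n(1)$, not $1-e^{-\Theta(n)}$. The paper's own proof in Section~\ref{sec:pf-clustered} also only claims $1-o_n(1)$ for this step (see~\eqref{eq:S-eps-low-bd}), so this is a discrepancy between the theorem statement and what either proof actually establishes, not a defect specific to your argument.
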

See below for the proof sketch and Section~\ref{sec:pf-clustered} for the complete proof. 
Later in Theorem~\ref{thm:shattering}, we leverage Theorem~\ref{thm:clustered} to show that the Ising pure $p$-spin model exhibits a strong notion of shattering mentioned earlier for certain temperatures in the replica symmetric region.
\paragraph{Proof Sketch for Theorem~\ref{thm:clustered}} We first show that for any $\epsilon$ in the given range, $\mathcal{S}(\epsilon)$ exhibits $(\nu_1,\nu_2)$-OGP for some $\nu_1<\frac12\nu_2$. This is based on a (rather simple) first moment argument. Using Proposition~\ref{prop:ogp-implies-clustering}, we take the $(\nu_1,\nu_2)$-clustering of $A$ and denote it by $\mathcal{C}_1,\dots,\mathcal{C}_L$, $L\ge 1$. It remains to show $L=2^{\Omega(n)}$. To that end, we use Proposition~\ref{prop:2nd-mom-est} and the second moment method to show that for some $c_1>0$, $|\mathcal{S}(\epsilon)|\ge 2^{c_1n}$ w.h.p. We then show, using the first moment method, that the number of pairs $(\bs_1,\bs_2)\in\mathcal{S}(\epsilon)\times \mathcal{S}(\epsilon)$ with $n^{-1}d_H(\bs_1,\bs_2)\le \nu_1$ is at most $2^{c'n}$ for some $c'>0$. 
Now observe that the number of all such pairs is $\sum_{i\le L}\binom{|\mathcal{C}_i|}{2}$, so
\[
\frac14 \max_{i\le L}|\mathcal{C}_i|^2 \le \sum_{i\le L}\binom{|\mathcal{C}_i|}{2} \le 2^{c'n},
\]
which immediately yields $\max_i |\mathcal{C}_i|\le 2^{c_2n+1}$ for $c_2=c'/2$. We then verify $c'<2c_1$, so that $c_1>c_2$. With this and the fact $|\mathcal{S}(\epsilon)| = \sum_{1\le i\le L}|\mathcal{C}_i|$, we immediately obtain $L=2^{\Omega(n)}$, as claimed.  
\subsection{Shattering}\label{sec:shattering}

In this section, we establish that for any $\sqrt{\ln 2}<\beta<\sqrt{2\ln 2}$ and any large enough $p$, the Ising pure $p$-spin model exhibits shattering.
We begin by first providing a notion of shattering.
 Let $d_H$ denote Hamming distance in $\Sigma_n$. For two sets $A,B$, denote their distance by ${\rm dist}(A,B)=\min_{x\in A,y\in B} d_H(x,y)$, and for a set $A$ denote its diameter by ${\rm diam}(A)=\max_{x,y\in A}d(x,y)$.
\begin{definition}\label{def:shattering}
    We say that the Gibbs measure is $(a,b)$-\emph{shattered} 
at inverse temperature $\beta>0$ if there exists constants $c,c'>0$ such that w.h.p.\,as $n\to\infty$ (w.r.t.\,$\boldsymbol{J}\in(\R^n)^{\otimes p}$), there exists non-empty subsets  
$(\mathcal{C}_\ell)_{\ell=1}^L\subset \Sigma_n$
called 
clusters such that 
the following holds:
\begin{itemize}
\item[(a)] There are exponentially many clusters:
\[
\frac{1}{n} \log_2 L \geq c.
\]
\item[(b)] The clusters are confined and well-separated, that is 
\[
\max_{1\leq i<j\leq L} {\rm diam}(\mathcal{C}_i) \leq a n \qquad \text{ and }\qquad 
\min_{1\le i<j\le L}{\rm dist}(\mathcal{C}_i,\mathcal{C}_j)=b n.
\]
\item[(c)] Each cluster is sub-dominant, namely has exponentially small Gibbs mass: \[
\max_{1\le i\le L}\mu_\beta(\mathcal{C}_i)\le \exp(-c'n).
\]
\item[(d)] The clusters collectively contain all but a vanishing fraction of total Gibbs mass:
\[
\mu_\beta\bigl(\cup_{1\le i\le L}\mathcal{C}_i\bigr)=1-o_n(1).
\]
\end{itemize}
\end{definition}
Our result is as follows. 
\begin{theorem}\label{thm:shattering}
    For any $\sqrt{\ln 2}<\beta<\sqrt{2\ln 2}$ and any small enough $\kappa>0$, there exists a $P^*\in\N$, $0<\nu_1<\frac12\nu_2<\nu_2<1$ and $c,c'>0$ such that the following holds. For any $p\ge P^*$, the Gibbs measure is $(2\nu_1,\nu_2)$-shattered at inverse temperature $\beta$. In particular,  the clusters are a $(\nu_1,\nu_2)$-clustering of $\{\bs\in\Sigma_n:\bigl|H(\bs)-\beta\bigr|\le \kappa\sqrt{2\ln 2}\}$ in the sense of Definition~\ref{def:CLUSTER}. 
\end{theorem}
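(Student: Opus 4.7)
The strategy is to take the shattering clusters to be the $(\nu_1,\nu_2)$-clustering produced by Theorem~\ref{thm:clustered} applied to the energy strip
\begin{equation*}
A\triangleq\{\bs\in\Sigma_n:|H(\bs)-\beta|\le\kappa\sqrt{2\ln 2}\}=\mathcal{S}(\epsilon_1)\setminus\mathcal{S}(\epsilon_2),\qquad \epsilon_{1,2}=1-\tfrac{\beta}{\sqrt{2\ln 2}}\pm\kappa,
\end{equation*}
and then verify properties (a)--(d) of Definition~\ref{def:shattering}. The hypothesis $\beta>\sqrt{\ln 2}$ forces $1-\beta/\sqrt{2\ln 2}<1-1/\sqrt{2}$, so that for $\kappa$ small both $\epsilon_i\in(0,1-1/\sqrt{2})$ and the last assertion of Theorem~\ref{thm:clustered} applies. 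It produces constants $0<\nu_1<\nu_2/2$, $c_1>c_2>0$ (with the gap $c_1-c_2>0$ fixed), and w.h.p.\,a unique $(\nu_1,\nu_2)$-clustering $(\mathcal{C}_\ell)_{\ell=1}^L$ of $A$ satisfying $|A|\ge 2^{c_1n}$, $\max_\ell|\mathcal{C}_\ell|\le 2^{c_2n}$, and $L=2^{\Omega(n)}$. This immediately gives (a) and (b): the exponential multiplicity, $\mathrm{diam}(\mathcal{C}_i)\le\nu_1n\le 2\nu_1n$, and $\mathrm{dist}(\mathcal{C}_i,\mathcal{C}_j)\ge\nu_2n$.

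\textbf{Sub-dominance, property (c).} For any $\bs\in\mathcal{C}_i\subset A$ we have $H(\bs)\le\beta+\kappa\sqrt{2\ln 2}$, and restricting $Z_\beta$ to $A$ gives $Z_\beta\ge|A|\,e^{\beta n(\beta-\kappa\sqrt{2\ln 2})}$. Dividing,
\begin{equation*}
\mu_\beta(\mathcal{C}_i)\le\frac{|\mathcal{C}_i|}{|A|}\,e^{2\beta n\kappa\sqrt{2\ln 2}}\le 2^{(c_2-c_1)n}\,e^{2\beta n\kappa\sqrt{2\ln 2}}.
\end{equation*}
Choosing $\kappa$ small relative to the fixed gap $c_1-c_2$ makes the right-hand side at most $e^{-c'n}$ for some $c'>0$.

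\textbf{Total Gibbs mass on $A$, property (d).} For a sharper lower bound on $Z_\beta$, fix small $\eta>0$ and $\delta>0$ with $\delta^2/2<\eta$, and let $\epsilon=1-(\beta-\delta)/\sqrt{2\ln 2}$. By Proposition~\ref{prop:2nd-mom-est} and Chebyshev, $|\mathcal{S}(\epsilon)|\ge n^{-O(1)}\exp(n(\ln 2-(\beta-\delta)^2/2))$ w.h.p., so
\begin{equation*}
Z_\beta\ge|\mathcal{S}(\epsilon)|\,e^{\beta n(\beta-\delta)}\ge\exp\!\bigl(n(\ln 2+\beta^2/2-\eta)\bigr)\quad\text{w.h.p.}
\end{equation*}
For the numerator, split $\Sigma_n\setminus A=B_-\sqcup B_+$ with $B_\pm=\{\bs:\pm(H(\bs)-\beta)>\kappa\sqrt{2\ln 2}\}$. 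Markov's inequality applied to $|\{H\ge y\}|$ plus a union bound over a polynomial grid gives $|\{H\ge y\}|\le n\cdot 2^ne^{-ny^2/2}$ simultaneously for all grid $y>0$, w.h.p. Summing against $e^{\beta ny}$ layer by layer and using $\ln 2+\beta y-y^2/2=\ln 2+\beta^2/2-(y-\beta)^2/2\le\ln 2+\beta^2/2-\kappa^2\ln 2$ on the range $|y-\beta|\ge\kappa\sqrt{2\ln 2}$,
\begin{equation*}
Z_\beta(B_+)+Z_\beta(B_-\cap\{H\ge 0\})\le n^{O(1)}\exp\!\bigl(n(\ln 2+\beta^2/2-\kappa^2\ln 2)\bigr).
\end{equation*}
The residual set $B_-\cap\{H<0\}$ contributes at most $2^n=e^{n\ln 2}$, and $\beta^2/2>\kappa^2\ln 2$ (implied by $\beta>\sqrt{\ln 2}$ for small $\kappa$) lets this trivial bound be absorbed. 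Choosing $\eta<\kappa^2\ln 2$ finally yields $\mu_\beta(\Sigma_n\setminus A)\le n^{O(1)}e^{-n(\kappa^2\ln 2-\eta)}\to 0$, which is (d).

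\textbf{Main obstacle.} The delicate point is (d). The high-energy tail $B_+$ is controlled routinely by first moment, but $B_-$ is exponentially large and the Gaussian bound $\Pr[H\ge y]\le e^{-ny^2/2}$ degenerates once $y\le 0$. One must therefore handle $\{H<0\}$ separately via the crude estimate $|\{H<0\}|\le 2^n$ and verify that the resulting bound $e^{n\ln 2}$ is still dominated by the partition-function lower bound from Step 3; this is precisely where the hypothesis $\beta>\sqrt{\ln 2}$ supplies the needed slack between $\beta^2/2$ and $\kappa^2\ln 2$.
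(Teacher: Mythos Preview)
Your proof is correct and follows essentially the same route as the paper: apply Theorem~\ref{thm:clustered} to the energy strip to get (a)--(c), and for (d) show $Z_\beta$ is dominated by the strip via first-moment upper bounds on the complement together with a second-moment lower bound on $Z_\beta$ (the paper packages this last step as a separate Proposition using two-sided concentration on energy bands, but the content is the same). One small correction to your closing remark: the inequality $\beta^2/2>\kappa^2\ln 2$ holds for \emph{any} $\beta>0$ once $\kappa$ is small, so the threshold $\beta>\sqrt{\ln 2}$ is actually needed only where you first invoke it---to place $\epsilon_{1,2}$ in the range $(0,1-1/\sqrt{2})$ required by Theorem~\ref{thm:clustered}---and not for handling $\{H<0\}$.
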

See below for the proof sketch and Section~\ref{sec:shatttering-pf} for the complete proof. Namely, for any $\beta\in(\sqrt{\ln 2},\sqrt{2\ln 2})$ and any large enough $p$, the Ising pure $p$-spin model exhibits shattering. That is, there exists exponentially many well-separated clusters $\mathcal{C}_i$, $1\le i\le L$, such that with respect to the Gibbs distribution $\mu_\beta(\cdot)$ at inverse temperature $\beta$: (a) each cluster is \emph{sub-dominant}, i.e.\,contains an exponential small fraction of total Gibbs mass, and (b) the clusters collectively contain all but a vanishing fraction of Gibbs mass.
Our proof (modulo straightforward modifications) adapts also to the REM. That is, for any $\beta$ in the range above, the REM is also exhibits the shattering in the sense of above. The value $\sqrt{2\ln 2}$ corresponds to the critical temperature for the REM, see~\cite[Theorem~3.1]{bovier2009short}. 
\paragraph{Proof Sketch for Theorem~\ref{thm:shattering}} We first apply Theorem~\ref{thm:clustered} to establish that the set $\mathbb{D}(\beta,\kappa)=\{\bs\in\Sigma_n:H(\bs)\in[\beta-\kappa\sqrt{2\ln 2},\beta+\kappa\sqrt{2\ln 2}]\}$ partitions into exponentially many well-separated clusters. To verify  that each cluster sub-dominant, we rely on the estimates regarding the size of each cluster per Theorem~\ref{thm:clustered}. The most involved part of the proof is to show that the clusters collectively contain (w.h.p.) all but a vanishing fraction of Gibbs mass. We establish this by showing that for any $\beta<\sqrt{2\ln 2}$ and any fixed large enough $p$, the partition function $Z_\beta$ is dominated, w.h.p., by configurations $\bs\in\mathbb{D}(\beta,\kappa)$. Our result in fact proves a stronger conclusion that $\mu_\beta\bigl(\mathbb{D}(\beta,\kappa)^c\bigr)\le \exp(-\Theta(n))$. This is crucially based on the concentration property, Proposition~\ref{prop:2nd-mom-est} above.  For details, see Section~\ref{sec:shatttering-pf}. 

\subsection{Multi Overlap Gap Property ($m$-OGP)}\label{sec:m-ogp}
Equipped with the ground-state value per Corollary~\ref{thm:ground-state-val}, a natural algorithmic question is whether a near ground-state can be found efficiently. As we discussed in the introduction, the $p$-spin model exhibits the Overlap Gap Property (OGP), which is a barrier for large classes of algorithms. In particular, Huang and Sellke~\cite{huang2021tight,huang2023algorithmic} established that the $p$-spin model exhibits a rather sophisticated version of the OGP, dubbed as the branching OGP; they subsequently obtained tight lower bounds against the class of Lipschitz algorithms. 

The existing proofs for the OGP for spin glasses however are very technical; in particular they rely on the Parisi formula. In this section, we show that for large $p$, one can in fact consider a simpler multi OGP and establish its presence using rather elementary tools, namely the \emph{first moment method}. We begin by formalizing the set of $m$-tuples we investigate. 
\begin{definition}\label{def:admit}
    Let $m\in\N$, $0<\gamma<1$, $0<\eta<\xi<1$ and $\mathcal{I}\subset[0,\frac{\pi}{2}]$. Denote by $S(\gamma,m,\xi,\eta,\mathcal{I})$ the set of all $m$-tuples $\bs^{(t)}\in\Sigma_n,1\le t\le m$, that satisfy the following:
    \begin{itemize}
        \item {\bf $\gamma$-Optimality:} There exists $\tau_1,\dots,\tau_m\in\mathcal{I}$ such that 
        \[
        n^{-\frac{p+1}{2}}\sum_{1\le i_1,\dots,i_p\le n}\widehat{J}^{(t)}_{i_1,\dots,i_p}(\tau_t)\bs^{(t)}_{i_1}\cdots\bs^{(t)}_{i_p}\ge \gamma\sqrt{2\ln 2},\quad \forall 1\le t\le m,
        \]
        where for any $1\le t\le m$ and $\tau\in[0,\frac{\pi}{2}]$,
        \[
        \widehat{J}^{(t)}_{i_1,\dots,i_p}(\tau) = \cos(\tau)J^{(0)}_{i_1,\dots,i_p}+\sin(\tau)J^{(t)}_{i_1,\dots,i_p}
        \]
        for i.i.d.\,$J^{(t)}_{i_1,\dots,i_p}\sim \cN(0,1)$, $0\le t\le m$ and $1\le i_1,\dots,i_p\le n$.
        \item {\bf Overlap Constraint:} For any $1\le t<\ell\le m$, $n^{-1}\ip{\bs^{(t)}}{\bs^{(\ell)}}\in[\xi-\eta,\xi]$.
    \end{itemize}
\end{definition}
Definition~\ref{def:admit} regards $m$-tuples that are near-optimal with respect to correlated Hamiltonians. The term $\gamma$ quantifies the near-optimality, and the set $\mathcal{I}$ is used for defining correlated instances. It is necessary to consider correlated instances to obstruct stable algorithms, see below. The terms $\xi,\eta$ collectively define an overlap constraint, where one can think of $\xi\gg \eta$. Namely, the $m$-tuples $\bs_1,\dots,\bs_m\in\Sigma_n$ considered in Definition~\ref{def:admit} are nearly equidistant with pairwise Hamming distance about $n\frac{1-\xi}{2}$.

Our next main result establishes that for $p$ large, the Ising pure $p$-spin model exhibits the symmetric version of the ensemble $m$-OGP.
\begin{theorem}\label{thm:m-ogp}
For any $m\in\N$ and any $\gamma>1/\sqrt{m}$, there exists $0<\eta<\xi<1$, $c>0$, and $P^*\in\N$ such that the following holds. Fix any $p\ge P^*$ and any $\mathcal{I}\subset [0,\frac{\pi}{2}]$ with $|\mathcal{I}|\le 2^{cn}$. Then, 
\[
\mathbb{P}\bigl[S(\gamma,m,\xi,\eta,\mathcal{I})\ne\varnothing\bigr]\le e^{-\Theta(n)}
\]
as $n\to\infty$. 
\end{theorem}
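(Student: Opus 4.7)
The plan is a first-moment calculation on $|S(\gamma,m,\xi,\eta,\mathcal{I})|$ followed by Markov's inequality. Parameters $\xi\in(0,1)$, $\eta\in(0,\xi)$, $c>0$, and $p$ will be chosen (in that order) once $\gamma>1/\sqrt{m}$ is fixed. Fix an arbitrary $m$-tuple $(\bs^{(1)},\dots,\bs^{(m)})$ with pairwise overlaps $\rho_{t\ell}=n^{-1}\ip{\bs^{(t)}}{\bs^{(\ell)}}\in[\xi-\eta,\xi]$ and any $(\tau_1,\dots,\tau_m)\in\mathcal{I}^m$. The independence of $\{J^{(t)}_{i_1,\dots,i_p}\}$ across $t\ne t'$ implies that $\bigl(\sqrt{n}\,H^{(t)}(\bs^{(t)})\bigr)_{t=1}^{m}$ is a centred Gaussian vector with unit variances and off-diagonal covariances $\cos(\tau_t)\cos(\tau_\ell)\,\rho_{t\ell}^{\,p}$, whose magnitudes are at most $\xi^p$ and hence vanish as $p\to\infty$.

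This is the mechanism of the proof: for large $p$ the $m$ correlated Hamiltonians decouple. Writing the covariance matrix as $\Sigma=I+\Delta$ with $\|\Delta\|_F\le m\xi^p$, a direct estimate on the multivariate Gaussian density (using $\Sigma^{-1}=I+O(\xi^p)$ together with Gershgorin to control $|\Sigma|$) yields
\[
\mathbb{P}\!\left[H^{(t)}(\bs^{(t)})\ge \gamma\sqrt{2\ln 2}\ \ \forall\,t\right]\le 2^{-m\gamma^{2}n(1-\epsilon_p)},\qquad \epsilon_p\to 0\text{ as }p\to\infty,
\]
uniformly in the overlaps and angles. Separately, the number of $m$-tuples with all pairwise overlaps in $[\xi-\eta,\xi]$ is bounded, by retaining only the constraint against $\bs^{(1)}$ and applying Stirling to the resulting binomial coefficient, by $2^{n\left[1+(m-1)\bigl(h\bigl((1+\xi)/2\bigr)+o_\eta(1)\bigr)\right]}$, where $h(\cdot)$ denotes base-$2$ binary entropy. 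Combining these bounds with the union bound $|\mathcal{I}|^m\le 2^{cmn}$ gives
\[
\mathbb{E}|S(\gamma,m,\xi,\eta,\mathcal{I})|\le 2^{n\left[cm+1+(m-1)\bigl(h((1+\xi)/2)+o_\eta(1)\bigr)-m\gamma^{2}(1-\epsilon_p)\right]}.
\]

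Since $\gamma>1/\sqrt{m}$ means $m\gamma^{2}>1$, I first choose $\xi\in(0,1)$ close enough to $1$ that $(m-1)h\bigl((1+\xi)/2\bigr)<(m\gamma^{2}-1)/2$ (feasible because $h\bigl((1+\xi)/2\bigr)\to 0$ as $\xi\to 1$), then shrink $\eta$ and $c$ and take $p$ large enough that all remaining error terms are at most $(m\gamma^{2}-1)/4$. The exponent is then strictly negative, so $\mathbb{E}|S|\le 2^{-\Theta(n)}$ and Markov's inequality delivers the claim. The step I expect to be the main obstacle is the joint Gaussian tail bound above: although each of the $\binom{m}{2}$ pairwise correlations is individually small, they may be positive, so one cannot simply invoke Šidák's or Slepian's comparison against independent Gaussians to get an upper bound. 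Instead the $(1-\epsilon_p)$ factor must be produced directly from the Gaussian density and made uniform over all valid overlap/angle configurations, which is what forces the choice of $p$ to depend on $m$ and $\gamma$.
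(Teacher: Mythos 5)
Your proposal is correct and, at the level of architecture, is exactly the first-moment computation the paper carries out: bound $\mathbb{E}\bigl|S(\gamma,m,\xi,\eta,\mathcal{I})\bigr|$ by a counting term times a joint Gaussian tail times a union bound $|\mathcal{I}|^m\le 2^{cmn}$, observe that the entropy factor $h\bigl(\tfrac{1-\xi+\eta}{2}\bigr)$ can be made small by taking $\xi$ near $1$ and $\eta$ small, observe that the off-diagonal covariances $\cos\tau_t\cos\tau_\ell\,\rho_{t\ell}^p\in[0,\xi^p]$ vanish as $p\to\infty$, and finish with Markov. Where you deviate is only in the implementation of the joint tail. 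The paper does invoke Slepian's lemma, but not for the purpose you rule out: it compares an arbitrary choice of angles against $\tau_1=\cdots=\tau_m=0$ (monotonicity of $\cos\tau_k\cos\tau_\ell\,\rho^p$ in $\tau$), then applies the Savage--Hashorva multivariate tail bound (Theorem~\ref{thm:multiv-tail}) with Sherman--Morrison and Wielandt--Hoffman to control $\Sigma^{-1}$ and $|\Sigma|$. You sidestep Slepian entirely by bounding the covariances uniformly over $\tau$, which is simpler and equally valid. The one soft spot is your ``direct estimate on the multivariate Gaussian density'': a pointwise density bound is not a tail probability, so a sentence of justification is missing. It is easy to supply and can be made even more elementary than the paper's route: since $\sum_{t}X_t\sim\mathcal{N}(0,\boldsymbol{1}^T\Sigma\boldsymbol{1})$ with $\boldsymbol{1}^T\Sigma\boldsymbol{1}\le m\bigl(1+(m-1)\xi^p\bigr)$, the inclusion $\{\min_t X_t\ge a\}\subseteq\{\sum_t X_t\ge ma\}$ and a scalar Gaussian tail give $\mathbb{P}\bigl[\min_t X_t\ge\gamma\sqrt{2n\ln 2}\bigr]\le 2^{-m\gamma^2 n/(1+(m-1)\xi^p)}$, which is your $2^{-m\gamma^2 n(1-\epsilon_p)}$ with $\epsilon_p=(m-1)\xi^p/(1+(m-1)\xi^p)$, uniformly in overlaps and angles as required.
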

Our proof is based on the \emph{first moment method}. More specifically, we let $M=\bigl|S(\gamma,m,\xi,\eta,\mathcal{I})\bigr|$ and show that $\mathbb{E}[M]= e^{-\Theta(n)}$ 
 %
  for suitable $\xi,\eta$ and large $p$. Our argument is based on a tail bound regarding multivariate normal random vectors (reproduced below as Theorem~\ref{thm:multiv-tail}), as well as Slepian's Gaussian comparison inequality (also reproduced as Lemma~\ref{lemma:Slepian})~\cite{slepian1962one} to address correlated instances. For large $p$, our argument shows that the exponent of a certain probability term regarding an $m$-dimensional multivariate normal random vector is close to that of $m$ i.i.d.\,standard normals. See Section~\ref{sec:pf-m-ogp} for the complete proof.

In conclusion, for any $m\in\N$ and any $\gamma>1/\sqrt{m}$, the pure $p$-spin model exhibits ensemble symmetric $m$-OGP above $\gamma\sqrt{2\ln 2}$ for all large enough $p$.
\paragraph{Symmetric $m$-OGP for large $p$ and Algorithmic Threshold in REM} We observe from Theorem~\ref{thm:m-ogp} that the onset of the symmetric $m$-OGP (for a suitable $m\in\N$) approaches to 0 as $p$ grows. Curiously, the value $0$ is the algorithmic threshold for the REM as we now elaborate. For this, we rely on a prior work by Addario-Berry and Maillard~\cite{addario2020algorithmic} which studies a continuous version of the REM called CREM. Using the notation of~\cite[Theorem~1.1]{addario2020algorithmic}, we observe that REM corresponds to CREM with $A(t) = 0\cdot \ind\{t\in[0,1)\} + \ind\{t=1\}$, i.e.\,the $a(t)$ appearing therein is a delta mass at 1. Applying now~\cite[Theorem~1.1]{addario2020algorithmic}, we find that the algorithmic threshold for the REM is at 0. (We thank Brice Huang for this argument.) Now, recall from the introduction that the $m$-OGP marks the threshold at which certain  classes of algorithms break down (also see below). In light of these facts, we arrive at an interesting conclusion: as $p\to\infty$, the algorithmic threshold for the Ising $p$-spin model, as prescribed by the symmetric $m$-OGP, approaches to that of REM, namely the value zero. 

\paragraph{Algorithmic Lower Bounds} We now return to the algorithmic problem of efficiently finding a near ground-state. In the context of $p$-spin models, we consider algorithms $\A:(\R^n)^{\otimes p}\to \Sigma_n$  accepting a $\boldsymbol{J}\in (\R^n)^{\otimes p}$ with i.i.d.\,standard normal entries and a $\gamma>0$ as their inputs and returning a $\A(\boldsymbol{J}) = \bs_{\rm ALG}\in\Sigma_n $ such that $H(\bs_{\rm ALG})\ge \gamma \max_{\bs \in \Sigma_n}H(\bs)$, ideally w.h.p.\,as $n\to\infty$. The ensemble $m$-OGP established in Theorem~\ref{thm:m-ogp} is a rigorous barrier for certain powerful classes of algorithms exhibiting input stability\footnote{Informally, an algorithm $\A$ is stable if for any two inputs $\boldsymbol{J}$ and $\boldsymbol{J}'$ with a small $\|\boldsymbol{J}-\boldsymbol{J}'\|$, the outputs $\A(\boldsymbol{J})$ and $\A(\boldsymbol{J}')$ are close in the Hamming distance. For a more formal definition, see~\cite{gamarnik2021algorithmic,gamarnik2022algorithms}.}. The classes of algorithms against which the OGP is a provable barrier include low-degree polynomials~\cite{gamarnik2020low,wein2020optimal}, AMP~\cite{gamarnikjagannath2021overlap}, low-depth Boolean circuits~\cite{gamarnik2021circuit} and overlap concentrated algorithms~\cite{huang2021tight,huang2023algorithmic}. The latter class includes, in particular, $O(1)$ iterations of AMP and Langevin dynamics run for $O(1)$ time, see~\cite{huang2021tight}. It is worth noting that the best known polynomial-time algorithm for the $p$-spin model can in fact be implemented as an AMP algorithm run for $O(1)$ iterations. Using Theorem~\ref{thm:m-ogp}, one can establish that for any $\gamma>0$, there is a $P_\gamma\in\N$ such that for any $p\ge P_\gamma$, there do not exist a sufficiently stable algorithm that finds (w.h.p.) a $\bs_{\rm ALG}$ with $H(\bs_{\rm ALG})\ge \gamma\sqrt{2\ln 2}$. This can be done by directly adapting the techniques of, e.g.~\cite{gamarnik2021algorithmic,gamarnik2022algorithms}. We refer the reader to these citations
for details.

\section{Proofs}\label{sec:pfs}
In this section, we provide complete proofs of all of our main results.
\subsection{Auxiliary Results}
We collect several useful auxiliary results below.
\paragraph{Probabilistic Estimates} The first result is the well-known Gaussian tail bound: for $x>0$,
\begin{equation}\label{eq:gaussian-tail}
    \frac{\exp(-x^2/2)}{\sqrt{2\pi}}\left(\frac1x-\frac{1}{x^3}\right) \le \mathbb{P}[\cN(0,1)\ge x]\le  \frac{\exp(-x^2/2)}{x\sqrt{2\pi}}.
\end{equation}
In particular when $x=\omega_n(1)$,~\eqref{eq:gaussian-tail} yields
\[
\mathbb{P}\bigl[\cN(0,1)\ge x\bigr]  = \frac{\exp(-x^2/2)}{x\sqrt{2\pi}}(1+o_n(1)).
\]
We next record the following bivariate normal tail bound.  
\begin{lemma}\label{lemma:biv-tail}
Let $(Z,Z_\rho)$ be a bivariate normal random vector with $Z,Z_\rho\sim \cN(0,1)$ and $\mathbb{E}[Z\cdot Z_\rho]=\rho\in(-1,1)$. Then for any $t>0$, 
\[
\mathbb{P}\bigl[Z>t,Z_\rho>t\bigr]\le \frac{(1+\rho)^2}{2\pi t^2\sqrt{1-\rho^2}}\exp\left(-\frac{t^2}{1+\rho}\right).
\]
\end{lemma}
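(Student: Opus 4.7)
The plan is to bound $\mathbb{P}[Z>t,Z_\rho>t]$ by direct integration of the bivariate Gaussian density, using a translation of variables that isolates the dominant Gaussian factor $\exp(-t^2/(1+\rho))$ and reduces the remaining integral to an explicit exponential integral on the positive quadrant.

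First, I would write the joint density of $(Z,Z_\rho)$ in the standard form
\[
f(x,y) = \frac{1}{2\pi\sqrt{1-\rho^2}}\exp\left(-\frac{x^2-2\rho xy + y^2}{2(1-\rho^2)}\right),
\]
so that $\mathbb{P}[Z>t,Z_\rho>t]$ equals the integral of $f$ on $(t,\infty)\times(t,\infty)$. Then I would substitute $u = x-t,\ v = y-t$ to shift the domain of integration to the positive quadrant. Expanding the quadratic form $(t+u)^2-2\rho(t+u)(t+v)+(t+v)^2$ and dividing by $2(1-\rho^2)$ gives the clean decomposition
\[
\frac{x^2-2\rho xy + y^2}{2(1-\rho^2)} = \frac{t^2}{1+\rho} + \frac{t(u+v)}{1+\rho} + \frac{u^2+v^2-2\rho uv}{2(1-\rho^2)}.
\]
The first term is precisely the Gaussian factor appearing in the target bound and can be pulled out of the integral.

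The key step is then to upper-bound the remaining integrand by dropping the last (nonnegative) quadratic term, since for $|\rho|<1$ the form $u^2-2\rho uv+v^2$ is positive semi-definite. This reduces the inner double integral to a product of two independent one-dimensional integrals of the form $\int_0^\infty \exp(-tu/(1+\rho))\,du = (1+\rho)/t$. Multiplying everything together produces exactly the claimed upper bound $\frac{(1+\rho)^2}{2\pi t^2\sqrt{1-\rho^2}}\exp(-t^2/(1+\rho))$.

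I do not expect any serious obstacle: the entire argument is essentially a careful rearrangement of a Gaussian quadratic form together with the observation that dropping a positive semi-definite quadratic yields a tractable separable exponential integral. The only point requiring care is the algebra of the change of variables, in particular verifying the decomposition of the quadratic form (a routine but bookkeeping-heavy step) and ensuring the positivity of $u^2 + v^2 - 2\rho uv$ for $|\rho|\le 1$, which follows from the Cauchy–Schwarz inequality $2|uv|\le u^2+v^2$. Note that the bound degrades as $\rho\to 1$, consistent with the fact that for $\rho=1$ the event $\{Z>t,Z_\rho>t\}$ collapses to a univariate Gaussian tail.
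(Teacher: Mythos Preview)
Your proof is correct. The paper itself does not supply a proof of this lemma; it is merely recorded as an auxiliary tail bound in Section~\ref{sec:pfs}. Your direct-integration argument---shifting variables to the positive quadrant, verifying the decomposition
\[
\frac{x^2-2\rho xy+y^2}{2(1-\rho^2)}=\frac{t^2}{1+\rho}+\frac{t(u+v)}{1+\rho}+\frac{u^2-2\rho uv+v^2}{2(1-\rho^2)},
\]
dropping the nonnegative quadratic $u^2-2\rho uv+v^2$, and evaluating the resulting separable exponential integral---yields exactly the claimed bound, and every step is valid for $\rho\in(-1,1)$ and $t>0$.
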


Our $m$-OGP result, Theorem~\ref{thm:m-ogp}, is crucially based on the following tail bound regarding multivariate normal random vectors. It is originally due to Savage~\cite{savage1962mills}; the version we cite below is from~\cite{hashorva2003multivariate,hashorva2005asymptotics}. 
\begin{theorem}\label{thm:multiv-tail}
Let $\boldsymbol{X}\in\R^d$ be a centered multivariate normal random vector with non-singular covariance matrix $\Sigma\in\R^{d\times d}$ and $\boldsymbol{t}\in\R^d$ be a fixed threshold. Suppose that $\Sigma^{-1}\boldsymbol{t}>\boldsymbol{0}$ entrywise. Then,
\[
1-\ip{1/(\Sigma^{-1}\boldsymbol{t})}{\Sigma^{-1}(1/(\Sigma^{-1}\boldsymbol{t})} \le \frac{\mathbb{P}[\boldsymbol{X}\ge \boldsymbol{t}]}{\varphi_{\boldsymbol{X}}(\boldsymbol{t})\prod_{i\le d}\ip{e_i}{\Sigma^{-1}\boldsymbol{t}}}\le 1,
\]
where $e_i\in\R^d$ is the $i{\rm th}$ unit vector and $\varphi_{\boldsymbol{X}}(\boldsymbol{t})$ is the multivariate normal density evaluated at $\boldsymbol{t}$:
\[
\varphi_{\boldsymbol{X}}(\boldsymbol{t}) = (2\pi)^{-d/2}|\Sigma|^{-1/2}\exp\left(-\frac{\boldsymbol{t}^T\Sigma^{-1}\boldsymbol{t}}{2}\right)\in\R^+.
\]
\end{theorem}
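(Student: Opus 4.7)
The plan is to establish both bounds by first reducing the orthant probability to an integral over the positive orthant after a linear translation, and then to compare that integral with the product of one-dimensional exponential integrals $\prod_i 1/a_i$, where $\boldsymbol{a}=\Sigma^{-1}\boldsymbol{t}$. First I would write
\[
\mathbb{P}[\boldsymbol{X}\ge \boldsymbol{t}] = \int_{\boldsymbol{x}\ge \boldsymbol{t}} \varphi_{\boldsymbol{X}}(\boldsymbol{x})\,d\boldsymbol{x},
\]
translate via $\boldsymbol{x}=\boldsymbol{t}+\boldsymbol{u}$ with $\boldsymbol{u}\ge 0$, and expand
\[
\boldsymbol{x}^T \Sigma^{-1}\boldsymbol{x}=\boldsymbol{t}^T\Sigma^{-1}\boldsymbol{t}+2\boldsymbol{u}^T(\Sigma^{-1}\boldsymbol{t})+\boldsymbol{u}^T\Sigma^{-1}\boldsymbol{u}.
\]
Setting $\boldsymbol{a}=\Sigma^{-1}\boldsymbol{t}$ (entrywise positive by hypothesis) gives the key identity
\[
\frac{\mathbb{P}[\boldsymbol{X}\ge \boldsymbol{t}]}{\varphi_{\boldsymbol{X}}(\boldsymbol{t})} = \int_{\boldsymbol{u}\ge 0}e^{-\boldsymbol{a}^T\boldsymbol{u}}\,e^{-\frac{1}{2}\boldsymbol{u}^T\Sigma^{-1}\boldsymbol{u}}\,d\boldsymbol{u},
\]
which reduces the entire problem to estimating this single integral from above and below.

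For the upper bound, since $\Sigma^{-1}$ is positive definite the quadratic factor $e^{-\frac{1}{2}\boldsymbol{u}^T\Sigma^{-1}\boldsymbol{u}}$ is bounded above by $1$. Dropping it makes the remaining integral factorize on the positive orthant into $\prod_i \int_0^\infty e^{-a_iu_i}\,du_i=\prod_i 1/a_i=\prod_i 1/\ip{e_i}{\Sigma^{-1}\boldsymbol{t}}$, yielding the upper bound of $1$ in the stated ratio. The convergence here relies precisely on the positivity hypothesis $\boldsymbol{a}>0$, which is what makes the assumption $\Sigma^{-1}\boldsymbol{t}>\boldsymbol{0}$ essential rather than cosmetic.

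For the lower bound I would apply the elementary inequality $e^{-x}\ge 1-x$ to the same integrand with $x=\frac{1}{2}\boldsymbol{u}^T\Sigma^{-1}\boldsymbol{u}\ge 0$. The main term reproduces $\prod_i 1/a_i$, and the correction equals
\[
\frac{1}{2}\int_{\boldsymbol{u}\ge 0}e^{-\boldsymbol{a}^T\boldsymbol{u}}\,\boldsymbol{u}^T\Sigma^{-1}\boldsymbol{u}\,d\boldsymbol{u}.
\]
Treating $\boldsymbol{u}$ as a vector of independent exponentials $U_i\sim\mathrm{Exp}(a_i)$ (absorbing the factor $\prod_i a_i$), this correction equals $\bigl(\prod_i 1/a_i\bigr)\cdot\frac{1}{2}\mathbb{E}[\boldsymbol{U}^T\Sigma^{-1}\boldsymbol{U}]$, and standard moment computations identify $\mathbb{E}[\boldsymbol{U}^T\Sigma^{-1}\boldsymbol{U}]$ with a quadratic form in $\boldsymbol{b}=1/\boldsymbol{a}=1/(\Sigma^{-1}\boldsymbol{t})$ matching $\ip{1/(\Sigma^{-1}\boldsymbol{t})}{\Sigma^{-1}(1/(\Sigma^{-1}\boldsymbol{t}))}$. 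Dividing through by $\varphi_{\boldsymbol{X}}(\boldsymbol{t})\prod_i 1/a_i$ reads off the claimed lower bound. The main obstacle is the bookkeeping: a naive invocation of $e^{-x}\ge 1-x$ produces both off-diagonal moments $\mathbb{E}[U_iU_j]=1/(a_ia_j)$ and the slightly larger diagonal moments $\mathbb{E}[U_i^2]=2/a_i^2$, so aligning the result exactly with $\ip{\boldsymbol{b}}{\Sigma^{-1}\boldsymbol{b}}$ requires either a refined one-dimensional Taylor bound applied coordinate-by-coordinate or a single integration-by-parts that absorbs the extra diagonal contribution, following the classical Savage argument and its refinement by Hashorva.
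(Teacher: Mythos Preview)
The paper does not prove this statement; it is quoted as an auxiliary result from Savage~(1962) in the form given by Hashorva~(2003,~2005), so there is no in-paper proof to compare your proposal against.

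On its own merits, your reduction to the integral $\int_{\boldsymbol u\ge 0}e^{-\boldsymbol a^T\boldsymbol u}e^{-\frac12\boldsymbol u^T\Sigma^{-1}\boldsymbol u}\,d\boldsymbol u$ and the upper bound are correct and complete. For the lower bound, the $e^{-x}\ge 1-x$ shortcut does not produce the stated constant, as you yourself flag: with $\boldsymbol b=1/\boldsymbol a$ it gives
\[
1-\tfrac12\,\mathbb{E}\bigl[\boldsymbol U^T\Sigma^{-1}\boldsymbol U\bigr]
=1-\tfrac12\,\boldsymbol b^T\Sigma^{-1}\boldsymbol b-\tfrac12\sum_i(\Sigma^{-1})_{ii}\,b_i^2,
\]
which is neither equal to nor in general comparable with $1-\boldsymbol b^T\Sigma^{-1}\boldsymbol b$. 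The exact bound comes from the integration-by-parts route you allude to: using $\varphi_{\boldsymbol X}(\boldsymbol x)=-\partial_{x_i}\varphi_{\boldsymbol X}(\boldsymbol x)\big/(\Sigma^{-1}\boldsymbol x)_i$, one integrates by parts once in each coordinate over $\{\boldsymbol x\ge\boldsymbol t\}$, extracts the main term $\varphi_{\boldsymbol X}(\boldsymbol t)\prod_i a_i^{-1}$, and controls the $d$ remainder integrals via the already-established upper bound; summing the remainders yields exactly $\sum_{i,j}(\Sigma^{-1})_{ij}b_ib_j=\boldsymbol b^T\Sigma^{-1}\boldsymbol b$. So of the two fixes you mention at the end, it is specifically the integration-by-parts one that closes the gap, not a refined Taylor inequality.
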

We employ Theorem~\ref{thm:multiv-tail} for the case where $\Sigma$ is `close to identity'.

Several of our main results are based on the \emph{second moment method}. To that end, we recall Paley-Zygmund inequality. Let $Z$ be a random variable that is almost surely non-negative and ${\rm Var}(Z)<\infty$. Then, for any $\theta\in[0,1]$,
\begin{equation}\label{eq:paley-zygmund}
    \mathbb{P}\bigl[Z>\theta \mathbb{E}[Z]\bigr]\ge (1-\theta)^2 \frac{\mathbb{E}[Z]^2}{\mathbb{E}[Z^2]}.
\end{equation}
\paragraph{Auxiliary Results from Linear Algebra} Our $m$-OGP result, Theorem~\ref{thm:m-ogp}, requires several linear-algebraic arguments. We begin by reminding the reader the Sherman-Morrison matrix inversion formula~\cite{sherman1950adjustment}:
\begin{theorem}\label{thm:sm}
Let $A\in\R^{n\times n}$ be an invertible matrix and $u,v\in\R^n$ be column vectors. Then, $(A+uv^T)^{-1}$ exists iff $1+v^T A^{-1}u\ne 0$ and the inverse is given by the formula
\[
\bigl(A+uv^T\bigr)^{-1} = A^{-1} - \frac{A^{-1}uv^T A^{-1}}{1+v^T A^{-1}u}.
\]
\end{theorem}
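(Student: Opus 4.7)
The plan is to prove both directions by direct manipulation: verify the formula by explicit multiplication when $1+v^T A^{-1}u \ne 0$, and exhibit a nontrivial kernel vector when $1+v^T A^{-1}u = 0$. No deep tools are required; the whole argument is linear algebra.

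First I would dispense with the trivial case $u=0$, in which $A+uv^T=A$ is invertible, $1+v^T A^{-1}u = 1\ne 0$, and the claimed formula collapses to $A^{-1}$. So assume $u\ne 0$ in what follows.

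For the ``if'' direction, assume $\alpha\triangleq 1+v^T A^{-1}u\ne 0$, so the right-hand side $B\triangleq A^{-1}-\alpha^{-1}A^{-1}uv^T A^{-1}$ is well defined. I would compute the product $(A+uv^T)B$ directly. Expanding,
\[
(A+uv^T)B = I + uv^T A^{-1} - \alpha^{-1}uv^T A^{-1} - \alpha^{-1}u(v^T A^{-1}u)v^T A^{-1}.
\]
The key step is that the scalar $v^T A^{-1}u$ commutes out of the last term, giving $\alpha^{-1}u\bigl(1 + v^T A^{-1}u\bigr)v^T A^{-1} = uv^T A^{-1}$ after combining with the third term. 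The middle terms thus cancel and one is left with $I$. An analogous computation shows $B(A+uv^T)=I$, so $A+uv^T$ is invertible with inverse $B$.

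For the ``only if'' direction, suppose instead that $\alpha = 1+v^T A^{-1}u = 0$. Since $A^{-1}$ is invertible and $u\ne 0$, the vector $w\triangleq A^{-1}u$ is nonzero. Then
\[
(A+uv^T)w = AA^{-1}u + u(v^T A^{-1}u) = u + u\bigl(v^T A^{-1}u\bigr) = u\cdot\alpha = 0,
\]
so $A+uv^T$ has nontrivial kernel and is therefore singular. Combining the two directions gives the stated equivalence together with the formula. I do not expect any genuine obstacle here; the only place where one must be slightly careful is tracking that $v^T A^{-1}u$ is a scalar and can be pulled outside the outer product $u\,(\cdot)\,v^T A^{-1}$, which is what makes the cancellation in the ``if'' direction go through cleanly.
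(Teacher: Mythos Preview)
Your proof is correct. The paper, however, does not actually supply a proof of this statement: it merely cites the Sherman--Morrison formula as a classical result from \cite{sherman1950adjustment} and uses it as a black box. Your direct verification (multiply out $(A+uv^T)B$ and exhibit a kernel vector when $\alpha=0$) is the standard textbook argument and is entirely sound.
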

Finally, we record Wielandt-Hoffman inequality~\cite{hoffman1953variation}, see also~\cite[Corollary 6.3.8]{horn2012matrix}.
\begin{theorem}\label{thm:hw}
Let $A,A+E\in\R^{n\times n}$  be two symmetric matrices with respective eigenvalues
\[
\lambda_1(A)\ge \lambda_2(A)\ge\cdots\ge\lambda_n(A)\quad\text{and}\quad \lambda_1(A+E)\ge \lambda_2(A+E)\ge\cdots\ge \lambda_n(A+E).
\]
Then
\[
\sum_{1\le i\le n}\left(\lambda_i(A+E)-\lambda_i(A)\right)^2\le \|E\|_F^2.
\]
\end{theorem}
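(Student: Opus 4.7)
The plan is to use simultaneous orthogonal diagonalization combined with a doubly stochastic/rearrangement argument, which is the standard route for this classical inequality. First I would write $A = U \Lambda U^T$ and $A+E = V M V^T$, where $U,V$ are orthogonal and $\Lambda = \mathrm{diag}(\lambda_1(A),\dots,\lambda_n(A))$, $M = \mathrm{diag}(\lambda_1(A+E),\dots,\lambda_n(A+E))$ are diagonal with the eigenvalues listed in decreasing order. Using orthogonal invariance of the Frobenius norm,
\[
\|E\|_F^2 = \|V M V^T - U \Lambda U^T\|_F^2 = \|M - Q \Lambda Q^T\|_F^2, \qquad Q \triangleq V^T U,
\]
where $Q$ is orthogonal. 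Expanding and using $\mathrm{tr}(Q \Lambda Q^T \Lambda'\!{}) = \mathrm{tr}(\Lambda^2)$ for the appropriate pieces, this becomes
\[
\|E\|_F^2 = \mathrm{tr}(M^2) + \mathrm{tr}(\Lambda^2) - 2\,\mathrm{tr}\bigl(M\, Q \Lambda Q^T\bigr).
\]
On the other hand, the quantity I want to bound from above satisfies
\[
\sum_{i=1}^n \bigl(\lambda_i(A+E)-\lambda_i(A)\bigr)^2 = \mathrm{tr}(M^2) + \mathrm{tr}(\Lambda^2) - 2\sum_{i=1}^n M_{ii}\Lambda_{ii}.
\]
So the inequality reduces to proving the single trace bound
\[
\mathrm{tr}\bigl(M\, Q \Lambda Q^T\bigr) \;\le\; \sum_{i=1}^n M_{ii}\Lambda_{ii}.
\]

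The second step is to recognize the left-hand side as a bilinear form in the sorted eigenvalues, weighted by the entries of a doubly stochastic matrix. Writing $S_{ij} \triangleq Q_{ij}^2$, one gets
\[
\mathrm{tr}\bigl(M Q \Lambda Q^T\bigr) = \sum_{i,j=1}^n M_{ii}\, Q_{ij}^2\, \Lambda_{jj} = \sum_{i,j} M_{ii}\, S_{ij}\, \Lambda_{jj}.
\]
Because $Q$ is orthogonal, each row and column of $S$ sums to $1$ and all entries are nonnegative, so $S$ is doubly stochastic. By the Birkhoff--von Neumann theorem, $S = \sum_k c_k P_k$ with $c_k \ge 0$, $\sum_k c_k = 1$, and $P_k$ permutation matrices corresponding to permutations $\pi_k \in \mathfrak{S}_n$. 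Hence
\[
\sum_{i,j} M_{ii} S_{ij} \Lambda_{jj} = \sum_k c_k \sum_{i=1}^n M_{ii}\, \Lambda_{\pi_k(i),\pi_k(i)}.
\]
Since the sequences $(M_{ii})$ and $(\Lambda_{ii})$ are both arranged in decreasing order, the rearrangement inequality gives $\sum_i M_{ii}\Lambda_{\pi(i),\pi(i)} \le \sum_i M_{ii}\Lambda_{ii}$ for every permutation $\pi$. Averaging with weights $c_k$ preserves the bound, yielding the desired trace inequality and hence the theorem.

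The main obstacle, and indeed the only nontrivial ingredient, is the passage from an arbitrary orthogonal $Q$ to a convex combination of permutations, i.e.\ the Birkhoff--von Neumann step together with the rearrangement inequality. Everything else is bookkeeping with orthogonal invariance of $\|\cdot\|_F$. I would present Birkhoff--von Neumann as a black box; alternatively one could bypass it by directly optimizing $\mathrm{tr}(M Q \Lambda Q^T)$ over orthogonal $Q$ via Lagrange multipliers, but the doubly stochastic route is cleaner and keeps the argument entirely combinatorial after the initial diagonalization.
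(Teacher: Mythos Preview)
Your argument is correct and is one of the standard proofs of the Wielandt--Hoffman inequality. Note, however, that the paper does not supply its own proof of this statement: Theorem~\ref{thm:hw} is recorded as an auxiliary fact with a citation to the original paper of Hoffman and Wielandt and to Horn--Johnson, so there is nothing in the paper to compare your proof against. Your write-up would serve perfectly well as a self-contained proof in place of the citation.
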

\subsection{Proof of Proposition~\ref{prop:2nd-mom-est}}\label{sec:pf-proposition}
\begin{proof}[Proof of Proposition~\ref{prop:2nd-mom-est}]
Fix any $\epsilon\in(0,1)$ and recall $\mathcal{S}(\epsilon)$ from~\eqref{eq:s-eps}.
\paragraph{First moment estimate.} 
Fix any $\bs\in\Sigma_n$ and recall $H(\bs)\sim \cN(0,1/n)$. 
Using~\eqref{eq:gaussian-tail},  we get
\[
\mathbb{P}\bigl[H(\bs)\ge (1-\epsilon)\sqrt{2\ln 2}\bigr] = \mathbb{P}\bigl[\cN(0,1)\ge (1-\epsilon)\sqrt{2n \ln 2}\bigr]=\frac{2^{-n(1-\epsilon)^2}}{(1-\epsilon)\sqrt{4\pi n\ln 2}}\bigl(1+o_n(1)\bigr). 
\]
Hence, 
\begin{equation}\label{eq:1st-mom-est}
   \mathbb{E}[|\mathcal{S}(\epsilon)|] = \frac{2^{n-n(1-\epsilon)^2}}{(1-\epsilon)\sqrt{4\pi n\ln 2}}(1+o_n(1)).
\end{equation}

\paragraph{Second moment estimate.} Our next focus is on the second moment. Observe that
\begin{align}
    &\bigl\{\alpha : \alpha = n^{-1}\ip{\bs_1}{\bs_2}\text{ for some }\bs_1,\bs_2 \in \Sigma_n\bigr\} \nonumber
    \\
    &= \{\alpha:\alpha\in[-1,1],n-\alpha n\equiv 0\pmod{2}\}\triangleq O\label{eq:admissible-overlap}.
\end{align}
Moreover, for any $\alpha\in O$, the number of pairs $(\bs_1,\bs_2)\in\Sigma_n\times \Sigma_n$ with $n^{-1}\ip{\bs_1}{\bs_2} = \alpha$ is $2^n\cdot \binom{n}{\frac{1-\alpha}{2}n}$. Next, fix any $\bs_1,\bs_2$ such that $n^{-1}\ip{\bs_1}{\bs_2} = \alpha$. Then,
\begin{align}
    \mathbb{P}\bigl[\bs_1,\bs_2\in \mathcal{S}(\epsilon)\bigr] &= \mathbb{P}\bigl[H(\bs_1)\ge (1-\epsilon)\sqrt{2\ln 2},H(\bs_2)\ge (1-\epsilon)\sqrt{2\ln 2}\bigr] \nonumber\\
    &=\mathbb{P}\bigl[Z\ge (1-\epsilon)\sqrt{2n\ln 2},Z_\alpha\ge (1-\epsilon) \sqrt{2n\ln 2} \bigr]\triangleq p(\alpha)\label{eq:p-alpha}
\end{align}
where $Z,Z_\alpha \sim \cN(0,1)$ with $\mathbb{E}[Z\cdot Z_\alpha] =\alpha^p$. In particular, using Lemma~\ref{lemma:biv-tail}, we bound $p(\alpha)$ as
\begin{equation}\label{eq:p-alpha-up-bd}
    p(\alpha)\le \frac{1}{C_1^2 n}\cdot \frac{\left(1+\alpha^p\right)^2}{\sqrt{1-\alpha^{2p}}}\exp_2\left(-\frac{2n(1-\epsilon)^2}{1+\alpha^p}\right),
\end{equation}
where 
\begin{equation}\label{eq:C_1}
    C_1 = (1-\epsilon)\sqrt{4\pi\ln 2}.
\end{equation}
Using~\eqref{eq:p-alpha}, we obtain
\begin{equation}\label{eq:2nd-mom-1}
    \mathbb{E}[|\mathcal{S}(\epsilon)|^2] =2^n\sum_{\alpha \in O}\binom{n}{\frac{1-\alpha}{2}n}p(\alpha).
\end{equation}
We investigate~\eqref{eq:2nd-mom-1} based on size of $\alpha$.
\paragraph{Large $|\alpha|$.} Fix any arbitrary $\alpha^* = \alpha^*(\epsilon)\in(0,1)$ such that 
\begin{equation}\label{eq:alpha-star}
-1 + h\left(\frac{1-\alpha^*}{2}\right) + (1-\epsilon)^2 <0,
\end{equation}
where $h(p)=-p\log_2 p -(1-p)\log_2(1-p)$ is the binary entropy function. An $\alpha^*$ satisfying~\eqref{eq:alpha-star} indeed exists, as $\lim_{\alpha\to 1}-1 + h\left(\frac{1-\alpha}{2}\right) + (1-\epsilon)^2  = -1+(1-\epsilon)^2<0$ and $\alpha\mapsto -1 + h\left(\frac{1-\alpha}{2}\right) + (1-\epsilon)^2$ is continuous. 
Note that for any $\alpha\in O$,~\eqref{eq:gaussian-tail} yields
\begin{equation}\label{eq:trivial}
    p(\alpha)\le \mathbb{P}\bigl[\cN(0,1)\ge (1-\epsilon)\sqrt{2n\ln 2}\bigr]\le \frac{1}{C_1\sqrt{n}}2^{-n(1-\epsilon)^2}.
\end{equation}
Hence, 
\begin{align}
    &\frac{1}{\mathbb{E}[|\mathcal{S}(\epsilon)|]^2} 2^n \sum_{\alpha \in O:|\alpha|\ge \alpha^*} \binom{n}{\frac{1-\alpha}{2}n} p(\alpha) \nonumber\\
    &\le \frac{C_1^2 n}{2^{n-2n(1-\epsilon)^2}} \sum_{\alpha \in O:|\alpha|\ge \alpha^*} \binom{n}{\frac{1-\alpha}{2}n} p(\alpha)\label{eq:use-1st-mom}\\
    &\le \frac{C_1 n^{O(1)}}{2^{n-n(1-\epsilon)^2}} \binom{n}{\frac{1-\alpha^*}{2}n}\label{eq:use-trivial} \\
&=\exp_2\left(n\left(-1+h\left(\frac{1-\alpha^*}{2}\right) +(1-\epsilon)^2\right)+O(\log_2 n)\right) \label{eq:use-stirling}\\
&=e^{-\Theta(n)}\label{eq:use-a-star},
\end{align}
where~\eqref{eq:use-1st-mom} uses the first moment estimate~\eqref{eq:1st-mom-est},~\eqref{eq:use-trivial} uses~\eqref{eq:trivial},~\eqref{eq:use-stirling} follows from Stirling's approximation, $\binom{n}{n\rho} = \exp_2\bigl(nh(\rho) + O(\log_2 n)\bigr)$ valid for all $\rho\in(0,1)$, and lastly,~\eqref{eq:use-a-star} uses~\eqref{eq:alpha-star}. So,
\begin{equation}\label{eq:large-alpha-est}
    \frac{1}{\mathbb{E}[|\mathcal{S}(\epsilon)|]^2} \cdot 2^n\sum_{\alpha \in O:|\alpha|\ge \alpha^*} \binom{n}{\frac{1-\alpha}{2}n}p(\alpha) = \exp\bigl(-\Theta(n)\bigr).
\end{equation}
\paragraph{Small $|\alpha|$.}  
We now fix a $\iota\in(0,\frac12)$ and focus on $\alpha$ such that $n^{-\iota}\le |\alpha|\le \alpha^*$. We collect several auxiliary results.
\begin{lemma}\label{lem:bin-coeff-tight}
For any $1\le k\le n-1$,
\[
\binom{n}{k}\le \sqrt{\frac{n}{2\pi k(n-k)}}\exp_2\left(nh\left(\frac{k}{n}\right)\right).
\]
In particular, for $k=n\frac{1-\alpha}{2}$ with $\alpha\ne \pm 1$, we obtain
\[
\binom{n}{k}\le \sqrt{\frac{2}{\pi n(1-\alpha^2)}}\exp_2\left(nh\left(\frac{1-\alpha}{2}\right)\right).
\]
\end{lemma}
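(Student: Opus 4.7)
The plan is to deduce the inequality from a sharp form of Stirling's approximation, namely Robbins's bounds
\[
\sqrt{2\pi m}\,(m/e)^m\exp\!\left(\tfrac{1}{12m+1}\right) \le m! \le \sqrt{2\pi m}\,(m/e)^m\exp\!\left(\tfrac{1}{12m}\right)
\]
valid for every positive integer $m$. Applying the upper bound to $n!$ and the lower bound to $k!$ and $(n-k)!$ in the expression $\binom{n}{k}=n!/(k!(n-k)!)$ will yield a clean product of three pieces: a prefactor $\sqrt{n/(2\pi k(n-k))}$ coming from the square-root terms, the ratio $n^n/(k^k(n-k)^{n-k})$ coming from the $(m/e)^m$ terms, and a residual exponential correction factor.

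First, I would handle the algebraic piece by setting $p=k/n$ and observing that
\[
\log_2\frac{n^n}{k^k(n-k)^{n-k}} = -np\log_2 p - n(1-p)\log_2(1-p) = nh(k/n),
\]
so this factor is exactly $2^{nh(k/n)}$. Second, I would verify that the residual exponential correction
\[
\exp\!\left(\tfrac{1}{12n}-\tfrac{1}{12k+1}-\tfrac{1}{12(n-k)+1}\right)
\]
is at most $1$. For $1\le k\le n-1$, each of $12k+1$ and $12(n-k)+1$ is bounded above by $12n$, so each of the two subtracted terms individually exceeds $1/(12n)$, making the exponent non-positive. These two observations together give the first displayed inequality.

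The second inequality then follows by direct substitution of $k=n(1-\alpha)/2$, using $k(n-k)=n^2(1-\alpha^2)/4$ to simplify the prefactor to $\sqrt{2/(\pi n(1-\alpha^2))}$, and using $h(k/n)=h((1-\alpha)/2)$. I do not expect any genuine obstacle here; the only subtle point is the sign verification for the correction factor, which is the reason for invoking Robbins's two-sided bounds rather than the weaker asymptotic form of Stirling.
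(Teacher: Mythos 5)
Your proof is correct and follows essentially the same route as the paper: apply the two-sided Robbins/Stirling bounds to $n!$, $k!$, $(n-k)!$, extract the prefactor $\sqrt{n/(2\pi k(n-k))}$ and the entropy term $2^{nh(k/n)}$, and show the residual exponential correction is at most $1$. The only difference is in the last step: the paper writes $m!=\sqrt{2\pi m}(m/e)^m e^{z_m}$ and appeals to the (separate, cited) fact that $(z_m)$ is decreasing to conclude $z_n-z_k-z_{n-k}<0$, whereas you argue directly from the Robbins bounds that $\tfrac{1}{12n}-\tfrac{1}{12k+1}-\tfrac{1}{12(n-k)+1}<0$ since $12k+1<12n$ and $12(n-k)+1<12n$ for $1\le k\le n-1$. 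Your version is marginally more self-contained, as it does not invoke monotonicity of the Stirling correction sequence, but otherwise the two proofs are the same.
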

\begin{proof}[Proof of Lemma~\ref{lem:bin-coeff-tight}]
    The argument below is due to~\cite[Exercise~5.8] {gallager1968information}. Using Stirling's formula,
    \[
    \sqrt{2\pi n}\left(\frac{n}{e}\right)^n \exp\left(\frac{1}{12n+1}\right)<n!< \sqrt{2\pi n}\left(\frac{n}{e}\right)^n \exp\left(\frac{1}{12n}\right)
    \]
    for every $n\ge 1$. In particular,
    \[
    n! = \sqrt{2\pi n}\left(\frac{n}{e}\right)^n \exp\bigl(z_n\bigr),
    \]
    where $(z_n)_{n\ge 1}$ is a decreasing sequence with $0<z_n<\frac{1}{12n}$. Plugging this, we obtain
    \begin{align*}
            \binom{n}{k} &= \sqrt{\frac{n}{2\pi k(n-k)}}\left(\frac{n}{k}\right)^k \left(\frac{n}{n-k}\right)^{n-k}\exp\left(z_n - z_k-z_{n-k}\right) \\
            &\le \sqrt{\frac{n}{2\pi k(n-k)}}\exp_2\left(nh\left(\frac{k}{n}\right)\right),
        \end{align*}
        using the fact $z_n - z_k<0$ and $z_{n-k}>0$. 
\end{proof}
 We next recall the Taylor series for the binary entropy function in a neighborhood of $1/2$ (see e.g.~\cite[Equation~12]{ordentlich2015minimum}):
\begin{equation}\label{eq:bin-ent-taylor}
    h\left(\frac{1-\alpha}{2}\right)= 1-\frac{1}{2\ln 2}\sum_{n\ge 1}\frac{\alpha^{2n}}{n(2n-1)}.
\end{equation}
Using~\eqref{eq:bin-ent-taylor}, we obtain
\begin{equation}\label{eq:bin-ent-taylor2}
    h\left(\frac{1-\alpha}{2}\right)\le 1 -\frac{\alpha^2}{2\ln 2} - \frac{\alpha^4}{12\ln 2}.
\end{equation}
Combining Lemma~\ref{lem:bin-coeff-tight} with~\eqref{eq:bin-ent-taylor2} and recalling $\alpha^2\le (\alpha^*)^2$, we immediately obtain
\begin{equation}\label{eq:bin-coeff-upbd}
    \binom{n}{n\frac{1-\alpha}{2}}\le \sqrt{\frac{2}{\pi n(1-(\alpha^*)^2)}} \exp_2\left(n - \frac{n\alpha^2}{2\ln 2}-\frac{n\alpha^4}{12\ln 2}\right).
\end{equation}
Next, using~\eqref{eq:p-alpha-up-bd}, we obtain that for any $\alpha$ with $n^{-\iota}\le |\alpha|\le \alpha^*<1$, 
\begin{equation}\label{eq:p-alpha-up-bd3}
    p(\alpha)\le \frac{1}{C_1^2 n}\frac{4}{\sqrt{1-(\alpha^*)^{2p}}}\exp_2\left(-\frac{2n(1-\epsilon)^2}{1+\alpha^p}\right),
\end{equation}
where $C_1 = (1-\epsilon)\sqrt{4\pi \ln 2}$ per~\eqref{eq:C_1}. We now combine~\eqref{eq:1st-mom-est},~\eqref{eq:bin-coeff-upbd}, and~\eqref{eq:p-alpha-up-bd3} to conclude
\begin{align}\label{eq:auxil-bd3}
    \frac{1}{\mathbb{E}[|\mathcal{S}(\epsilon)|]^2} \cdot 2^n\binom{n}{n\frac{1-\alpha}{2}}p(\alpha)\le C(\alpha^*,p)\frac{1+o_n(1)}{\sqrt{n}}\exp_2\left(-\frac{n\alpha^2}{2\ln 2}-\frac{n\alpha^4}{12\ln 2} + \frac{2n(1-\epsilon)^2 \alpha^p}{1+\alpha^p}\right),
\end{align}
where 
\begin{equation}\label{eq:C-zeta-p}
    C(\alpha^*,p) = \frac{4}{\sqrt{1-(\alpha^*)^{2p}}}\cdot \sqrt{\frac{2}{\pi (1-(\alpha^*)^2)}}.
\end{equation}
We next establish the following lemma.
\begin{lemma}\label{lemma:neg}
Fix $\alpha^*>0$ and take $p$ such that
\begin{equation}\label{eq:p-sat-this}
    p\ge \ln\left(\frac{24\ln 2}{(\alpha^*)^4}\right)\cdot \ln\left(\frac{1}{\alpha^*}\right)^{-1}.
\end{equation}
Then for every $-\alpha^*\le \alpha\le \alpha^*$ and every $0<\epsilon\leq 1$,
    \[
    -\frac{\alpha^4}{12\ln 2} + \frac{2(1-\epsilon)^2\alpha^p}{1+\alpha^p}\le 0.
    \]
\end{lemma}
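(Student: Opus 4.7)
My plan is to split the analysis into two cases based on the sign of $\alpha^p$; once the sign is fixed, both terms admit a uniform handling. Since $|\alpha|\le \alpha^*<1$, we have $|\alpha^p|<1$ and hence $1+\alpha^p>0$ throughout, so the denominator is harmless and the sign of the second summand is governed by its numerator.

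The trivial case is $\alpha^p\le 0$, which happens precisely when $\alpha<0$ and $p$ is odd. Here $-\alpha^4/(12\ln 2)\le 0$ is automatic, and the second summand has a nonpositive numerator and positive denominator, so both summands are $\le 0$ and the conclusion is immediate (for any $\epsilon\in(0,1]$).

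The substantive case is $\alpha^p\ge 0$ with $\alpha\ne 0$, in which $\alpha^p=|\alpha|^p$. Using $1+\alpha^p\ge 1$ and $(1-\epsilon)^2\le 1$, the task reduces to proving
\[
2|\alpha|^p\le \frac{|\alpha|^4}{12\ln 2},
\]
i.e., $|\alpha|^{p-4}\le 1/(24\ln 2)$. The hypothesis on $p$ rearranges as $(p-4)\ln(1/\alpha^*)\ge \ln(24\ln 2)$, which in particular forces $p>4$ (since $24\ln 2>1$, the right-hand side is positive). The map $x\mapsto x^{p-4}$ is then strictly increasing on $[0,\alpha^*]$, so $|\alpha|^{p-4}\le (\alpha^*)^{p-4}$, and the desired bound $(\alpha^*)^{p-4}\le 1/(24\ln 2)$ is precisely the hypothesis after exponentiating. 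I do not foresee any serious obstacle: the argument is a sign analysis followed by a one-line reduction, and the only mild subtlety is observing that the hypothesis on $p$ automatically enforces $p>4$ so that the estimate $(\alpha^*)^{p-4}\to 0$ is genuinely in force.
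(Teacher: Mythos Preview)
Your proof is correct and follows essentially the same approach as the paper: both case-split on the sign of $\alpha^p$, dispose of the nonpositive case trivially via $1+\alpha^p>0$, and in the positive case use $(1-\epsilon)^2/(1+\alpha^p)\le 1$ together with the hypothesis on $p$ (rewritten as $(\alpha^*)^{p-4}\le 1/(24\ln 2)$) to obtain $\alpha^4\ge 24|\alpha|^p\ln 2$. The paper's final line factors out $|\alpha|^p$ whereas you reduce directly to $|\alpha|^{p-4}\le 1/(24\ln 2)$, but these are cosmetic rearrangements of the same computation.
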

\begin{proof}
    Note that the claim is immediate if $\alpha^p\le 0$ as $1+\alpha^p>0$, so assume $\alpha^p = |\alpha|^p>0$. Moreover, \eqref{eq:p-sat-this} yields $(\alpha^*)^{-p+4}\ge 24\ln 2$. Next, as
 $0\le |\alpha|\le (\alpha^*)$, we obtain
    \[
    \left(\frac{1}{|\alpha|}\right)^{p-4} \ge (\alpha^*)^{-p+4}\ge 24\ln 2 \implies \alpha^4 \ge 24|\alpha|^p\ln 2.
    \]
    From here, we establish Lemma~\ref{lemma:neg} as
    \[
    -\frac{\alpha^4}{12\ln 2}+\frac{2(1-\epsilon)^2\alpha^p}{1+\alpha^p}\le |\alpha|^p\left(-2+\frac{2(1-\epsilon)^2}{1+|\alpha|^p}\right)\le 0.
    \]
\end{proof}
Combining Lemma~\ref{lemma:neg} with~\eqref{eq:auxil-bd3}, we obtain that provided $p$ satisfies~\eqref{eq:p-sat-this}, we have
\begin{equation}\label{eq:auxil-bd4}
    \frac{1}{\mathbb{E}[|\mathcal{S}(\epsilon)|]^2} \cdot 2^n\binom{n}{n\frac{1-\alpha}{2}}p(\alpha)\le C(\alpha^*,p) \frac{1+o_n(1)}{\sqrt{n}}\exp\left(-\frac{n\alpha^2}{2}\right).
\end{equation}
Now, suppose $n^{-\iota}\le |\alpha|\le \alpha^*$, so that $n^{-2\iota}\le \alpha^2 \le (\alpha^*)^2$. Then, 
\begin{equation}\label{eq:auxil-bd5}
    \exp\left(-\frac{n\alpha^2}{2}\right)\le \exp\left(-\frac12 n^{1-2\iota}\right).
\end{equation}
Lastly, we recall $|O| = n^{O(1)}$ for the set $O$ defined in~\eqref{eq:admissible-overlap}. Equipped with this, we combine~\eqref{eq:auxil-bd4} and~\eqref{eq:auxil-bd5} to obtain
\begin{align}
    \frac{1}{\mathbb{E}[|\mathcal{S}(\epsilon)|]^2}\cdot 2^n\sum_{\substack{\alpha \in O\\ n^{-\iota}\le |\alpha|\le \alpha^*}}\binom{n}{n\frac{1-\alpha}{2}}p(\alpha)&\le 2C(\alpha^*,p)\cdot n^{O(1)} \exp\left(-\Theta(n^{1-2\iota})\right) 
    \nonumber  \\
    &\le \exp\left(-\Theta\left(n^{1-2\iota}\right)\right)\label{eq:small-alpha-est}. 
\end{align}

\paragraph{Vanishing $|\alpha|$.} Our last focus is on $\alpha$ with $-n^{-\iota}\le \alpha\le n^{-\iota}$. We will establish that the `dominant' contribution to~\eqref{eq:2nd-mom-1} comes from such $\alpha$. We collect several estimates. Using~\eqref{eq:p-alpha-up-bd}, we get
\begin{equation}\label{eq:p-alpha-vanishing-O}
  p(\alpha)\le \frac{1}{C_1^2 n} \cdot \frac{(1+n^{-\iota p})^2}{\sqrt{1-n^{-2\iota p}}}\exp_2\left(-\frac{2n(1-\epsilon)^2}{1+\alpha^p}\right).
\end{equation}
Next, we show
\begin{equation}\label{eq:auxil-bd2}
    \sup_{\alpha \in[-n^{-\iota},n^{-\iota}]} \frac{\alpha^p}{1+\alpha^p} \le n^{-\iota p}.
\end{equation}
If $\alpha^p<0$ the using $1+\alpha^p\ge 0$, we have $\frac{\alpha^p}{1+\alpha^p}\le 0$. On the other hand, if $\alpha^p\ge 0$ then $\alpha^p = |\alpha|^p\le n^{-\iota p}$, which together with the fact $t\mapsto \frac{t}{t+1}$ is increasing on $[0,\infty)$ yields $\frac{\alpha^p}{1+\alpha^p}\le \frac{n^{-\iota p}}{1+n^{-\iota p}}\le n^{-\iota p}$. These facts collectively establish~\eqref{eq:auxil-bd2}.

Next, for $C_1=(1-\epsilon)\sqrt{4\pi \ln 2}$, we have
\begin{align}
    &\frac{1}{\mathbb{E}[|\mathcal{S}(\epsilon)|]^2} \cdot 2^n\sum_{\alpha \in O\cap [-n^{-\iota},n^{-\iota}]}\binom{n}{n\frac{1-\alpha}{2}} p(\alpha) \nonumber\\
    &\le\frac{C_1^2n(1+o_n(1))}{2^{2n - 2n(1-\epsilon)^2}}2^n \frac{1}{C_1^2 n} \cdot \frac{(1+n^{-\iota p})^2}{\sqrt{1-n^{-2\iota p}}}\sum_{\alpha \in O\cap [-n^{-\iota},n^{-\iota}]}\binom{n}{n\frac{1-\alpha}{2}}\exp_2\left(-\frac{2n(1-\epsilon)^2}{1+\alpha^p}\right) \label{eq:auxil-111}\\
    &\le (1+o_n(1))\frac{(1+n^{-\iota p})^2}{\sqrt{1-n^{-2\iota p}}} \sum_{\alpha \in O\cap [-n^{-\iota},n^{-\iota}]}\binom{n}{n\frac{1-\alpha}{2}}\exp_2\left(-n+\frac{2n\alpha^p(1-\epsilon)^2}{1+\alpha^p}\right)\label{eq:auxil-222}\\
    &\le (1+o_n(1))\frac{(1+n^{-\iota p})^2}{\sqrt{1-n^{-2\iota p}}}\exp_2\left(2(1-\epsilon)^2 n^{1-\iota p}\right) \sum_{\alpha \in O\cap [-n^{-\iota},n^{-\iota}]}2^{-n}\binom{n}{n\frac{1-\alpha}{2}} \label{eq:auxil-3} \\
    &\le (1+o_n(1))\frac{(1+n^{-\iota p})^2}{\sqrt{1-n^{-2\iota p}}}\exp_2\left(2(1-\epsilon)^2 n^{1-\iota p}\right).\label{eq:auxil-333}
\end{align}
Here,~\eqref{eq:auxil-111} is obtained by combining $\mathbb{E}[|\mathcal{S}(\epsilon)|]$ per~\eqref{eq:1st-mom-est} and the upper bound~\eqref{eq:p-alpha-vanishing-O} on $p(\alpha)$;~\eqref{eq:auxil-222} follows from simple algebra; ~\eqref{eq:auxil-3}  follows from~\eqref{eq:auxil-bd2}; and~\eqref{eq:auxil-333} follows from the fact
\[
\sum_{\alpha \in O\cap [-n^{-\iota},n^{-\iota}]}2^{-n}\binom{n}{n\frac{1-\alpha}{2}}\le 1.
\]
Now, provided $p>\frac{1}{\iota}$, we have $n^{1-\iota p} = n^{-\Theta(1)}=o_n(1)$. This, together with~\eqref{eq:auxil-333} yields 
\begin{equation}\label{eq:vanishing-alpha-est}
    \frac{1}{\mathbb{E}[|\mathcal{S}(\epsilon)|]^2} \cdot 2^n\sum_{\alpha \in O\cap [-n^{-\iota},n^{-\iota}]} \binom{n}{n\frac{1-\alpha}{2}}p(\alpha) \le 1+o_n(1),
\end{equation}
for $p>1/\iota$.
\paragraph{Combining everything.} Note that $\mathbb{E}[|\mathcal{S}(\epsilon)|^2] \ge \mathbb{E}[|\mathcal{S}(\epsilon)|]^2$ by Jensen's inequality. Moreover, provided 
\[
p\ge \max\left\{\ln\left(\frac{24\ln 2}{(\alpha^*)^4}\right)\cdot \ln\left(\frac{1}{\alpha^*}\right)^{-1},\frac1\iota\right\},
\]
we obtain by combining~\eqref{eq:large-alpha-est},
~\eqref{eq:small-alpha-est} and~\eqref{eq:vanishing-alpha-est} that
\begin{align*}
    \mathbb{E}[|\mathcal{S}(\epsilon)|^2]&\le \mathbb{E}[|\mathcal{S}(\epsilon)|]^2\left(e^{-\Theta(n)} + e^{-\Theta(n^{1-2\iota})} + 1+o_n(1)\right)\\
    &=\mathbb{E}[|\mathcal{S}(\epsilon)|]^2(1+o_n(1)).
\end{align*}
This completes the proof of Proposition~\ref{prop:2nd-mom-est}.

\end{proof}
\subsection{Proof of Corollary~\ref{thm:ground-state-val}}\label{pf:ground-state-val}
Fix any $\epsilon>0$. Using Proposition~\ref{prop:2nd-mom-est}, we obtain that there exists a $P$ such that the following holds. Fix any $p\ge P$. Then,
\[
\mathbb{E}\bigl[|\mathcal{S}(\epsilon)|^2\bigr] =(1+o_n(1)) \mathbb{E}\bigl[|\mathcal{S}(\epsilon)|\bigr]^2.
\]
Using Paley-Zygmund inequality~\eqref{eq:paley-zygmund}, we get
\[
\mathbb{P}\bigl[\mathcal{S}(\epsilon)\ne \varnothing\bigr] = \mathbb{P}\bigl[|\mathcal{S}(\epsilon)|>0\bigr]\ge \frac{\mathbb{E}\bigl[|\mathcal{S}(\epsilon)|\bigr]^2}{\mathbb{E}\bigl[|\mathcal{S}(\epsilon)|^2\bigr]} = 1-o_n(1).
\]
Hence,
\begin{equation}\label{eq:gs-low-bd}
\mathbb{P}\left[\max_{\bs\in\Sigma_n}H(\bs)\ge (1-\epsilon)\sqrt{2\ln 2}\right] \ge \mathbb{P}\bigl[\mathcal{S}(\epsilon)\ne\varnothing\bigr] = 1-o_n(1).
\end{equation}
As for the upper bound, we have
\begin{align}
    \mathbb{E}\left[\left|\left\{\bs \in\Sigma_n:H(\bs)\ge \sqrt{2\ln 2}\right\}\right|\right]&=2^n \mathbb{P}\bigl[\cN(0,1)\ge \sqrt{2n\ln 2}\bigr] \label{eq:hamilton-is-gauss}\\
    &\le 2^n \frac{1}{\Theta(\sqrt{n})}\exp\left(-n\ln 2\right) \label{eq:gs-gs-tail}\\
    &=\Theta\left(\frac{1}{\sqrt{n}}\right)\label{eq:gs-up-bd}
\end{align}
where~\eqref{eq:hamilton-is-gauss} uses the linearity of expectation together with the fact $H(\bs)\sim \cN(0,n^{-1})$ for any $\bs\in\Sigma_n$,~\eqref{eq:gs-gs-tail} uses the Gaussian tail bound~\eqref{eq:gaussian-tail}. 

Combining~\eqref{eq:gs-low-bd} and~\eqref{eq:gs-up-bd} via a union bound, we establish Corollary~\ref{thm:ground-state-val}.
\subsection{Proof of Proposition~\ref{prop:ogp-implies-clustering}}\label{sec:pf-ogp-implies-clustering}
Define a relation $\sim$ on $A$ such that $\bs\sim \bs’$ iff $n^{-1}d_H(\bs,\bs’)\le \nu_1$. We verify that $\sim$ is an equivalence relation, which partitions $A$ into disjoint equivalence classes. Denoting them by $\mathcal{C}_i$, we then establish Proposition~\ref{prop:ogp-implies-clustering}. note that $\sim$ is clearly symmetric and reflexive, hence it suffices to establish its transitivity. To that end, take any $\bs_1,\bs_2,\bs_3\in A$ with $\bs_1\sim \bs_2$ and $\bs_2\sim \bs_3$. We then have $n^{-1}d_H(\bs_1,\bs_2)\le \nu_1$ and $n^{-1}d_H(\bs_2,\bs_3)\le \nu_1$. So, $n^{-1}d_H(\bs_1,\bs_3)\le 2\nu_1<\nu_2$ by the triangle inequality. Since the set $A$ exhibits $(\nu_1,\nu_2)$-OGP, it follows that $n^{-1}d_H(\bs_1,\bs_3)\le \nu_1$, so $\bs_1\sim \bs_3$, as claimed.
\subsection{Proof of Theorem~\ref{thm:clustered}}\label{sec:pf-clustered}
Towards Theorem~\ref{thm:clustered}, we establish several auxiliary results. For any fixed $0<\nu_1<\nu_2<\frac12$, denote by $S(\nu_1,\nu_2,\epsilon)$ the set of all pairs $(\bs_1,\bs_2)\in \mathcal{S}(\epsilon)\times \mathcal{S}(\epsilon)$ such that $n^{-1}d_H(\bs_1,\bs_2)\in[\nu_1,\nu_2]$.

Our first auxiliary result shows that $\mathcal{S}(\epsilon)$ exhibits $(\nu_1,\nu_2)$-OGP for suitable $\nu_1,\nu_2$.
\begin{proposition}\label{prop:2-ogp}
    For any $\epsilon\in(0,1-1/\sqrt{2})$, there exists $P_{\rm OGP}\in\mathbb{N}$, $0<\nu_1<\frac{\nu_2}{2}<\frac14$ such that for any $p\ge P_{\rm OGP}$, 
    \[
\mathbb{P}\bigl[S(\nu_1,\nu_2,\epsilon)=\varnothing\bigr] = 1-\exp\bigl(-\Theta(n)\bigr).
    \]
That is, for any $p\ge P_{\rm OGP}$, $\mathcal{S}(\epsilon)$ exhibits $(\nu_1,\nu_2)$-OGP w.p.\,at least $1-\exp(-\Theta(n))$.
\end{proposition}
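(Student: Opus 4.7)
The plan is to apply the \emph{first moment method} directly to $|S(\nu_1,\nu_2,\epsilon)|$ and show the expectation is exponentially small in $n$, from which Markov's inequality finishes the argument. Throughout, we parameterize pairs by their overlap: for $(\bs_1,\bs_2)\in\Sigma_n\times\Sigma_n$, write $\alpha=n^{-1}\ip{\bs_1}{\bs_2}$, so that $n^{-1}d_H(\bs_1,\bs_2)=(1-\alpha)/2$. The constraint $n^{-1}d_H\in[\nu_1,\nu_2]$ becomes $\alpha\in[1-2\nu_2,1-2\nu_1]$, and we note that with $\nu_1<\nu_2/2<1/4$ this is a subinterval of $(1/2,1)$ bounded away from $1$.

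As in the proof of Proposition~\ref{prop:2nd-mom-est},
\[
\mathbb{E}\bigl[|S(\nu_1,\nu_2,\epsilon)|\bigr] = 2^n \sum_{\alpha \in O \cap [1-2\nu_2,\,1-2\nu_1]} \binom{n}{\tfrac{1-\alpha}{2}n}\,p(\alpha),
\]
where $p(\alpha)$ is defined in~\eqref{eq:p-alpha}. Using Lemma~\ref{lemma:biv-tail} together with Stirling's approximation (as packaged in Lemma~\ref{lem:bin-coeff-tight}), the summand is bounded above by
\[
\exp_2\!\left(n\left[1 + h\!\left(\tfrac{1-\alpha}{2}\right) - \frac{2(1-\epsilon)^2}{1+\alpha^p}\right] + O(\log_2 n)\right),
\]
uniformly over $\alpha$ in the relevant range, with prefactors that are $n^{O(1)}$ since $|\alpha|\le 1-2\nu_1$ stays bounded away from $1$. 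The set $O$ contains at most $n+1$ values, so it suffices to show the bracketed exponent is strictly negative, uniformly in $\alpha\in[1-2\nu_2,1-2\nu_1]$, for all large enough $p$.

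The key observation is that $\epsilon<1-1/\sqrt 2$ gives $2(1-\epsilon)^2-1>0$. Since $h$ is continuous and $h(0)=0$, we may fix $\nu_2\in(0,1/2)$ small enough that
\[
h(\nu_2) < 2(1-\epsilon)^2 - 1,
\]
and then choose any $\nu_1\in(0,\nu_2/2)$. For $\alpha\in[1-2\nu_2,1-2\nu_1]$ we have $(1-\alpha)/2\in[\nu_1,\nu_2]\subset[0,1/2]$, where $h$ is increasing, so $h((1-\alpha)/2)\le h(\nu_2)$. Moreover $|\alpha|\le 1-2\nu_1$, so $|\alpha|^p\le(1-2\nu_1)^p\to 0$ as $p\to\infty$, and hence $2(1-\epsilon)^2/(1+\alpha^p)\to 2(1-\epsilon)^2$ uniformly in $\alpha$ on this interval. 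Thus there exists $P_{\rm OGP}=P_{\rm OGP}(\epsilon)\in\mathbb{N}$ such that for all $p\ge P_{\rm OGP}$ and all admissible $\alpha$,
\[
1 + h\!\left(\tfrac{1-\alpha}{2}\right) - \frac{2(1-\epsilon)^2}{1+\alpha^p} \;\le\; 1 + h(\nu_2) - 2(1-\epsilon)^2 + o_p(1) \;<\; -\delta
\]
for some $\delta=\delta(\epsilon,\nu_2)>0$. Consequently $\mathbb{E}[|S(\nu_1,\nu_2,\epsilon)|]\le n^{O(1)}\,e^{-\delta n}=e^{-\Theta(n)}$, and Markov's inequality yields $\mathbb{P}[S(\nu_1,\nu_2,\epsilon)\ne\varnothing]\le e^{-\Theta(n)}$, as required.

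The main point to be careful about is the uniformity in $\alpha$ of the $1/(1+\alpha^p)$ term: since $\alpha$ is positive and bounded away from $1$ throughout $[1-2\nu_2,1-2\nu_1]$, the correction $\alpha^p$ can be made uniformly small by choosing $p$ large, which is precisely where the dependence $P_{\rm OGP}=P_{\rm OGP}(\epsilon)$ enters. The condition $\epsilon<1-1/\sqrt 2$ is exactly what is needed to make the leading exponent $1+h(\nu_2)-2(1-\epsilon)^2$ strictly negative for some admissible $\nu_2$, which in turn forces $\nu_1<\nu_2/2<1/4$.
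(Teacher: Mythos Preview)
Your proof is correct and follows essentially the same approach as the paper: a first moment bound on $|S(\nu_1,\nu_2,\epsilon)|$, combining the counting estimate $2^n\binom{n}{\frac{1-\alpha}{2}n}\le\exp_2(n+nh(\nu_2)+O(\log_2 n))$ with the bivariate Gaussian tail bound (Lemma~\ref{lemma:biv-tail}), then choosing $\nu_2$ small enough that $1+h(\nu_2)-2(1-\epsilon)^2<0$ and $p$ large enough that the $\alpha^p$ correction is negligible. The only cosmetic difference is that the paper fixes $\nu_1=\nu_2/3$ and bounds $\tfrac{1}{1+\alpha^p}\ge 1-(1-2\nu_1)^p$ explicitly, whereas you allow any $\nu_1\in(0,\nu_2/2)$ and phrase the same estimate as uniform convergence; the arguments are equivalent.
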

\begin{proof}[Proof of Proposition~\ref{prop:2-ogp}]
    Fix  $0<\epsilon<1-\frac{1}{\sqrt{2}}$. For $0<\nu_1<\nu_2<\frac12$ to be tuned,  we control $\mathbb{E}[|S(\nu_1,\nu_2,\epsilon)|]$.
    \paragraph{Counting term.} Note that,
    \begin{align}
        \Bigl|\bigl\{(\bs_1,\bs_2)\in\Sigma_n^2 : n^{-1}d_H(\bs_1,\bs_2)\in[\nu_1,\nu_2]\bigr\}\Bigr| &\le 2^n\sum_{k\in[n\nu_1,n\nu_2]\cap\mathbb{Z}}\binom{n}{k}\nonumber \\
        &\le \exp_2\Bigl(n+nh(\nu_2)+O(\log_2 n)\Bigr) \label{eq:count-2-ogp}.
    \end{align}
    \paragraph{Probability estimate.} Fix any $\bs_1,\bs_2\in\Sigma_n$ with $n^{-1}d_H(\bs_1,\bs_2)\in[\nu_1,\nu_2]$. We now bound the probability of the event $H(\bs_1),H(\bs_2)\ge (1-\epsilon)\sqrt{2\ln 2}$. Let $Z_i = \sqrt{n}H(\bs_i)$. Clearly $Z_i \sim \cN(0,1)$. Moreover, $\mathbb{E}[Z_1Z_2] = \mathcal{R}(\bs_1,\bs_2)^p$, where $\mathcal{R}(\bs_1,\bs_2) = n^{-1}\ip{\bs_1}{\bs_2}\in[1-2\nu_2,1-2\nu_1]$. Applying Lemma~\ref{lemma:biv-tail} with $t=(1-\epsilon)\sqrt{2n \ln 2}$, 
    \begin{align}
        \mathbb{P}\bigl[\min\{Z_1,Z_2\}\ge (1-\epsilon)\sqrt{2n\ln 2}\bigr]&\le \frac{(1+\mathcal{R}(\bs_1,\bs_2)^p)^2}{4\pi n(1-\epsilon)^2\ln 2}\exp_2\left(-\frac{2n(1-\epsilon)^2}{1+\mathcal{R}(\bs_1,\bs_2)^p}\right)\nonumber\\
        &\le \exp_2\left(-\frac{2n(1-\epsilon)^2}{1+(1-2\nu_1)^p}+O(\log_2 n)\right)\nonumber \\
        &\le \exp_2\left(-2n(1-\epsilon)^2\left(1-(1-2\nu_1)^p\right)+O(\log_2 n)\right)\nonumber
    \end{align}
    using the fact $\mathcal{R}(\bs_1,\bs_2)\in[1-2\nu_2,1-2\nu_1]$ with $\nu_2<\frac12$ and the trivial bound $1-x\le \frac{1}{1+x}$ valid for $x\ge -1$. Since $\bs_1,\bs_2$ are arbitrary, we obtain
    \begin{equation}\label{eq:prob-2ogp}
        \sup_{\substack{\bs_1,\bs_2\in \Sigma_n \\ n^{-1}d_H(\bs_1,\bs_2)\in[\nu_1,\nu_2]}}\mathbb{P}\bigl[\bs_1,\bs_2\in\mathcal{S}(\epsilon)\bigr]\le \exp_2\Bigl(-2n(1-\epsilon)^2\left(1-(1-2\nu_1)^p\right)+O(\log_2 n)\Bigr).
    \end{equation}
    \paragraph{Combining everything.} We now set $\nu_1 = \frac{\nu_2}{3}$, which automatically satisfies $2\nu_1<\nu_2$. With this, 
    \begin{align}\label{eq:first-mom-2ogp}
        \mathbb{E}[|S(\nu_1,\nu_2,\epsilon)|] \le \exp_2\Bigl(n\bigl(1+h(\nu_2)-2(1-\epsilon)^2 + (1-2\nu_2/3)^p\bigr)+O(\log_2 n)\Bigr).
    \end{align}
    As $0<\epsilon<1-\frac{1}{\sqrt{2}}$, there exists a $\nu_2\in(0,\frac12)$ and a $P_{\rm OGP}$ such that for all $p\ge P_{\rm OGP}$, 
    \[
    1+h(\nu_2)-2(1-\epsilon)^2 + (1-2\nu_2/3)^p<0.
    \]
    Consequently, $\mathbb{E}[|S(\nu_1,\nu_2,\epsilon)|]\le \exp_2\bigl(-\Theta(n)\bigr)$, so we conclude by Markov's inequality. 
\end{proof}
We next upper bound the expected number of solutions at distance at most $\nu_1$.
\begin{proposition}\label{prop:cluster-size}
    Fix any $\nu_1\in(0,\frac12)$. Then, as $n\to\infty$,
    \[
    \mathbb{E}\bigl[|S(0,\nu_1,\epsilon)|\bigr]\le \exp_2\bigl(cn + O(\log_2 n)\bigr),
    \]
    where
    \[
    c = \max\left\{1+h(\delta) -(1-\epsilon)^2,1+h(\nu_1)-2(1-\epsilon)^2+(1-2\delta)^p\right\}
    \]
    for any $0<\delta<\nu_1$.
\end{proposition}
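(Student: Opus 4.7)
The plan is a two-regime first moment calculation, splitting the count by Hamming distance. Fix any $\delta\in(0,\nu_1)$ and decompose
\[
S(0,\nu_1,\epsilon) = S(0,\delta,\epsilon)\,\cup\, S(\delta,\nu_1,\epsilon).
\]
By the union bound and $\log_2(a+b)\le 1+\max(\log_2 a,\log_2 b)$, the claimed exponent $c$ is just the larger of the two exponents obtained for each piece. The reason we must split is that when the pair is near-diagonal the Gaussian correlation $\mathcal{R}(\bs_1,\bs_2)^p$ is close to $1$ and the bivariate tail degenerates to the single-point tail, so counting alone must control that regime.

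For the close regime $S(0,\delta,\epsilon)$, I would count crudely and drop the correlation entirely. The number of ordered pairs $(\bs_1,\bs_2)\in \Sigma_n^2$ with $d_H(\bs_1,\bs_2)\le \delta n$ is
\[
2^n\sum_{k\le \delta n}\binom{n}{k}\le \exp_2\bigl(n+nh(\delta)+O(\log_2 n)\bigr)
\]
by Stirling, using $\delta<\nu_1<1/2$. Since $H(\bs_1)\sim\mathcal{N}(0,1/n)$, the Gaussian tail bound~\eqref{eq:gaussian-tail} gives
\[
\mathbb{P}\bigl[\bs_1,\bs_2\in\mathcal{S}(\epsilon)\bigr]\le \mathbb{P}\bigl[\bs_1\in\mathcal{S}(\epsilon)\bigr]\le \exp_2\bigl(-n(1-\epsilon)^2+O(\log_2 n)\bigr).
\]
Multiplying yields $\mathbb{E}[|S(0,\delta,\epsilon)|]\le \exp_2\bigl(n(1+h(\delta)-(1-\epsilon)^2)+O(\log_2 n)\bigr)$, the first argument of the max.

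For the far regime $S(\delta,\nu_1,\epsilon)$, I would replay the argument of Proposition~\ref{prop:2-ogp} verbatim, with $(\nu_1,\nu_2)$ there replaced by $(\delta,\nu_1)$ here. The pair count is at most $\exp_2(n+nh(\nu_1)+O(\log_2 n))$ by monotonicity of $h$ on $[0,1/2]$. For any such pair the overlap satisfies $\mathcal{R}(\bs_1,\bs_2)\in[1-2\nu_1,1-2\delta]$, so $\mathcal{R}(\bs_1,\bs_2)^p\le(1-2\delta)^p$. Applying Lemma~\ref{lemma:biv-tail} with $t=(1-\epsilon)\sqrt{2n\ln 2}$ together with $\frac{1}{1+y}\ge 1-y$ for $y\in[0,1]$ yields a two-point tail of order $\exp_2\bigl(-2n(1-\epsilon)^2(1-(1-2\delta)^p)+O(\log_2 n)\bigr)$. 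The product of count and probability gives the second candidate exponent; the leading constant $2(1-\epsilon)^2\le 2$ in front of $(1-2\delta)^p$ that emerges from the calculation is harmless and can be absorbed into the stated form by a slight shrinking of $\delta$ (the proposition allows any $\delta\in(0,\nu_1)$, and $(1-2\delta)^p$ is strictly decreasing in $\delta$).

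I do not anticipate any real obstacle: both ingredients (Stirling for pair counts and Lemma~\ref{lemma:biv-tail} for the bivariate Gaussian tail) have already been deployed in Proposition~\ref{prop:2-ogp}, and the only novelty is introducing the auxiliary threshold $\delta$ to separate the two regimes. The mild bookkeeping surrounding the constant in front of $(1-2\delta)^p$ is the sole place where one must be slightly careful.
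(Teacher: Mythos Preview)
Your approach is essentially identical to the paper's: split at an auxiliary threshold $\delta$, use the crude one-point Gaussian tail on the near-diagonal piece $S(0,\delta,\epsilon)$, and replay the bivariate-tail argument of Proposition~\ref{prop:2-ogp} on $S(\delta,\nu_1,\epsilon)$.

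One remark on the constant. You correctly observe that the calculation produces $2(1-\epsilon)^2(1-2\delta)^p$ rather than $(1-2\delta)^p$; the paper's own proof (and indeed already~\eqref{eq:first-mom-2ogp}) silently writes the latter, so this is a shared sloppiness in the statement, not a defect of your argument. Your proposed fix by ``a slight shrinking of $\delta$'' does not actually recover the coefficient~$1$: shrinking $\delta$ improves the first term $1+h(\delta)-(1-\epsilon)^2$ but \emph{worsens} $(1-2\delta)^p$, so you cannot match both branches of the max simultaneously when $2(1-\epsilon)^2>1$. This is harmless downstream---the only use of Proposition~\ref{prop:cluster-size} is in the proof of Theorem~\ref{thm:clustered}, where the $(1-2\delta)^p$ term is killed by sending $p$ large, so any bounded constant in front of it is immaterial---but strictly speaking what both you and the paper prove is the bound with $2(1-\epsilon)^2(1-2\delta)^p$ in the second branch.
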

\begin{proof}[Proof of Proposition~\ref{prop:cluster-size}]
    Fix any $\delta\in(0,\nu_1)$ and observe that
    \begin{equation}\label{eq:up-bd-auxil}
        |S(0,\nu_1,\epsilon)| \le |S(0,\delta,\epsilon)|+ |S(\delta,\nu_1,\epsilon)|.
        \end{equation}
    Note that the argument leading to~\eqref{eq:first-mom-2ogp} remains the same, hence
\begin{equation}\label{eq:2nd-part}
    \mathbb{E}\bigl[|S(\delta,\nu_1,\epsilon)|\bigr]\le \exp_2\Bigl(n\bigl(1+h(\nu_1)-2(1-\epsilon)^2 +(1-2\delta)^p\bigr)+O(\log_2 n)\Bigr).
\end{equation}
We next estimate $\mathbb{E}[|S(0,\delta,\epsilon)|]$. Note that the number of pairs $(\bs_1,\bs_2)$ with $n^{-1}d_H(\bs_1,\bs_2)\in[0,\delta]$ is at most $\exp_2\bigl(n+nh(\delta)+O(\log_2 n)\bigr)$. Now, for any fixed $\bs_1,\bs_2$, observe that
\begin{align*}
    \mathbb{P}\bigl[\bs_1,\bs_2\in\mathcal{S}(\epsilon)\bigr]&\le \mathbb{P}\bigl[\sqrt{n}H(\bs_1)\ge (1-\epsilon)\sqrt{2n\ln 2}\bigr] \\
    &\le \exp_2\bigl(-n(1-\epsilon)^2 +O(\log_2 n)\bigr),
\end{align*}
using the fact $\sqrt{n}H(\bs_1)\sim \cN(0,1)$ and the tail bound~\eqref{eq:gaussian-tail}. Combining these facts, we obtain
\begin{equation}\label{eq:first-pt}
    \mathbb{E}\bigl[|S(0,\delta,\epsilon)|\bigr]\le \exp_2\Bigl(n\bigl(1+h(\delta)-(1-\epsilon)^2\bigr)+O(\log_2 n)\Bigr).
\end{equation}
With this, we establish Proposition~\ref{prop:cluster-size} by combining~\eqref{eq:up-bd-auxil},~\eqref{eq:2nd-part}, and~\eqref{eq:first-pt}.
\end{proof}
Equipped with Propositions~\ref{prop:2-ogp} and~\ref{prop:cluster-size}, we now prove Theorem~\ref{thm:clustered}.
\begin{proof}[Proof of Theorem~\ref{thm:clustered}]
Fix $0<\epsilon<1-\frac{1}{\sqrt{2}}$.  Note that for any $\gamma>0$, Paley-Zygmund inequality~\eqref{eq:paley-zygmund} yields together with Proposition~\ref{prop:2nd-mom-est} and~\eqref{eq:1st-mom-est} that if $p\ge \max\{P^*,P_{\rm OGP}\}$ is fixed, then 
\begin{equation}\label{eq:S-eps-low-bd}
    \mathbb{P}\Bigl[|\mathcal{S}(\epsilon)| \ge \exp_2\bigl(n(\epsilon(2-\epsilon)-\gamma)\bigr)\Bigr] = 1-o_n(1).
\end{equation}
We now define
\begin{equation}\label{eq:Xi-eps}
    \Xi(\epsilon) \triangleq \frac{\min\left\{\epsilon^{10},(1-\epsilon)^{10}\right\}}{100}.
\end{equation}
and set
\begin{equation}\label{eq:gamma}
    \gamma = \min\left\{\frac{1-h(\nu_1)-2\Xi(\epsilon)}{5},\frac{1-(1-\epsilon)^2-2\Xi(\epsilon)}{5}\right\}.
\end{equation}
Note that from our choice of $\nu_1,\nu_2$ per Proposition~\ref{prop:2-ogp}, we have $v_2<\frac12$ and $v_1=\frac{v_2}{3}<\frac16$, so 
\[
1-h(\nu_1)-2\Xi(\epsilon)\ge 1-h(\frac16)-\frac{1}{100\cdot 2^{10}}>0, 
\]
as $\min\{\epsilon,1-\epsilon\}\le 1/2$ for $\epsilon\in[0,1]$. This ensures that $\gamma$ defined in~\eqref{eq:gamma} is not vacuous. Next, $1-(1-\epsilon)^2-2\Xi(\epsilon)-4\gamma>0$.  With this, we choose any $\delta\in(0,1/2)$ with the property that
\begin{equation}\label{eq:delta}
 h(\delta)<1-(1-\epsilon)^2 - 2\Xi(\epsilon)-4\gamma.
\end{equation}
With these, we let
\begin{equation}\label{eq:c-1}
    c_1 = \epsilon(2-\epsilon)-\gamma.
\end{equation}
Next, Proposition~\ref{prop:cluster-size} together with Markov's inequality yield
\[
\mathbb{P}\bigl[|S(0,\nu_1,\epsilon)|\ge \exp_2((c+\gamma)n)\bigr]\le 2^{-(c+\gamma)n}\mathbb{E}\bigl[|S(0,\nu_1,\epsilon)|\bigr]\le \exp_2\bigl(-\Theta(n)\bigr),
\]
where $c$ is the exponent appearing in Proposition~\ref{prop:cluster-size}. We set
$\widehat{c}_2 = c+\gamma$, apply Proposition~\ref{prop:ogp-implies-clustering}, and denote by $\mathcal{C}_\ell$, $1\le \ell \le L$, the $(\nu_1,\nu_2)$-clustering of $\mathcal{S}(\epsilon)$:
 \[
 \mathcal{S}(\epsilon) = \bigcup_{1\le \ell \le L}\mathcal{C}_\ell.
 \]
 Note that for any pair $(\sigma^1,\sigma^2)\in\mathcal{S}(0,\nu_1,\epsilon)$ we must have that both $\sigma^1$ and $\sigma^2$ are elements of the same $\mathcal{C}_i$ for some $i\leq L$. As such,
\[
\frac14 \max_{1\le \ell \le L}|\mathcal{C}_\ell|^2 \le \sum_{1\le \ell\le L}\binom{|\mathcal{C}_\ell|}{2} = |S(0,\nu_1,\epsilon)| \le 2^{\widehat{c}_2 n},
\]
so that
\[
\max_{1\le \ell\le L}|C_\ell|\le \exp_2\left(\frac12 \widehat{c}_2 n + 1\right)\le \exp_2\bigl(c_2 n\bigr),
\]
where 
\begin{equation}\label{eq:c-2}
    c_2 = \frac{c}{2} + \gamma.
\end{equation}
We also have w.h.p.\,
\[
2^{c_1 n}\le |\mathcal{S}(\epsilon)| = \left|\bigcup_{1\le \ell \le L}\mathcal{C}_\ell\right| \le \sum_{1\le \ell \le L}|\mathcal{C}_\ell|\le L\cdot 2^{c_2n} \implies L\ge 2^{(c_1-c_2)n}.
\]
We now verify $c_1>c_2$ for all $p$ large enough. In fact, we establish a stronger conclusion that
\begin{equation}\label{eq:stronger-concl}
    c_1-c_2 > \Xi(\epsilon),
\end{equation}
where $\Xi(\epsilon)$ is defined in~\eqref{eq:Xi-eps}. For this, it suffices to verify
\begin{align}
    1-(1-\epsilon)^2 -\gamma &> \frac12\left(1+h(\delta)-(1-\epsilon)^2\right) + \gamma +\Xi(\epsilon)\label{verify-1}\\
     1-(1-\epsilon)^2 -\gamma &> \frac12\left(1+h(\nu_1)-2(1-\epsilon)^2+(1-2\delta)^p\right) + \gamma +\Xi(\epsilon).\label{verify-2} 
\end{align}
for all $p$ large. Note that~\eqref{verify-1} is equivalent to $1-(1-\epsilon)^2 - 2\Xi(\epsilon)-4\gamma>h(\delta)$ which holds per~\eqref{eq:delta}. On the other hand~\eqref{verify-2} is equivalent to
\[
1-h(\nu_1) > 4\gamma+(1-2\delta)^p+2\Xi(\epsilon).
\]
Per~\eqref{eq:gamma}, 
\[
1-h(\nu_1)-4\gamma-2\Xi(\epsilon)>\frac{1-h(\nu_1)}{5}>0.
\]
As $1-2\delta\in(0,1)$, there exists a $P_2\in\mathbb{N}$ such that
\[
\frac{1-h(\nu_1)}{5}>(1-2\delta)^p,\quad\text{for all}\quad p\ge P_2.
\]
Taking $\widehat{P} = \max\{P^*,P_{\rm OGP},P_2\}$, $c_1$ as in~\eqref{eq:c-1} and $c_2$ as in~\eqref{eq:c-2}, we establish Theorem~\ref{thm:clustered}.
\paragraph{Extension to $\mathcal{S}(\epsilon)\setminus \mathcal{S}(\epsilon')$} We now outline the extension to $\mathcal{S}(\epsilon)\setminus \mathcal{S}(\epsilon')$, where $0<\epsilon'<\epsilon<1-\frac{1}{\sqrt{2}}$. As $\mathcal{S}(\epsilon)\setminus \mathcal{S}(\epsilon')\subseteq \mathcal{S}(\epsilon)$, both Proposition~\ref{prop:2-ogp} and Proposition~\ref{prop:cluster-size} remain valid. Furthermore, using Lemma~\ref{lem:F-conc} we also have
\[
|\mathcal{S}(\epsilon)| = |\mathcal{S}(\epsilon)\setminus \mathcal{S}(\epsilon')|\left(1+e^{-\Theta(n)}\right).
\]
With these, rest of the argument remains the same.
\end{proof}
\subsection{Proof of Theorem~\ref{thm:shattering}}\label{sec:shatttering-pf}
Fix any $\sqrt{\ln 2}<\beta<\sqrt{2\ln 2}$, and choose $\kappa>0$ sufficiently small, so that for
\[
\epsilon = 1 -\left(\frac{\beta}{\sqrt{2\ln 2}}-\kappa\right),
\]
$\kappa<\frac14 \Xi(\epsilon)$ for $\Xi(\epsilon)$ defined in~\eqref{eq:Xi-eps}.
Define
\begin{equation}\label{eq:D-of-eps}
        \mathbb{D}(\beta,\kappa) = \bigl\{\bs\in\Sigma_n: H(\bs)\in[\beta-\kappa\sqrt{2\ln 2},\beta+\kappa\sqrt{2\ln 2}]\bigr\}.
\end{equation}
We first establish the following concentration result.
\begin{lemma}\label{lem:F-conc}
Fix $1>\kappa_2>\kappa_1>0$ and let
\begin{equation}\label{eq:F-kappa}
    \mathcal{F}(\kappa_1,\kappa_2)\triangleq \left\{\bs\in\Sigma_n:\frac{H(\bs)}{\sqrt{2\ln 2}}\in[\kappa_1,\kappa_2]\right\}.  
\end{equation}
There exists a $P\in\mathbb{N}$ such that the following holds. Fix any $p\ge P$. Then, 
\[
\bigl|\mathcal{F}(\kappa_1,\kappa_2)\bigr| = \exp_2\bigl(n(1-\kappa_1^2) + O(\log_2 n)\bigr)
\]
w.h.p.\,as $n\to\infty$.
\end{lemma}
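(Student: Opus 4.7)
The plan is to reduce the lemma to the concentration statement of Proposition~\ref{prop:2nd-mom-est} applied at two different energy levels. The key observation is that
\[
\mathcal{F}(\kappa_1,\kappa_2) = \mathcal{S}(1-\kappa_1)\setminus \mathcal{S}(1-\kappa_2),
\]
(up to the measure-zero boundary event $H(\bs)=\kappa_2\sqrt{2\ln 2}$, which can be absorbed harmlessly). Since $\kappa_1<\kappa_2$, the first moment estimate~\eqref{eq:1st-mom-est} instantiated at $\epsilon=1-\kappa_i$ gives
\[
\mathbb{E}\bigl[|\mathcal{S}(1-\kappa_i)|\bigr] = \exp_2\!\bigl(n(1-\kappa_i^2)+O(\log_2 n)\bigr),\qquad i=1,2,
\]
so $\mathbb{E}[|\mathcal{S}(1-\kappa_2)|] = e^{-\Theta(n)}\mathbb{E}[|\mathcal{S}(1-\kappa_1)|]$. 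Thus the target cardinality is dominated by $|\mathcal{S}(1-\kappa_1)|$, and it suffices to show that both $|\mathcal{S}(1-\kappa_1)|$ and $|\mathcal{S}(1-\kappa_2)|$ concentrate sharply (up to polynomial factors) around their means w.h.p.

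Choose $P=\max\{P(1-\kappa_1),P(1-\kappa_2)\}$ from Proposition~\ref{prop:2nd-mom-est}, so for $p\ge P$ we have $\mathbb{E}[|\mathcal{S}(1-\kappa_i)|^2]=(1+o_n(1))\mathbb{E}[|\mathcal{S}(1-\kappa_i)|]^2$ for $i=1,2$. Pick any constant $C>0$. By Paley--Zygmund~\eqref{eq:paley-zygmund} applied with $\theta=n^{-C}$,
\[
\mathbb{P}\!\Bigl[|\mathcal{S}(1-\kappa_i)|\ge n^{-C}\mathbb{E}[|\mathcal{S}(1-\kappa_i)|]\Bigr]\ge (1-n^{-C})^2(1+o_n(1))=1-o_n(1),
\]
and by Markov's inequality, $|\mathcal{S}(1-\kappa_i)|\le n^{C}\mathbb{E}[|\mathcal{S}(1-\kappa_i)|]$ with probability at least $1-n^{-C}$. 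Taking a union bound over $i\in\{1,2\}$ yields that w.h.p.
\[
|\mathcal{S}(1-\kappa_i)| = \exp_2\!\bigl(n(1-\kappa_i^2)+O(\log_2 n)\bigr),\qquad i=1,2.
\]

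Finally, w.h.p.\ we have
\[
|\mathcal{F}(\kappa_1,\kappa_2)|=|\mathcal{S}(1-\kappa_1)|-|\mathcal{S}(1-\kappa_2)|=\exp_2\!\bigl(n(1-\kappa_1^2)+O(\log_2 n)\bigr)\bigl(1-e^{-\Theta(n)}\bigr),
\]
since the second term is exponentially smaller than the first. This is exactly the conclusion claimed. There is no real obstacle here: the heavy lifting is done by Proposition~\ref{prop:2nd-mom-est}, and the rest is a routine Paley--Zygmund/Markov sandwich followed by the observation that the contribution of $\mathcal{S}(1-\kappa_2)$ is exponentially negligible compared to $\mathcal{S}(1-\kappa_1)$.
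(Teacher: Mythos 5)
Your proof is correct and takes essentially the same approach as the paper: write $\mathcal{F}(\kappa_1,\kappa_2)=\mathcal{S}(1-\kappa_1)\setminus\mathcal{S}(1-\kappa_2)$, invoke Proposition~\ref{prop:2nd-mom-est} to get the second-moment ratio $1+o_n(1)$ at both levels, then sandwich each $|\mathcal{S}(1-\kappa_i)|$ between $n^{-C}\mathbb{E}[\cdot]$ and $n^{C}\mathbb{E}[\cdot]$ via Paley--Zygmund and Markov. The only cosmetic difference is that you make explicit (via the first-moment comparison) that the $\mathcal{S}(1-\kappa_2)$ contribution is exponentially negligible, whereas the paper leaves this implicit.
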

   \begin{proof}[Proof of Lemma~\ref{lem:F-conc}]
Observe that for $\mathcal{S}(\epsilon)$ defined in~\eqref{eq:s-eps}, 
\[
\mathcal{F}(\kappa_1,\kappa_2) = \mathcal{S}(1-\kappa_1)\setminus \mathcal{S}(1-\kappa_2).
\]
Using Proposition~\ref{prop:2nd-mom-est}, we obtain that there exists a $P$ such that for all $p\ge P$,
\[
\mathbb{E}\bigl[\bigl|\mathcal{S}(1-\kappa_i)\bigr|^2\bigr] = (1+o_n(1))\mathbb{E}\bigl[\bigl|\mathcal{S}(1-\kappa_i)\bigr|\bigr]^2,\quad \forall i\in\{1,2\}.
\]
Now fix arbitrary $C>c>0$. 
Using Paley-Zygmund inequality~\eqref{eq:paley-zygmund}, we have
\begin{equation}\label{eq:F-low-bd}
    \mathbb{P}\Bigl[\bigl|\mathcal{S}(1-\kappa_i)\bigr|>n^{-c}\mathbb{E}\bigl[\bigl|\mathcal{S}(1-\kappa_i)\bigr|\bigr]\Bigr]\ge (1-n^{-c})^2(1+o_n(1)) = 1-o_n(1).
\end{equation}
Furthermore, by Markov's inequality,
\begin{equation}\label{eq:F-up-bd}
    \mathbb{P}\Bigl[\bigl|\mathcal{S}(1-\kappa_i)\bigr|\ge n^C\mathbb{E}\bigl[\bigl|\mathcal{S}(1-\kappa_i)\bigr|\bigr]\Bigr]\le n^{-C}.
\end{equation}
Combining~\eqref{eq:F-low-bd} and~\eqref{eq:F-up-bd} and recalling
\[
\mathbb{E}[|\mathcal{S}(1-\kappa_i)|] = \exp_2\bigl(n-n\kappa_i^2 +O(\log_2 n)\bigr)
\]
per~\eqref{eq:1st-mom-est}, we obtain
\begin{equation}\label{eq:F-mid-bd}
    |\mathcal{S}(1-\kappa_i)| = \exp_2\bigl(n-n\kappa_i^2 + O(\log_2 n)\bigr),
\end{equation}
which immediately yields the claim.
\end{proof}
Next, we apply Theorem~\ref{thm:clustered} to $\mathbb{D}(\beta,\kappa)$ and obtain that there exists $0<\nu_1<\nu_2/2<\nu_2<1$ and a $P_2^*\in\N$ such that for any $p\ge P_2^*$, there exists clusters described in Theorem~\ref{thm:clustered} w.h.p.\,as $n\to\infty$. Assume $p\ge P^*=\max\{P_1^*,P_2^*\}$ and the clusters are $\mathcal{C}_1,\dots,\mathcal{C}_L$, so that
\begin{equation}\label{eq:energy-band}
    \bigcup_{1\le i\le L}\mathcal{C}_i=\mathbb{D}(\beta,\kappa)=\bigl\{\bs\in\Sigma_n:H(\bs)\in[\beta-\kappa\sqrt{2\ln 2},\beta+\kappa\sqrt{2\ln 2}]\bigr\}.
\end{equation}
We now verify the requirements of Theorem~\ref{thm:shattering}.

\paragraph{Verifying Exponentially Many Well-Separated Clusters} Let $c_1>c_2>0$ be as in Theorem~\ref{thm:clustered}, so that 
\[
\bigl|\cup_{1\le i\le L}\mathcal{C}_i\bigr|\ge 2^{c_1n}\quad\text{and}\quad \max_{1\le i\le L}|\mathcal{C}_i|\le 2^{c_2n}.
\]
Clearly $L\ge 2^{(c_1-c_2)n}=2^{\Omega(n)}$. This, together with~\eqref{eq:energy-band} verifies ${\rm (a)}$. As for the ${\rm (b)}$, we have from the construction of $\mathcal{C}_1,\dots,\mathcal{C}_L$ (see Section~\ref{sec:clustering}) that (a) for any $1\le i\le L$ and $\bs,\bs\in\Sigma_n$, $n^{-1}d_H(\bs,\bs')\le \nu_1$ and (b) for any $1\le i<j\le L$, $\bs\in\mathcal{C}_i$ and $\bs'\in\mathcal{C}_j$, $n^{-1}d_H(\bs,\bs')\ge \nu_2$.  
\paragraph{Verifying Sub-Dominance} We now verify ${\rm (c)}$: each cluster is \emph{sub-dominant}, that is $\max_i \mu_\beta(\mathcal{C}_i)\le e^{-\Theta(n)}$. 
Fix $1\le i\le L$ and note, using~\eqref{eq:energy-band}, that
\begin{equation}\label{eq:sub1}
    \mu_\beta(\mathcal{C}_i) \le \frac{2^{c_2n}\cdot e^{\beta(\beta+\kappa\sqrt{2\ln 2})n}}{Z_\beta} = \frac{1}{Z_\beta}\exp\bigl(c_2n\ln 2+ \beta(\beta+\kappa\sqrt{2\ln 2})n\bigr).
\end{equation}
Furthermore, 
\begin{equation}\label{eq:sub2}
    \mu_\beta\bigl(\cup_{1\le i\le L}\mathcal{C}_i\bigr)\ge \frac{2^{c_1n} e^{\beta(\beta-\kappa\sqrt{2\ln 2}) n}}{Z_\beta}=  \frac{1}{Z_\beta}\exp\bigl(c_1n\ln 2 + \beta(\beta-\kappa\sqrt{2\ln 2}) n\bigr).
\end{equation}
Combining~\eqref{eq:sub1} and~\eqref{eq:sub2}, we arrive at
\[
\frac{\mu_\beta(\mathcal{C}_i)}{\mu_\beta\bigl(\cup_{1\le i\le L}\mathcal{C}_i\bigr)}\le \exp\bigl(-n\bigl((c_1-c_2)\ln 2 - \beta \kappa\bigr)\bigr).
\]
Thus, it suffices to verify 
\[
(c_1-c_2)\ln 2>2\beta\kappa\sqrt{2\ln 2},
\]
which is straightforward by using the facts $\beta<\sqrt{2\ln 2}$, $\kappa<\frac14\Xi(\epsilon)$ and $c_1-c_2>\Xi(\epsilon)$ as we verified in~\eqref{eq:stronger-concl}. These establish sub-dominance. 

\subsection*{Verifying the Condition on the Gibbs Mass of $\cup_{1\le i\le L}\mathcal{C}_i$}
The last part is to verify ${\rm (d)}$, that is to establish
\[
\mu_\beta\bigl(\cup_{1\le i\le L}\mathcal{C}_i\bigr) = 1-o_n(1)
\]
w.h.p.\,as $n\to\infty$. Recall that $\mathbb{D}(\beta,\kappa)=\cup_{1\le i\le L}\mathcal{C}_i$. We now establish a stronger conclusion that
\begin{equation}\label{eq:auxil11111}
    \mu_\beta\bigl(\cup_{1\le i\le L}\mathcal{C}_i\bigr)  = \mu_\beta(\mathbb{D}(\beta,\kappa))\ge 1-e^{-\Theta(n)}.
\end{equation}
\begin{proposition}\label{prop:pf-dominated}
    For any $\beta<\sqrt{2\ln 2}$ and any $\kappa<1/\sqrt{2}$, there exists a $P^*\in\N$ such that the following holds. Fix any $p\ge P^*$. Then, 
    \[
    Z_\beta = Z_\beta\left[\frac{\beta}{\sqrt{2\ln 2}}-\kappa,\frac{\beta}{\sqrt{2\ln 2}}+\kappa\right](1+e^{-\Theta(n)}),
    \]
    where 
    \begin{equation}\label{eq:partition-fnc}
        Z_\beta[\kappa_1,\kappa_2] = \sum_{\bs : \kappa_1\sqrt{2\ln 2}\le H(\bs)\le \kappa_2\sqrt{2\ln 2}} e^{\beta n H(\bs)}.
    \end{equation}
\end{proposition}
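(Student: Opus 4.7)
The plan is to combine the sharp concentration estimate of Lemma~\ref{lem:F-conc} with a Laplace-type analysis to show that $Z_\beta$ is dominated by configurations whose rescaled energy $H(\bs)/\sqrt{2\ln 2}$ is within $\kappa$ of $\gamma^* \triangleq \beta/\sqrt{2\ln 2}$. Introduce the quadratic
\[
\varphi(\gamma) \triangleq (1-\gamma^2)\ln 2 + \beta\gamma\sqrt{2\ln 2},
\]
which is the exponential rate of the contribution to $Z_\beta$ from configurations with $H(\bs)\approx \gamma\sqrt{2\ln 2}$. A direct calculation gives $\arg\max_\gamma \varphi = \gamma^*$, $\varphi(\gamma^*) = \ln 2 + \beta^2/2$, and the crucial identity $\varphi(\gamma^*) - \varphi(\gamma) = \ln 2\cdot (\gamma - \gamma^*)^2$.

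First, I would partition $[0,1]\setminus [\gamma^*-\kappa,\gamma^*+\kappa]$ into $K = O_n(1)$ subintervals $[\tau_j,\tau_{j+1}]$ of width at most $\delta$, where $\delta = \delta(\kappa)>0$ is chosen small (to be tuned below). Applying Lemma~\ref{lem:F-conc} simultaneously to each of the finitely many bands $[\tau_j,\tau_{j+1}]$ and to the central sub-band $[\gamma^*,\gamma^*+\kappa]$ (a union bound is harmless as the number of bands depends only on $\kappa$), we obtain w.h.p.
\[
Z_\beta[\tau_j,\tau_{j+1}] \le \exp\bigl(n\varphi(\tau_j) + n\beta\delta\sqrt{2\ln 2} + O(\log n)\bigr),
\]
upon bounding each Boltzmann weight by $e^{\beta n\tau_{j+1}\sqrt{2\ln 2}}$, and analogously
\[
Z_\beta\bigl[\gamma^*-\kappa,\gamma^*+\kappa\bigr] \ge Z_\beta[\gamma^*,\gamma^*+\kappa] \ge \exp\bigl(n\varphi(\gamma^*) - O(\log n)\bigr).
\]
Taking the ratio and using $\varphi(\tau_j)-\varphi(\gamma^*) = -\ln 2\cdot(\gamma^*-\tau_j)^2 \le -\ln 2\cdot \kappa^2$ yields
\[
\frac{Z_\beta[\tau_j,\tau_{j+1}]}{Z_\beta[\gamma^*-\kappa,\gamma^*+\kappa]} \le \exp\bigl(-n\ln 2\cdot \kappa^2 + n\beta\delta\sqrt{2\ln 2} + O(\log n)\bigr),
\]
which is $e^{-\Theta(n)}$ provided $\delta < \kappa^2\ln 2/(2\beta\sqrt{2\ln 2})$. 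For the complementary negative-energy part, the deterministic bound $Z_\beta[-\infty,0]\le 2^n = e^{n\varphi(0)}$ and the strict inequality $\varphi(0) = \ln 2 < \ln 2 + \beta^2/2 = \varphi(\gamma^*)$ again give an $e^{-\Theta(n)}$ ratio.

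Summing the $K+1$ remainder contributions (each $e^{-\Theta(n)}$ times $Z_\beta[\gamma^*-\kappa,\gamma^*+\kappa]$) and rearranging
\[
Z_\beta = Z_\beta\bigl[\gamma^*-\kappa,\gamma^*+\kappa\bigr] + \sum_{j=0}^{K-1} Z_\beta[\tau_j,\tau_{j+1}] + Z_\beta[-\infty,0]
\]
yields the claimed multiplicative factor $1 + e^{-\Theta(n)}$. The main obstacle is essentially bookkeeping: choosing a single $P^*$ independent of $n$ under which Lemma~\ref{lem:F-conc} applies simultaneously to each of the $O(1)$ bands. Since the number of bands depends only on $\kappa$, one simply takes $P^*$ to be the maximum of the finitely many thresholds produced by Lemma~\ref{lem:F-conc}. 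The hypotheses $\beta<\sqrt{2\ln 2}$ and $\kappa<1/\sqrt{2}$ enter in guaranteeing $\gamma^*\in(0,1)$ and leaving enough Laplace gap $\ln 2\cdot \kappa^2$ to absorb the discretization slack $\beta\delta\sqrt{2\ln 2}$ for some admissible $\delta>0$.
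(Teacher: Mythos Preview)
Your proposal is correct and follows essentially the same route as the paper: the same quadratic $\varphi$, the same discretization of $[0,1]\setminus[\gamma^*-\kappa,\gamma^*+\kappa]$ into $O_n(1)$ bands, the same application of Lemma~\ref{lem:F-conc} to each band, the same identity $\varphi(\gamma^*)-\varphi(\gamma)=\ln 2\cdot(\gamma-\gamma^*)^2$, and the same deterministic bound for $Z_\beta[-\infty,0]$. The only cosmetic difference is that the paper fixes the mesh width to $\kappa^4$ (which forces the constraint $\kappa<(\ln 2/(2\beta^2))^{1/4}$, hence the hypothesis $\kappa<1/\sqrt{2}$), whereas you leave the mesh width $\delta$ free and tune it at the end; this is inconsequential.
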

Note that Proposition~\ref{prop:pf-dominated} immediately yields~\eqref{eq:auxil11111} since 
\[
\mu_\beta(\mathbb{D}(\beta,\kappa)) = \frac{Z_\beta\left[\frac{\beta}{\sqrt{2\ln 2}}-\kappa,\frac{\beta}{\sqrt{2\ln 2}}+\kappa\right]}{Z_\beta}.
\]
\begin{proof}[Proof of Proposition~\ref{prop:pf-dominated}]
 Using Lemma~\ref{lem:F-conc}, we immediately obtain that for any $0<\kappa_1<\kappa_2\le 1$, there exists a $P(\kappa_1,\kappa_2)\in\mathbb{N}$ such that for all fixed $p\ge P(\kappa_1,\kappa_2)$  and any $\beta>0$,
\begin{equation}\label{eq:up-bd-Z-rest}
    Z_\beta[\kappa_1,\kappa_2]\le \bigl|\mathcal{F}(\kappa_1,\kappa_2)\bigr|e^{\beta n \kappa_2\sqrt{2\ln 2}}=\exp\Bigl(n\Bigl((1-\kappa_1^2)\ln 2 + \beta \kappa_2\sqrt{2\ln 2}\Bigr) + O(\ln n)\Bigr)
\end{equation}
w.h.p.\,as $n\to\infty$. Additionally, for any $\kappa>0$ we have (deterministically) that  
\begin{equation*}
    Z(0,\kappa)\le 2^n e^{\beta n\kappa \sqrt{2\ln 2}}=\exp\left(n\ln 2+\beta n\kappa\sqrt{2\ln 2}\right).
\end{equation*}
In particular,~\eqref{eq:up-bd-Z-rest} holds for any $0\le \kappa_1<\kappa_2\le 1$. 
\paragraph{Discretization.}
We now fix a $\kappa$ to be tuned and discretize $[0,1]$ in the following way. Define the quadratic function \begin{equation}\label{eq:quadratic}
    \varphi(\gamma) = (1-\gamma^2)\ln 2+ \beta \gamma \sqrt{2\ln 2},
\end{equation}
and let
\begin{equation}\label{eq:gamma-star-I-star}
    \gamma^* = \frac{\beta}{\sqrt{2\ln 2}} = \argmax_{\gamma}\varphi(\gamma)\quad\text{and}\quad I^* = [\gamma^*-\kappa,\gamma^*+\kappa].
\end{equation}
For discretization, let $0=\tau_0<\tau_1<\cdots<\tau_Q = \gamma^*-\kappa$ with $\tau_i-\tau_{i-1}=\kappa^4$ for $1\le i\le Q-1$ and $\tau_Q-\tau_{Q-1}\le \kappa^4$. Set $I_i = [\tau_i,\tau_{i+1}]$ for $0\le i\le Q-1$. Furthermore, let $\gamma^*+\kappa=\eta_0<\eta_1<\cdots<\eta_L=1$ with $\eta_i-\eta_{i-1}=\kappa^4$ for $1\le i\le L-1$ and $\eta_L-\eta_{L-1}\le \kappa^4$. Then set $J_i = [\eta_i,\eta_{i+1}]$ for $0\le i\le Q-1$. These yield a discretization of $[0,1]\setminus I^*$ into $Q+L$ intervals of length at most $\kappa^4$.
\paragraph{Restricted Partition Functions.} Set 
\begin{align}
    Z_i &\triangleq Z_\beta[\tau_i,\tau_{i+1}],\quad 0\le i\le Q-1 \\
    \hat{Z}_i&\triangleq Z_\beta[\eta_i,\eta_{i+1}],\quad 0\le i\le L-1 \\
    Z_n&\triangleq Z_\beta[-\infty,0].
\end{align}
Additionally, set 
\begin{align*}
    Z^* &= Z_\beta[\gamma^*-\kappa,\gamma^*+\kappa].
\end{align*}
We trivially have $Z_n\le \exp(n\ln 2)=\exp(n\varphi(0))$. Next, using~\eqref{eq:up-bd-Z-rest}, we obtain that there exists $P_0,\dots,P_{Q-1}$ such that for any $0\le i\le Q-1$ and any $p\ge P_i$, 
\begin{align}
       Z_i &\le\exp\Bigl(n(1-\tau_i^2)\ln 2+n\beta \tau_{i+1}\sqrt{2\ln 2}+O(\ln n)\Bigr)\label{eq:use-Z-up-bd}\\
       &\le \exp\Bigl(n(1-\tau_i^2)\ln 2+n\beta \tau_{i}\sqrt{2\ln 2}+n\beta\kappa^4 \sqrt{2\ln 2}+O(\ln n)\Bigr)\label{eq:tau-i-bd}\\
       &=\exp\Bigl(n\varphi(\tau_i)+n\beta\kappa^4 \sqrt{2\ln 2}+O(\ln n)\Bigr)\label{eq:useful},
\end{align}
 w.h.p.; where~\eqref{eq:use-Z-up-bd} uses~\eqref{eq:up-bd-Z-rest},~\eqref{eq:tau-i-bd} uses the fact $\tau_{i+1}-\tau_i\le \kappa^4$ and the last equation recalls the definition of $\varphi$ per~\eqref{eq:quadratic}. Similarly, there exists $\hat{P}_0,\dots,\hat{P}_{L-1}$ such that for any $0\le i\le L-1$ and any $p\ge \hat{P}_i$, such that w.h.p.
 \begin{equation}\label{eq:useful2}
     \hat{Z}_i \le \exp\Bigl(n\varphi(\eta_i)+n\beta\kappa^4 \sqrt{2\ln 2}+O(\ln n)\Bigr).
 \end{equation}
 We now control $Z^*$. Using Lemma~\ref{lem:F-conc}, we obtain that there exists a $P^*\in\N$ such that for $p\ge P^*$,
 \begin{equation}\label{eq:Z-star-low}
      Z^* \ge Z_\beta[\gamma^*,\gamma^*+\kappa]\ge \exp\Bigl(n(1-(\gamma^*)^2)\ln 2 + n\beta\gamma^*\sqrt{2\ln 2}-O(\ln n)\Bigr) = \exp\Bigl(n\varphi(\gamma^*)-O(\ln n)\Bigr).
  \end{equation}
  \paragraph{Bounding Restricted Partition Functions.} We now assume \[
  p\ge P = \max\{P^*,P_0,\dots,P_{Q-1},\hat{P}_0,\dots,\hat{P}_{L-1}\}\]
  and upper bound $Z_i/Z^*$ and $\hat{Z}_i/Z^*$ for each $i$. To that end, fix $0\le i\le Q-1$ and observe that
  \begin{align}
     \varphi(\tau_i)-\varphi(\gamma^*) &=(1-\tau_i^2)\ln 2 + \beta\tau_i \sqrt{2\ln 2} - (1-(\gamma^*)^2)\ln 2 - \beta\gamma^*\sqrt{2\ln 2}\nonumber \\
     &=(\gamma^*-\tau_i)\bigl(\ln 2(\gamma^*+\tau_i) - \beta\sqrt{2\ln 2}\bigr)\nonumber \\
     &=-\ln 2(\gamma^*-\tau_i)^2\label{eq:lasttt},
  \end{align}
where~\eqref{eq:lasttt} uses the fact $\gamma^* = \beta/\sqrt{2\ln 2}$. Now,
  \begin{align}
      \frac{Z_i}{Z^*}&\le \exp\Bigl(n\bigl(\varphi(\tau_i)-\varphi(\gamma^*)\bigr)+n\beta\kappa^4\sqrt{2\ln 2}+O(\ln n)\Bigr)\nonumber \\
      & = \exp\Bigl(-n\ln 2(\gamma^*-\tau_i)^2+n\beta\kappa^4\sqrt{2\ln 2}+O(\ln n)\Bigr) \label{eq:use-lastttt}\\
      &\le \exp\Bigl(-n(\ln 2)\kappa^2 + n\beta\kappa^4\sqrt{2\ln 2}+O(\ln n)\Bigr)\label{eq:penultimate}
  \end{align}
  where~\eqref{eq:use-lastttt} uses~\eqref{eq:lasttt} and~\eqref{eq:penultimate} is obtained by noticing that $|\gamma^*-\tau_i|\ge \kappa$. Moreover, the upper bound~\eqref{eq:penultimate}  remains true also for $\hat{Z}_i/Z^*$. Additionally for $Z_n$, 
  \[
  \frac{Z_n}{Z^*}\le \exp\bigl(n(\varphi(0)-\varphi(\gamma^*)+O(\ln n)\bigr)\le \exp\Bigl(-n(\ln 2)\kappa^2 + O(\ln n)\Bigr) = e^{-\Theta(n)},
  \]
  for any $\kappa>0$. We now set
  \begin{equation}\label{eq:kappa-star}      
  \kappa^* = \left(\frac{\ln 2}{2\beta^2}\right)^{\frac14}.
  \end{equation}
  Then, provided $\kappa<\kappa^*$, we immediately obtain 
  \[
\exp\Bigl(-n(\ln 2)\kappa^2 + n\beta\kappa^4\sqrt{2\ln 2}+O(\ln n)\Bigr) = e^{-\Theta(n)}.
  \]
  Note that for $\beta<\sqrt{2\ln 2}$, $\kappa^*$ in~\eqref{eq:kappa-star} is at least $1/\sqrt{2}$. So, $\kappa<\bar{\kappa}=1/\sqrt{2}$ immediately ensures $\kappa<\kappa^*$. Lastly, as $\kappa=O_n(1)$, we have that $Q,L=O_n(1)$, and consequently, 
  \[
  \frac{1}{Z^*}\left(Z_n + \sum_{0\le i\le Q-1}Z_i + \sum_{0\le i\le L-1}\hat{Z}_i\right)\le e^{-\Theta(n)}.
  \]
  This establishes Proposition~\ref{prop:pf-dominated}, since
  \[
  Z= Z^* + Z_n+\sum_{0\le i\le Q-1}Z_i + \sum_{0\le i\le L-1}\hat{Z}_i. \qedhere
  \]
\end{proof}

\subsection{Proof of Theorem~\ref{thm:m-ogp}}\label{sec:pf-m-ogp}
Our proof is based on the \emph{first moment method}. Fix an $m\in\mathbb{N}$ and a $\gamma>1/\sqrt{m}$. For $0<\eta<\xi<1$ and $c>0$ to be tuned, fix an $\mathcal{I}\subset [0,\frac{\pi}{2}]$ with  $|\mathcal{I}|\subset 2^{cn}$. For $\tau\in\mathcal{I}$, recall the notation $\hat{J}^{(t)}_{i_1,\dots,i_p}(\tau)$ from Definition~\ref{def:admit}. Set
\[
\boldsymbol{\widehat{J}}^{(t)}(\tau) = \left(\hat{J}_{i_1,\dots,i_p}^{(t)}(\tau):1\le i_1,\dots,i_p\le n\right)\in(\R^n)^{\otimes p}.
\]
and let
\begin{equation}\label{eq:hamilton-interpol}
    H\left(\bs^{(t)},\boldsymbol{\hat{J}}^{(t)}(\tau)\right) = n^{-\frac{p+1}{2}}\sum_{1\le i_1,\dots,i_p\le n} \hat{J}^{(t)}_{i_1,\dots,i_p}(\tau)\bs^{(t)}_{i_1}\cdots \bs^{(t)}_{i_p}.
\end{equation}
Next, for 
\begin{equation}\label{eq:overlap-set}
\mathcal{F}(m,\xi,\eta)\triangleq \Bigl\{\left(\bs^{(1)},\dots,\bs^{(m)}\right):\xi-\eta\le n^{-1}\ip{\bs^{(i)}}{\bs^{(j)}}\le \xi,1\le i<j\le m\Bigr\}
\end{equation}
set
\begin{equation}\label{eq:rv-N}
    M=\bigl|S(\gamma,m,\xi,\eta,\mathcal{I})\bigr|=\sum_{(\bs^{(1)},\dots,\bs^{(m)})\in \mathcal{F}(m,\xi,\eta)} \ind\left\{\exists\tau_1,\dots,\tau_m\in\mathcal{I}:\min_{1\le i\le m}H\left(\bs^{(i)},\boldsymbol{\hat{J}}^{(i)}(\tau_i)\right)\ge \gamma\sqrt{2\ln 2}\right\}.
\end{equation}
We will establish $\mathbb{E}[M]=\exp(-\Theta(n))$ for suitable $0<\eta<\xi<1$; which will then yield the result via Markov's inequality:
\begin{equation}\label{eq:markovvvv}
\mathbb{P}\Bigl[S(\gamma,m,\xi,\eta,\mathcal{I})\ne\varnothing\Bigr] =\mathbb{P}[M\ge 1]\le \mathbb{E}[M] = \exp(-\Theta(n)).
\end{equation}
We next estimate $\mathbb{E}[M]$ by upper bounding the cardinality, $|\mathcal{F}(m,\xi,\eta)|$, and the probability term. 
\paragraph{Counting term.} Fix $m\in\mathbb{N}$ and $0<\eta<\xi<1$. We claim
\begin{lemma}\label{lemma:counting}
\[
\bigl|\mathcal{F}(m,\xi,\eta)\bigr|\le \exp_2\left(n+n(m-1)h\left(\frac{1-\xi+\eta}{2}\right)+O(\log_2 n)\right).
\]
\end{lemma}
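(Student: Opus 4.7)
The plan is to bound $|\mathcal{F}(m,\xi,\eta)|$ by sequentially choosing the configurations $\bs^{(1)},\dots,\bs^{(m)}$ and, at each step $i\ge 2$, keeping only the pairwise overlap constraint relative to $\bs^{(1)}$ (discarding the stronger constraints relative to $\bs^{(2)},\dots,\bs^{(i-1)}$). This yields a clean product bound. First I would pick $\bs^{(1)}\in\Sigma_n$ freely, contributing a factor of $2^n$. Then for each $i\in\{2,\dots,m\}$, by Definition~\ref{def:admit}, $\bs^{(i)}$ must satisfy $n^{-1}\ip{\bs^{(1)}}{\bs^{(i)}}\in[\xi-\eta,\xi]$, so the number of admissible $\bs^{(i)}$ is at most
\[
N(\xi,\eta)\triangleq \bigl|\{\bs\in\Sigma_n : n^{-1}\ip{\bs^{(1)}}{\bs}\in[\xi-\eta,\xi]\}\bigr|=\sum_{\alpha\in O\cap[\xi-\eta,\xi]}\binom{n}{\frac{1-\alpha}{2}n},
\]
where $O$ is the set of admissible overlaps from~\eqref{eq:admissible-overlap}; this count is independent of the choice of $\bs^{(1)}$ by symmetry.

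Next I would bound $N(\xi,\eta)$ by the largest binomial coefficient in the sum times the number of admissible overlap values. Since $0<\xi-\eta<\xi<1$, the map $\alpha\mapsto \frac{1-\alpha}{2}$ takes values in a subinterval of $(0,\tfrac12)$, on which $k\mapsto \binom{n}{k}$ is increasing, so the maximum binomial coefficient in the sum is $\binom{n}{\frac{1-(\xi-\eta)}{2}n}$. Since $|O|\le n+1$, we obtain
\[
N(\xi,\eta)\le (n+1)\binom{n}{\frac{1-\xi+\eta}{2}n}\le \exp_2\!\left(nh\!\left(\frac{1-\xi+\eta}{2}\right)+O(\log_2 n)\right),
\]
by Stirling's approximation (or Lemma~\ref{lem:bin-coeff-tight}).

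Combining these two steps gives
\[
|\mathcal{F}(m,\xi,\eta)|\le 2^n\cdot N(\xi,\eta)^{m-1}\le \exp_2\!\left(n+n(m-1)h\!\left(\frac{1-\xi+\eta}{2}\right)+O(\log_2 n)\right),
\]
absorbing the $(m-1)$ copies of the $O(\log_2 n)$ error term into a single $O(\log_2 n)$ since $m$ is a fixed constant. This is exactly the claimed bound. The argument has no real obstacle; the only point to watch is the monotonicity of $\binom{n}{k}$ needed to identify the dominant term in the overlap sum, which requires only that $\xi-\eta>0$ (and hence $(1-\xi+\eta)/2<1/2$), an assumption that is implicit in the parameter regime of Definition~\ref{def:admit} and will be guaranteed by the subsequent tuning of $\xi,\eta$ in the proof of Theorem~\ref{thm:m-ogp}.
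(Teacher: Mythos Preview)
Your proof is correct and follows essentially the same approach as the paper: choose $\bs^{(1)}$ freely, then for each $i\ge 2$ retain only the overlap constraint with $\bs^{(1)}$ and bound the resulting count by the largest binomial coefficient times the number of admissible overlap values, absorbing the latter into the $O(\log_2 n)$ term. The only (minor) addition in your write-up is the explicit monotonicity check identifying $\binom{n}{\frac{1-\xi+\eta}{2}n}$ as the dominant term, which the paper leaves implicit.
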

\begin{proof}[Proof of Lemma~\ref{lemma:counting}]
Note that for any $\bs,\bs'\in\bincube$,
\[
\ip{\bs}{\bs'}  = n-2d_H(\bs,\bs').
\]
There are $2^n$ choices for $\bs^{(1)}$. Having fixed a $\bs^{(1)}$; any $\bs^{(i)}$, $2\le i\le m$, can be chosen in 
\[
\displaystyle\sum_{\substack{\rho:\rho n\in\mathbb{N}\\ \frac{1-\xi}{2}\le \rho\le \frac{1-\xi+\eta}{2}}} \binom{n}{\rho n}\le \binom{n}{\frac{1-\xi+\eta}{2} n}n^{O(1)} 
\]
different ways, subject to $\xi-\eta\le n^{-1}\ip{\bs^{(1)}}{\bs^{(i)}}\le \xi$. Next, for any $\rho\in(0,1)$, $\binom{n}{n\rho} = \exp_2\bigl(nh(\rho) + O(\log_2 n)\bigr)$ by Stirling's approximation. Combining these, and the fact $m=O(1)$ (as $n\to\infty$), we establish Lemma~\ref{lemma:counting}.
\end{proof}
\paragraph{Probability term} We next upper bound the probability term. To that end, we first establish that it suffices to consider $\tau_1=\cdots=\tau_m=0$. To that end, we recall Slepian's lemma~\cite{slepian1962one}:
\begin{lemma}\label{lemma:Slepian}
    Let $X=(X_1,\dots,X_n)$ and $Y=(Y_1,\dots,Y_n)$ be multivariate normal random vectors such that $\mathbb{E}[X_i]=\mathbb{E}[Y_i]=0,\forall i$, $\mathbb{E}[X_i^2]=\mathbb{E}[Y_i^2],\forall i$, and
    \begin{align*}
        \mathbb{E}[X_iX_j]\le \mathbb{E}[Y_iY_j],\quad \text{for}\quad 1\le i<j\le n.
    \end{align*}
    Fix any $c_1,\dots,c_n\in\R$. Then,
    \[
    \mathbb{P}\bigl[X_i\le c_i,\forall i\bigr] \le  \mathbb{P}\bigl[Y_i\le c_i,\forall i\bigr].
    \]
    \end{lemma}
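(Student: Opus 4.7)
The plan is to use the classical Gaussian interpolation argument, often referred to as Stein's trick or Price's theorem. Without loss of generality, assume $X$ and $Y$ are independent. Define the interpolated covariance $\Sigma(t)=(1-t)\Sigma_X + t\Sigma_Y$ for $t\in[0,1]$, which is positive semi-definite for every $t$ since the set of such matrices is convex. Let $Z(t)$ be a centered Gaussian vector with covariance $\Sigma(t)$, so $Z(0)\stackrel{d}{=}X$ and $Z(1)\stackrel{d}{=}Y$. The key observation is that by the equal-variance hypothesis, $\Sigma(t)_{ii}$ is constant in $t$; only the off-diagonal entries move, and they move in a monotone direction by the hypothesis $(\Sigma_Y-\Sigma_X)_{ij}\ge 0$ for $i<j$. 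Define $F(t)=\mathbb{P}[Z_i(t)\le c_i,\ 1\le i\le n]$; the goal is to show $F$ is non-decreasing, so that $F(0)\le F(1)$, which is exactly the claim.

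To differentiate $F(t)$, first replace the indicator $\mathbf{1}\{z\le c\}$ by a smooth approximation. Pick a smooth non-increasing function $\psi_\epsilon:\mathbb{R}\to[0,1]$ that converges pointwise to $\mathbf{1}\{\cdot\le 0\}$ as $\epsilon\to 0$, and set
\[
f_\epsilon(z) = \prod_{1\le i\le n}\psi_\epsilon(z_i-c_i).
\]
Then $\partial_i f_\epsilon\le 0$ and, crucially, $\partial_i\partial_j f_\epsilon\ge 0$ for all $i\ne j$, since each factor is a product of two non-positive derivatives and non-negative remaining terms. Define $F_\epsilon(t)=\mathbb{E}[f_\epsilon(Z(t))]$. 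The Gaussian density $\varphi_{\Sigma(t)}$ satisfies a Fokker--Planck type identity $\partial_t \varphi_{\Sigma(t)} = \frac12\sum_{i,j}(\Sigma_Y-\Sigma_X)_{ij}\partial_{ij}\varphi_{\Sigma(t)}$, which after integration by parts yields
\[
\frac{d}{dt}F_\epsilon(t) = \frac12\sum_{1\le i,j\le n}(\Sigma_Y-\Sigma_X)_{ij}\,\mathbb{E}\bigl[\partial_i\partial_j f_\epsilon(Z(t))\bigr].
\]
This is the standard Price/Plackett formula and is the analytic engine of Slepian's inequality.

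The diagonal terms in the sum vanish because $(\Sigma_Y-\Sigma_X)_{ii}=0$ by the equal-variance hypothesis, so only off-diagonal terms remain. By the hypothesis $(\Sigma_Y-\Sigma_X)_{ij}\ge 0$ for $i\ne j$ together with $\partial_i\partial_j f_\epsilon\ge 0$, every surviving summand is non-negative, hence $F_\epsilon$ is non-decreasing on $[0,1]$ and $F_\epsilon(0)\le F_\epsilon(1)$. Letting $\epsilon\downarrow 0$ and invoking dominated convergence (with the trivial dominating bound $0\le f_\epsilon\le 1$) yields $F(0)\le F(1)$, which is exactly the asserted inequality. The main technical obstacle is rigorously justifying the exchange of $\frac{d}{dt}$ with the expectation and the Gaussian integration by parts with the smoothed indicator; this is handled by first working with $f_\epsilon\in C^\infty_c$-type approximations (or by adding a tiny independent Gaussian perturbation to ensure $\Sigma(t)$ is strictly positive definite, arguing for $t\in[\delta,1-\delta]$, and then sending auxiliary parameters to zero). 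Everything else is bookkeeping.
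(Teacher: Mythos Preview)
Your proof is the standard Gaussian interpolation (Price/Plackett) argument and is correct. Note, however, that the paper does not supply its own proof of this lemma: it is simply recalled as Slepian's classical inequality with a citation to \cite{slepian1962one}, and then applied. So there is nothing to compare against; your write-up supplies a proof where the paper only quotes the result.
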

In particular, applying Lemma~\ref{lemma:Slepian} to $-X=(-X_1,\dots,-X_n)$ and $-Y=(-Y_1,\dots,-Y_n)$, we immediately obtain \[
    \mathbb{P}\bigl[X_i\ge c_i,\forall i\bigr] \le  \mathbb{P}\bigl[Y_i\ge c_i,\forall i\bigr],
    \]
    for all $c_1,\dots,c_n\in\R$. Now, fix a $\left(\bs^{(1)},\dots,\bs^{(m)}\right)\in\mathcal{F}(m,\xi,\eta)$, $\tau_1,\dots,\tau_m\in\mathcal{I}$, and denote by $\Sigma(\bs^{(1)},\dots,\bs^{(m)};\tau_1,\dots,\tau_m)\in\R^{m\times m}$ the covariance matrix associated to 
    \[
    \left(\sqrt{n}H\bigl(\bs^{(i)},\boldsymbol{\widehat{J}}^{(i)}(\tau_i)\bigr):1\le i\le m\right)\in\R^m.
    \]
    We first verify that 
     \begin{equation}\label{eq:TO-VERIFY}
\Bigl(\Sigma\bigl(\bs^{(1)},\dots,\bs^{(m)};\tau_1,\dots,\tau_m\bigr)\Bigr)_{k,\ell} \le \Bigl(\Sigma\bigl(\bs^{(1)},\dots,\bs^{(m)};0,\dots,0\bigr)\Bigr)_{k,\ell},\quad \forall 1\le k<\ell \le m.
         \end{equation}
    Observe that
    \begin{align}
        &\mathbb{E}\left[\sqrt{n}H\left(\bs^{(k)}\hat{\boldsymbol{J}}^{(k)}(\tau_k)\right) \cdot \sqrt{n}H\left(\bs^{(\ell)}\hat{\boldsymbol{J}}^{(\ell)}(\tau_\ell)\right)\right]\nonumber \\
        &=\frac{1}{n^p}\sum_{\substack{1\le i_1,\dots,i_p\le n\\ 1\le i_1',\dots,i_p'\le n}}\mathbb{E}\left[\left(\cos(\tau_k)J_{i_1,\dots,i_p}^{(0)} + \sin(\tau_k)J_{i_1,\dots,i_p}^{(k)}\right)\left(\cos(\tau_\ell)J_{i_1',\dots,i_p'}^{(0)} + \sin(\tau_\ell)J_{i_1',\dots,i_p'}^{(\ell)}\right)\right] \left(\prod_{1\le j\le p}\bs^{(k)}_{i_j}\bs^{(\ell)}_{i_j'}\right)\nonumber\\
        &=\frac{1}{n^p}\sum_{1\le i_1,\dots,i_p\le n}\cos(\tau_k)\cos(\tau_\ell)\left(\prod_{1\le j\le p}\bs^{(k)}_{i_j}\bs^{(\ell)}_{i_j}\right) \nonumber\\
        &=\cos(\tau_k)\cos(\tau_\ell) \left(\frac{\ip{\bs^{(k)}}{\bs^{(\ell)}}}{n}\right)^p\label{eq:COV-EQ}.
    \end{align}
    Using now the fact $0\le \cos(\tau_k),\cos(\tau_\ell)\le 1$ and the fact $\ip{\bs^{(k)}}{\bs^{(\ell)}}\ge 0$ per~\eqref{eq:overlap-set}, we verify~\eqref{eq:TO-VERIFY}.
   
    Applying Lemma~\ref{lemma:Slepian}, we thus obtain 
    \begin{equation}\label{eq:suffices-to-consider-tau-0}
        \sup_{\tau_1,\dots,\tau_i\in[0,\frac{\pi}{2}]}\mathbb{P}\left[H\left(\bs^{(i)},\boldsymbol{\widehat{J}}^{(i)}(\tau_i)\right)\ge \gamma \sqrt{2\ln 2},\forall i\right] \le \mathbb{P}\left[H\left(\bs^{(i)},\boldsymbol{\widehat{J}}^{(i)}(0)\right)\ge \gamma \sqrt{2\ln 2},\forall i\right].
    \end{equation}
   Setting $H(\bs^{(i)})\triangleq H\left(\bs^{(i)},\boldsymbol{\widehat{J}}^{(i)}(0)\right), 1\le i\le m$ for convenience, it thus suffices to control 
   \begin{equation}\label{eq:control-thissss}
    \mathbb{P}\left[H\bigl(\bs^{(i)}\bigr)\ge \gamma \sqrt{2\ln 2},\forall i\right].
   \end{equation}
We next record several useful facts in the following Lemma.
\begin{lemma}\label{lemma:prob}
Fix any $(\bs^{(i)}:1\le i\le m)\in\mathcal{F}(m,\xi,\eta)$ and let
\[
n^{-1}\ip{\bs^{(k)}}{\bs^{(\ell)}} = \xi-\eta_{k\ell},\quad 1\le k<\ell\le m.
\]  
Denote by $\Sigma(\boldsymbol{\eta})$ the covariance matrix of $\left(\sqrt{n}H(\bs^{(i)}):1\le i\le m\right)\in\R^m$, where $\boldsymbol{\eta}=(\eta_{k\ell}:1\le k<\ell\le m)\in\R^{m(m-1)/2}$. 
Then, the following holds.
\begin{itemize}
    \item[(a)] For every $1\le k<\ell\le m$, $\eta_{k\ell}\ge 0$; $\|\boldsymbol{\eta}\|_\infty\le \eta$, $\bigl(\Sigma(\boldsymbol{\eta})\bigr)_{ii}=1$, $1\le i
\le m$, and
\[
\bigl(\Sigma(\boldsymbol{\eta})\bigr)_{k\ell}=\bigl(\Sigma(\boldsymbol{\eta})\bigr)_{\ell k}=(\xi-\eta_{k\ell})^p,\quad 1\le k<\ell\le m.
\]
    \item[(b)]  We have $\Sigma(\boldsymbol{\eta})= \Sigma+E$, where 
    \begin{equation}\label{eq:sigma}
\Sigma = (1-\xi^p)I_{m\times m}+\xi^p\boldsymbol{1}\boldsymbol{1}^T 
\end{equation}
$E_{ii}=0$ for $1\le i\le m$ and $0\le E_{k\ell}\le p\eta \xi^{p-1}$
    for $1\le k<\ell\le m$. Consequently, 
    $\Sigma(\boldsymbol{\eta})$ is positive definite (PD) provided 
    \[
    \eta < \frac{1-\xi^p}{mp\xi^{p-1}}.
    \]
    \item[(c)] For $\Sigma$ in~\eqref{eq:sigma}, 
    \[
    |\Sigma| = (1-\xi^p)^{m-1}\left(1+(m-1)\xi^p\right)
    \]
    and
    \[
    \Sigma^{-1} = \frac{1}{1-\xi^p} I - \frac{\xi^p}{(1-\xi^p)\left(1+(m-1)\xi^p\right)}\boldsymbol{1}\boldsymbol{1}^T.
    \]
    \item[(d)] Provided that $\eta$ is small enough, we have
    \begin{align*}
    &\mathbb{P}\left[\min_{1\le i\le m}H(\bs^{(i)})\ge \gamma\sqrt{2\ln 2}\right] \\
    &\le n^{\frac{m}{2}} \left(\gamma\sqrt{\frac{\ln 2}{\pi}}\right)^m \left(\prod_{i\le m}\ip{e_i}{\Sigma(\boldsymbol{\eta})^{-1}\boldsymbol{1}}\right)\bigl|\Sigma(\boldsymbol{\eta})\bigr|^{-\frac12}\exp_2\Bigl(-\gamma^2 n \cdot \boldsymbol{1}^T\Sigma(\boldsymbol{\eta})^{-1}\boldsymbol{1}\Bigr),
   \end{align*}
   where $e_i\in\R^m$ is the $i{\rm th}$ unit vector.
\end{itemize}
\end{lemma}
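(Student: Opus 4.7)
The plan is to establish the four parts in order: (a) is essentially unpacking definitions, (b) and (c) are linear-algebraic computations drawing on Wielandt-Hoffman (Theorem~\ref{thm:hw}) and Sherman-Morrison (Theorem~\ref{thm:sm}), and (d) is the main probabilistic step, which applies the multivariate Gaussian tail bound Theorem~\ref{thm:multiv-tail}.

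For (a), the bounds $0\le \eta_{k\ell}\le \eta$ are immediate from the overlap constraint defining $\mathcal{F}(m,\xi,\eta)$ in~\eqref{eq:overlap-set}: $n^{-1}\ip{\bs^{(k)}}{\bs^{(\ell)}}=\xi-\eta_{k\ell}\in[\xi-\eta,\xi]$ forces $\eta_{k\ell}\in[0,\eta]$. The diagonal $(\Sigma(\boldsymbol{\eta}))_{ii}=1$ follows from $\sqrt{n}H(\bs^{(i)})\sim \cN(0,1)$, and the off-diagonal formula $(\Sigma(\boldsymbol{\eta}))_{k\ell}=(\xi-\eta_{k\ell})^p$ is~\eqref{eq:COV-EQ} specialized to $\tau_k=\tau_\ell=0$. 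For (b), I would bound the entrywise perturbation by
\[
|E_{k\ell}|=\bigl|(\xi-\eta_{k\ell})^p-\xi^p\bigr|=\int_{\xi-\eta_{k\ell}}^{\xi}pt^{p-1}\,dt\le p\xi^{p-1}\eta,
\]
giving $\|E\|_F\le \sqrt{m(m-1)}\,p\xi^{p-1}\eta$. Since $\Sigma$ has eigenvalue $1-\xi^p$ on $\boldsymbol{1}^\perp$ and $1+(m-1)\xi^p$ along $\boldsymbol{1}$, one has $\lambda_{\min}(\Sigma)=1-\xi^p$; Wielandt-Hoffman then yields $\lambda_{\min}(\Sigma(\boldsymbol{\eta}))\ge 1-\xi^p-\|E\|_F$, so positive definiteness holds under the stated hypothesis $\eta<(1-\xi^p)/(mp\xi^{p-1})$. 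Part (c) is an application of Sherman-Morrison with $A=(1-\xi^p)I$ and $uv^T=\xi^p\boldsymbol{1}\boldsymbol{1}^T$: one computes $v^TA^{-1}u=m\xi^p/(1-\xi^p)$ and hence $1+v^TA^{-1}u=(1+(m-1)\xi^p)/(1-\xi^p)$, from which the stated formula for $\Sigma^{-1}$ follows by direct substitution; the determinant is read off from the same eigen-decomposition.

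For the main step (d), set $\boldsymbol{X}=(\sqrt{n}H(\bs^{(i)}))_{i\le m}\sim \cN(\boldsymbol{0},\Sigma(\boldsymbol{\eta}))$ and $\boldsymbol{t}=\gamma\sqrt{2n\ln 2}\,\boldsymbol{1}$, so that $\mathbb{P}[\min_i H(\bs^{(i)})\ge \gamma\sqrt{2\ln 2}]=\mathbb{P}[\boldsymbol{X}\ge \boldsymbol{t}]$. Before invoking Theorem~\ref{thm:multiv-tail}, I must verify $\Sigma(\boldsymbol{\eta})^{-1}\boldsymbol{t}>\boldsymbol{0}$ entrywise; from (c), $\Sigma^{-1}\boldsymbol{1}=(1+(m-1)\xi^p)^{-1}\boldsymbol{1}>\boldsymbol{0}$, and this propagates to $\Sigma(\boldsymbol{\eta})^{-1}\boldsymbol{1}>\boldsymbol{0}$ by continuity of the matrix inverse on the positive definite cone, provided $\eta$ is small. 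Theorem~\ref{thm:multiv-tail} then bounds $\mathbb{P}[\boldsymbol{X}\ge \boldsymbol{t}]$ by an explicit combination of the Gaussian density $\varphi_{\boldsymbol{X}}(\boldsymbol{t})=(2\pi)^{-m/2}|\Sigma(\boldsymbol{\eta})|^{-1/2}\exp(-\boldsymbol{t}^T\Sigma(\boldsymbol{\eta})^{-1}\boldsymbol{t}/2)$ with the coordinates $\ip{e_i}{\Sigma(\boldsymbol{\eta})^{-1}\boldsymbol{t}}$; substituting $\boldsymbol{t}=\gamma\sqrt{2n\ln 2}\boldsymbol{1}$, collecting the resulting polynomial-in-$n$ factors, and rewriting $\exp(-\gamma^2 n \ln 2\cdot c)=\exp_2(-\gamma^2 n\cdot c)$ produces the stated bound. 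The principal obstacle is the sign condition $\Sigma(\boldsymbol{\eta})^{-1}\boldsymbol{1}>\boldsymbol{0}$, which requires controlling the perturbation $E$ uniformly in $\boldsymbol{\eta}$; this reduces to the operator norm bound already implicit in (b) together with the explicit formula for $\Sigma^{-1}$ from (c).
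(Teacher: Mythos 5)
Your proof follows essentially the same route as the paper's: part (a) is definitional together with the covariance computation, part (b) bounds the entries of $E$ by $p\eta\xi^{p-1}$ (you via the integral $\int_{\xi-\eta_{k\ell}}^{\xi}pt^{p-1}\,dt$, the paper via the factorization $\xi^p-x^p=(\xi-x)\sum_{i<p}\xi^{p-1-i}x^i$, which are the same estimate) and then invokes Wielandt--Hoffman to lower-bound $\lambda_{\min}(\Sigma(\boldsymbol{\eta}))$, part (c) is Sherman--Morrison with a slightly different choice of rank-one decomposition (you factor out $(1-\xi^p)$ from $A$, the paper takes $A=I$ and $u=v=\sqrt{\xi^p/(1-\xi^p)}\boldsymbol{1}$), and part (d) applies Theorem~\ref{thm:multiv-tail} after establishing $\Sigma(\boldsymbol{\eta})^{-1}\boldsymbol{1}>\boldsymbol{0}$ by continuity from the explicit $\Sigma^{-1}\boldsymbol{1}=(1+(m-1)\xi^p)^{-1}\boldsymbol{1}$, exactly as the paper does. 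The argument is correct and matches the paper's proof step for step.
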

\begin{proof}[Proof of Lemma~\ref{lemma:prob}]
\begin{itemize}
    \item[(a)] As $(\bs^{(i)}:1\le i\le m)\in\mathcal{F}(m,\xi,\eta)$, we have $\xi-\eta\le n^{-1}\ip{\bs^{(i)}}{\bs^{(j)}}\le \xi$, yielding $\eta_{ij}\ge 0$ and $\|\boldsymbol{\eta}\|_\infty\le \eta$. Clearly $\sqrt{n}H(\bs^{(i)})\sim \cN(0,1)$ so that $\Sigma(\boldsymbol{\eta})_{ii}=1,\forall i$.  The expression for $(\Sigma(\boldsymbol{\eta}))_{k\ell}$ follows immediately from~\eqref{eq:COV-EQ} by taking $\tau_k=\tau_\ell=0$.
    \item[(b)] The expression $\Sigma(\boldsymbol{\eta})=\Sigma+E$ for $\Sigma$ in~\eqref{eq:sigma} is clear. We next observe that for any $x\in[\xi-\eta,\xi]$, 
\begin{equation}\label{eq:diff}
\xi^p - x^p = (\xi-x)\sum_{0\le i\le p-1}\xi^{p-1-i}x^i\le p\eta \xi^{p-1}.
\end{equation}
Using~\eqref{eq:diff}, $E_{ii}=0$ and $0\le E_{k\ell}\le p\eta\xi^{p-1}$ for $1\le k<\ell\le m$. In particular, $\|E\|_2\le \|E\|_F\le mp\eta \xi^{p-1}$. Noting that the smallest eigenvalue of $\Sigma$ is $1-\xi^p$, the result follows from Theorem~\ref{thm:hw}.
\item[(c)] Noting that the eigenvalues of $\boldsymbol{1}\boldsymbol{1}^T$ are $m$ with multiplicity 1 and $0$ with multiplicity $m-1$, the expression for $|\Sigma|$ follows. 

For $\Sigma^{-1}$, we apply Sherman-Morrison formula, Theorem~\ref{thm:sm}, with
\[
A=I_{m\times m},\quad u=v=\sqrt{\frac{\xi^p}{1-\xi^p}}\boldsymbol{1}\in\R^{m\times 1}.
\]
Check that 
\[
1+v^T A^{-1}u = 1 + v^Tv= 1+\frac{m\xi^p}{1-\xi^p}= \frac{1+(m-1)\xi^p}{1-\xi^p}\ne 0.
\]
as $\boldsymbol{1}^T\boldsymbol{1} = m$. Finally, 
\begin{align*}
    \Sigma^{-1} &= \frac{1}{1-\xi^p}\left(I-\frac{\xi^p}{1-\xi^p}\boldsymbol{1}\boldsymbol{1}^T\right)^{-1} \\
    &=\frac{1}{1-\xi^p}I -\frac{1}{1-\xi^p} \frac{\frac{\xi^p}{1-\xi^p} \boldsymbol{1}\boldsymbol{1}^T}{1+\frac{m\xi^p}{1-\xi^p}} \\
    &=\frac{I}{1-\xi^p} - \frac{\xi^p}{(1-\xi^p)\left(1+(m-1)\xi^p\right)}\boldsymbol{1}\boldsymbol{1}^T.
\end{align*}
\item[(d)] We apply Theorem~\ref{thm:multiv-tail} with $\boldsymbol{t} = (\gamma\sqrt{2n\cdot \ln 2})\boldsymbol{1}\in\R^m$. Before doing so, we have to ensure $\bigl(\Sigma(\boldsymbol{\eta})^{-1}\boldsymbol{t}\bigr)_i>0$ for $1\le i\le m$. Note that for any $i\le m$, 
the map $\boldsymbol{\eta}\to(\Sigma(\boldsymbol{\eta})^{-1}\boldsymbol{t})_i$ is continuous and $\boldsymbol{\eta}\in[0,\eta]^{m(m-1)/2}$ belongs to a compact domain. Thus, it suffices to verify that entrywise
\[
\Sigma^{-1}\boldsymbol{t}>0\iff \Sigma^{-1}\boldsymbol{1}>0.
\]
We recall $\Sigma^{-1}$ from part ${\rm (b)}$. Noting that $\boldsymbol{1}^T\boldsymbol{1}=m$, we get
\begin{align*}
    \Sigma^{-1}\boldsymbol{1} &= \left(\frac{1}{1-\xi^p}-\frac{m\xi^p}{(1-\xi^p)\left(1+(m-1)\xi^p\right)}\right)\boldsymbol{1}\\
    &=\frac{1}{1-\xi^p+m\xi^p}\boldsymbol{1},
\end{align*}
which is clearly entrywise positive. Finally, since
\[
\exp\left(-\frac{\boldsymbol{t}^T\Sigma(\boldsymbol{\eta})^{-1}\boldsymbol{t}}{2}\right) = \exp\left(-\frac{\gamma^2 2n \ln 2}{2} \boldsymbol{1}^T\Sigma(\boldsymbol{\eta})^{-1}\boldsymbol{1}\right) = \exp_2\left(-\gamma^2\cdot n\cdot \boldsymbol{1}^T\Sigma(\boldsymbol{\eta})^{-1}\boldsymbol{1}\right)
\]
we conclude the proof after some algebraic manipulations.
\end{itemize}
\end{proof}
\paragraph{Upper Bounding Probability Terms.} Let the eigenvalues of the PD matrix $\Sigma(\boldsymbol{\eta})$ be $0<\lambda_1\le\cdots\le\lambda_m$. Using Theorem~\ref{thm:hw}, Lemma~\ref{lemma:prob}${\rm (b)}$, and the fact that $\eta<\xi$, we have
\[
\bigl|\lambda_1-(1-\xi^p)\bigr|\le \|E\|_F\le mp\eta\xi^{p-1}\le mp\xi^{p}\implies \lambda_1\ge 1-2mp\xi^p.
\]
The eigenvalues of $\Sigma(\boldsymbol{\eta})^{-1}$ are $\lambda_1^{-1},\dots,\lambda_m^{-1}$. Consequently,
\begin{equation}\label{eq:det}
\bigl|\Sigma(\boldsymbol{\eta})\bigr|^{-\frac12} = \prod_{1\le i\le m}\lambda_i^{-\frac12}\le \bigl(1-2mp\xi^p\bigr)^{-\frac{m}{2}} = O_n(1).
\end{equation}
Next, using the Cauchy-Schwarz inequality and the fact $\|\boldsymbol{1}\|_2=\sqrt{m}$, we obtain
\[
\ip{e_i}{\Sigma(\boldsymbol{\eta})^{-1}\boldsymbol{1}} \le \|e_i\|_2 \cdot \|\Sigma(\boldsymbol{\eta})^{-1}\boldsymbol{1}\|_2\le \frac{\sqrt{m}}{1-2mp\xi^p}. 
\]
Hence
\begin{equation}\label{eq:prod-coordinate}
    \prod_{i\le m}\ip{e_i}{\Sigma(\boldsymbol{\eta})^{-1}\boldsymbol{1}}\le m^{\frac{m}{2}}\bigl(1-2mp\xi^p\bigr)^{-m}=O_n(1).
\end{equation}
We finally control $\boldsymbol{1}^T\Sigma(\boldsymbol{\eta})^{-1}\boldsymbol{1}$. Using Wielandt-Hoffman inequality and Lemma~\ref{lemma:prob}${\rm (b)}$,  
\[
\Bigl|\lambda_m - \bigl(1+(m-1)\xi^p\bigr)\Bigr|\le \|E\|_F \le mp\eta\xi^{p-1}\implies \lambda_m\le 1+2mp\xi^{p}.
\]
Diagonalize now $\Sigma(\boldsymbol{\eta})$ as 
\[
\Sigma(\boldsymbol{\eta})^{-1}=Q(\boldsymbol{\eta})^T\Lambda Q(\boldsymbol{\eta}),
\]
where
\[
Q(\boldsymbol{\eta})\in\R^{m\times m}\quad\text{with}\quad Q(\boldsymbol{\eta})^T Q(\boldsymbol{\eta}) = Q(\boldsymbol{\eta})Q(\boldsymbol{\eta})^T =I_m
\]and \[
\Lambda={\rm diag}\bigl(\lambda_i^{-1}:1\le i\le m\bigr)\in\R^{m\times m}
\]
is a diagonal matrix. Letting $Q(\boldsymbol{\eta})\boldsymbol{1}\triangleq\bigl(\zeta_i(\boldsymbol{\eta}):i\le m\bigr)$, we obtain
\begin{equation}\label{eq:integrand}
    \boldsymbol{1}^T \Sigma(\boldsymbol{\eta})^{-1}\boldsymbol{1} = \sum_{1\le i\le m}\lambda_i^{-1}\zeta_i(\boldsymbol{\eta})^2\ge \frac{1}{1+2mp\xi^{p}}\|Q(\boldsymbol{\eta})\boldsymbol{1}\|_2^2=\frac{m}{1+2mp\xi^{p}}.
\end{equation}
We now have all necessary ingredients for controlling $\mathbb{E}[M]$.
\paragraph{Estimating the Expectation.} Assume $0<\eta<\xi<1$ and $m\in\mathbb{N}$, all of which are fixed as $n\to\infty$. (We will tune $\eta$ eventually.) We have 
\begin{align}
\mathbb{E}[M] &\le \exp_2\left(n+n(m-1)h\left(\frac{1-\xi+\eta}{2}\right)+O(\log_2 n)\right) \times n^{\frac{m}{2}} \left(\gamma\sqrt{\frac{\ln 2}{\pi}}\right)^m\times 2^{cmn}\nonumber\\
& \sup_{\boldsymbol{\eta}:\|\boldsymbol{\eta}\|_\infty\le \eta}\left(\bigl|\Sigma(\boldsymbol{\eta})\bigr|^{-\frac12}\prod_{i\le m}\ip{e_i}{\Sigma(\boldsymbol{\eta})^{-1}\boldsymbol{1}}\right)\cdot \sup_{\boldsymbol{\eta}:\|\boldsymbol{\eta}\|_\infty\le \eta}\exp_2\Bigl(-\gamma^2 n \cdot \boldsymbol{1}^T\Sigma(\boldsymbol{\eta})^{-1}\boldsymbol{1}\Bigr)\label{eq:up-bd1}\\
&\le \exp_2\left(n+nmh\left(\frac{1-\xi+\eta}{2}\right)-n\frac{m\gamma^2}{1+2mp\xi^p}+cmn+O(\log_2 n)\right)\label{eq:up-bd2}.
\end{align}
Here,~\eqref{eq:up-bd1} follows by combining the counting bound per Lemma~\ref{lemma:counting}, probability term arising from Lemma~\ref{lemma:prob}${\rm (d)}$,~\eqref{eq:suffices-to-consider-tau-0}, as well as a union bound over all $\tau_1,\dots,\tau_i\in\mathcal{I}$ (recall $|\mathcal{I}|\le 2^{cn}$). Next,~\eqref{eq:up-bd2} follows by upper bounding~\eqref{eq:up-bd1} further via~\eqref{eq:det},~\eqref{eq:prod-coordinate} and~\eqref{eq:integrand}. Hence,
\begin{align*}
    \mathbb{E}[M]\le \exp_2\Bigl(n\Psi(m,\xi,\eta,p,c)+O(\log_2 n)\Bigr),
\end{align*}
where 
\begin{equation}\label{eq:free-energy}
    \Psi(m,\xi,\eta,p,c) = 1+mh\left(\frac{1-\xi+\eta}{2}\right) - \frac{m\gamma^2}{1+2mp\xi^p}+cm.
\end{equation}
\paragraph{Making $\Psi$ Negative.}
Recall that $\gamma>1/\sqrt{m}$, hence
$\delta\triangleq m\gamma^2-1>0$. Choosing $\xi$ sufficiently close to 1 and $\eta$ small enough (while retaining $\eta<\xi$), we first ensure
\[
mh\left(\frac{1-\xi+\eta}{2}\right) \le \frac{\delta}{4}.
\]
Note that for fixed $m\in\mathbb{N}$ and $\xi<1$,  $1+2mp\xi^{p}\to 1$ as $p\to\infty$. Choose $P^*$ such that
\[
\frac{m\gamma^2}{1+2mp\xi^{p-1}}\ge 1+\frac{\delta}{2}
\]
for all $p\ge P^*$. Lastly, choose $c\le \frac{\delta}{8m}$. With $0<\eta<\xi<1$ as above and $p\ge P^*$, we thus have
\[
\Psi(m,\xi,\eta,p)\le -\frac{\delta}{8}<0,
\]
from which $\mathbb{P}[M\ge 1]\le \exp\bigl(-\Theta(n)\bigr)$ per~\eqref{eq:markovvvv}.
\subsubsection*{Acknowledgments}
The first author is supported in part by NSF grant DMS-2015517.  The second author acknowledges the support of the Natural Sciences and Engineering Research Council of Canada (NSERC) and the Canada Research Chairs programme. La recherche du deuxi\`eme auteur a \'et\'e enterprise gr\^ace, en partie, au 
soutien financier du Conseil de Recherches en Sciences Naturelles et en G\'enie du Canada (CRSNG),  [RGPIN-2020-04597, DGECR-2020-00199], et du Programme des chaires de recherche du Canada. The third author is supported by a Columbia University, Distinguished Postdoctoral Fellowship in Statistics.
\bibliographystyle{amsalpha}
\bibliography{bibliography}

\end{document}